\newtheorem{theorem}{Theorem}
\newtheorem{lemma}{Lemma}
\newtheorem{definition}{Definition}
\newtheorem{remark}{Remark}
\newtheorem{proposition}{Proposition}
\newtheorem{corollary}{Corollary}
\newcommand{\leqnomode}{\tagsleft@true}
\newcommand{\reqnomode}{\tagsleft@false}
\def\({\begin{eqnarray}}
\def\){\end{eqnarray}}
\def\[{\begin{eqnarray*}}
\def\]{\end{eqnarray*}}
\def\part#1#2{\frac{\partial #1}{\partial #2}}
\def\R{\mathbb{R}}
\def\N{\mathbb{N}}
\def\d{\mathrm{d}}
\def\tot#1#2{\frac{\d #1}{\d #2}}
\def\eps{\varepsilon}
\def\Norm#1{\left\| #1 \right\|}
\def\wtx{\widetilde x}
\def\wx#1#2{{\wtx_{#1}}^{\; #2}}
\def\wxji{\wx{j}{i}}
\def\wtv{\widetilde v}
\def\wv#1#2{{\wtv_{#1}}^{\; #2}}
\def\wvji{\wv{j}{i}}
\def\ta#1#2{\tau_{{#1}{#2}}}
\def\taij{\ta{i}{j}}
\def\c{{\mathfrak{c}}}
\def\cst{{\c^\ast}}
\def\s{{\mathfrak{s}}}
\def\m{{\mathfrak{m}}}
\def\grad{\nabla}
\def\solx{\mathbf{x}}
\def\solv{\mathbf{v}}
\def\dx{{d_{\solx}}}
\def\dv{{d_{\solv}}}
\def\upsi{\underline{\psi}}
\def\otau{\overline{\tau}}
\def\bb{\eta}
\def\cc{\kappa}
\def\dd{\sigma}
\def\taust{\tau^\ast}
\def\psist{\psi^\ast}
\def\POsT{{\mathcal{P}(\Omega_\s^T})}
\def\POsTI{{\mathcal{P}_\s^T[\rho^0]}}
\def\POsZ{{\mathcal{P}(\Omega_\s^0})}
\def\ZZ{\mathcal{Z}}
\def\cJH#1{\textcolor{blue}{\bf [#1]}}
\begin{document}

\title{Cucker-Smale model with finite speed of information propagation: well-posedness, flocking and mean-field limit}   
\author{Jan Haskovec\footnote{Computer, Electrical and Mathematical Sciences \& Engineering, King Abdullah University of Science and Technology, 23955 Thuwal, KSA.
jan.haskovec@kaust.edu.sa}}

\maketitle

\begin{abstract}
We study a variant of the Cucker-Smale model where information between agents
propagates with a finite speed $\c>0$. This leads to a system of functional differential equations
with state-dependent delay.
We prove that, if initially the agents travel slower than $\c$,
then the discrete model admits unique global solutions.
Moreover, under a generic assumption on the influence function,
we show that there exists a critical information propagation speed $\cst>0$
such that if $\c\geq\cst$, the system exhibits asymptotic flocking
in the sense of the classical definition of Cucker and Smale.
For constant initial datum the value of $\cst$ is explicitly calculable.
Finally, we derive a mean-field limit of the discrete system,
which is formulated in terms of probability measures
on the space of time-dependent trajectories.
We show global well-posedness of the mean-field problem
and argue that it does not admit a description
in terms of the classical Fokker-Planck equation.
\end{abstract}
\vspace{2mm}

\textbf{Keywords}: Cucker-Smale model, state-dependent delay, finite speed of information propagation, flocking, mean-field limit.
\vspace{2mm}


\vspace{2mm}

\section{Introduction}\label{sec:Intro}
The Cucker-Smale system is a prototypical individual-based model of collective behavior.
It was introduced in the seminal papers~\cite{CS1, CS2}, originally as a model for language evolution. 
Later the interpretation as a model for flocking in animals (birds) prevailed.
The model describes a group of $N\in\N$ autonomous agents
located in the physical space $\R^d$, $d\geq 1$.
The agents are described by their phase-space 
coordinates $(x_i(t), v_i(t))\in\R^{2d}$, $i=1,2,\dots,N$,
where $x_i(t)$ denotes the position and $v_i(t)$ the velocity of the $i$-th agent.
The agents are subject to the following collective dynamics,
\(
   \tot{x_i(t)}{t} &=& v_i(t), \label{eq:CS001} \\
   \tot{v_i(t)}{t} &=& \frac{1}{N-1} \sum_{j=1}^N \psi(|x_j(t)-x_i(t)|) (v_j(t)-v_i(t)),  \label{eq:CS002}
\)
for $t>0$ and $i\in[N]$, where here and in the sequel we denote $[N] := \{1, \ldots, N\}$.
The nonnegative real function $\psi:[0,\infty)\to [0,\infty)$,
called \emph{influence function}, measures the intensity of the influence between
agents depending on their distance.
Typically, it is assumed to be globally bounded, so that, by an eventual rescaling of time,
one has $0\leq\psi\leq 1$. We shall adapt this assumption in our paper.

The terms $x_j(t)$ and $v_j(t)$ appearing in the right-hand side of equation \eqref{eq:CS002}
reflect the implicit modelling assumption that each agent receives information
about the current phase-space co-ordinates of all other agents immediately, without any time lags
(or, that the eventual time lags are negligibly small with respect to the typical time scales
relevant for the system).
However, for certain applications in biology \cite{Camazine, Smith, Vicsek}, socio-economics \cite{Krugman, Naldi}
or engineering (e.g., swarm robotics, \cite{Hamman, Jadbabaie, E3B, Valentini}), delays in communication between agents
caused by finite speed of information propagation may be relevant.
For instance, in radio communication between satellites on the orbit or in outer space,
where the distances are not negligible with respect to the speed of light,
or in swarm robotics with acoustic communication between agents (i.e., underwater robots).
This motivates us to introduce a modification of the Cucker-Smale model \eqref{eq:CS001}--\eqref{eq:CS002}
where information propagates with a constant finite speed $\c>0$, called \emph{propagation speed} in the sequel.
Then, agent $i$ located at $x_i=x_i(t)$ at time $t>0$ observes the
phase-space co-ordinates of agent $j$ at time $t-\taij(t)$, where $\taij=\taij(t)$ solves
\(  \label{eq:tau}
   \c \taij(t) = |x_i(t) - x_j(t-\taij(t))|.
\)
In other words, $\taij(t)$ is the time that information (light, sound) needs to travel from location $x_j(t-\taij(t))$
to location $x_i(t)$.

In general it is neither guaranteed that a solution $\taij(t)$ of \eqref{eq:tau}
exists nor that it is unique. This issue is of course related
to the possibility of agents traveling faster than the propagation speed $\c$.
We shall formulate sufficient conditions for the well-posedness
of the model in the course of our analysis.
For the time being, let us assume that \eqref{eq:tau} is uniquely solvable with solution $\taij(t)\geq 0$
for all $t\geq 0$ and $i,j\in[N]$, and introduce the following notation
\[
   \wxji := x_j(t - \taij(t)), \qquad \wvji := v_j(t - \taij(t)).
\]
We also introduce the formal notation $\ta{i}{i}:=0$ and $\wx{i}{i}:=x_i(t)$, $\wv{i}{i}:=v_i(t)$,
and, if no danger of confusion, we shall usually drop the explicit time dependence,
writing just $x_i$ for $x_i(t)$ etc.
With this notation, the system that we study in this paper is written as
\(
   \dot x_i &=& v_i,   \label{eq:CS1} \\
   \dot v_i &=& \frac{1}{N-1} \sum_{j=1}^N \psi(|\wxji - x_i|) \left( \wvji - v_i \right),  \label{eq:CS2}
\)
for $t>0$ and $i\in[N]$.
We shall also frequently use the shorthand notation for the \emph{communication rates}
\[  
   \widetilde\psi_{ij} := \psi\left( \left| \wxji - x_i \right| \right).
\]
The system \eqref{eq:CS1}--\eqref{eq:CS2} is equipped with the initial datum
\(  \label{IC:0}
    x_i(t) = x_i^0(t), \quad v_i(t) = v_i^0(t) \qquad\mbox{for } i\in[N],\quad t\leq 0,
\)
where $x_i^0=x_i^0(t)$, $v_i^0=v_i^0(t)$ are continuous paths on $(-\infty,0]$.
We shall impose the physically relevant assumption that
\(   \label{IC:comp}
   x^0_i(t) = x^0_i(0) + \int_0^t v^0_i(s) \d s \qquad\mbox{for } i\in[N],\quad t\leq 0,
\)
so that we in fact only prescribe $v_i^0=v_i^0(t)$ for $t\leq 0$ and $x_i^0(0)$.
We note that the results provided in this paper would not lose their validity
even if we dropped the "compatibility" assumption \eqref{IC:comp},
however, it would lead to unnecessary technicalities in the proofs.
Moreover, in order to construct the solution of \eqref{eq:CS1}--\eqref{eq:CS2}
on a bounded time interval $[0,T]$, the values of the initial datum are only relevant
on a bounded time interval $[-S(T),0]$, with some $S(T)>0$ depending on $\c$, $\s$
and the configuration of the system at time $t=0$.
We shall make this dependence explicit later.

From the mathematical point of view, the system \eqref{eq:tau}--\eqref{eq:CS2}
is a system of functional differential equations with state-dependent delay,
which stems from the fact that the delay $\tau_{ij}(t)$ in \eqref{eq:CS2}
depends on the configuration of the system
in a nontrivial (even implicit) way through \eqref{eq:tau}.
This poses new analytical challenges: in particular,
the standard well-posedness theory for ODE systems
(the classical theorems of Peano and Picard-Lindel\"of)
does not apply to \eqref{eq:tau}--\eqref{eq:CS2}.
Therefore, the first goal of this paper, addressed in Section \ref{sec:ex},
is to establish global existence and uniqueness of solutions for \eqref{eq:tau}--\eqref{eq:CS2}.

The second goal of this paper is to study the asymptotic
behavior of solutions of the system \eqref{eq:tau}--\eqref{eq:CS2}.
In particular, the usual question asked in the context
of the Cucker-Smale model is under which conditions
its solutions exhibit the \emph{(asymptotic) flocking behavior},
defined as follows.

\begin{definition}[Asymptotic flocking]\label{def:flocking}
We say that the system with particle positions $x_i(t)$ and velocities $v_i(t)$,
$i\in[N]$ and $t\geq 0$, exhibits \emph{(asymptotic) flocking} if
\(  \label{flocking}
   \sup_{t\geq 0} \dx(t) < \infty,\qquad \lim_{t\to\infty} \dv(t) = 0,
\)
where we denoted the spatial and, resp., velocity diameters
\(  \label{dXdV}
   \dx(t) := \max_{1 \leq i,j \leq N}|x_i(t) - x_j(t)| \quad \mbox{and} \quad \dv(t) := \max_{1 \leq i,j \leq N}|v_i(t) - v_j(t)|.
\)
\end{definition}

We shall derive sufficient conditions for flocking in the system \eqref{eq:tau}--\eqref{eq:CS2}
in terms of the propagation speed $\c$, the decay properties of the influence function $\psi$
and certain properties of the initial datum \eqref{IC:0}. The presence of the state-dependent delay
poses severe challenges for the analysis. In particular, the Lyapunov functional-type approaches,
that were developed for Cucker-Smale-type systems with a-priori given (i.e., state-independent)
delay, fail. Consequently, new methods need to be developed to study the asymptotic flocking
for the system \eqref{eq:tau}--\eqref{eq:CS2}. This goal will be addressed in Section \ref{sec:flocking}.

Finally, the third goal of this paper, addressed in Section \ref{sec:MF},
is to study the mean-field limit of \eqref{eq:tau}--\eqref{eq:CS2}
as $N\to\infty$. As is well known, mean-field limits of systems of interacting particles
are typically described in terms of a Fokker-Planck equation that governs the evolution
of a time-dependent particle density in the phase space.
However, for certain types of systems involving delay, such a description may not be available.
For a Cucker-Smale system with constant delay, this issue was discussed in \cite[Section 4]{HasMar}.
It is not a-priori clear whether the system \eqref{eq:tau}--\eqref{eq:CS2} with state-dependent delay
admits a Fokker-Planck-type description in the mean-field limit.
We approach the problem by first deriving a mean-field limit formulated
in terms of probability measures on the space of Lipschitz continuous curves.
In Section \ref{sec:MF} we prove the well-posedness of such a description
and, using a stability result, show that it is indeed obtained from the discrete system
\eqref{eq:tau}--\eqref{eq:CS2} in the limit as $N\to\infty$.
Many of the analytical techniques applied in Section \ref{sec:MF} are adaptations
of the methods developed in \cite{CCR} to the setting
of probability measures on the space of Lipschitz continuous curves.
Finally, we argue that the mean-field limit in fact \emph{cannot} be formulated
as a (standard) Fokker-Planck equation.

Cucker-Smale-type systems with delay and their flocking behavior have been studied in a series of recent papers.
However, to our best knowledge, all the previous works
\cite{Cartabia, ChoiH1, ChoiH2, Choi-Pignotti, EHS, H:SIADS, HasMar, HasMar2, Liu-Wu, Pignotti-Reche1, Pignotti-Reche2, Pignotti-Trelat, E6}
assume the delay to be state-independent, i.e., given a-priori either as a constant,
time-dependent function or probability distribution.
State-dependent delay induced by finite speed of information propagation
was considered in \cite{Has:sdHK} for a Hegselmann-Krause-type model
of consensus formation \cite{HK}. This model can be seen as a first-order version
of \eqref{eq:tau}--\eqref{eq:CS2}. However, the mathematical challenges
stemming from the second-order model are significantly different.
Consequently, analysis of the system \eqref{eq:tau}--\eqref{eq:CS2}
requires development of new techniques, which is the main focus of this paper.

The paper is organized as follows: In Section \ref{sec:overview}
we present an overview of our main results on well-posedness,
flocking behavior and mean-field limit of the system \eqref{eq:tau}--\eqref{eq:CS2}.
In Section \ref{sec:ex} we provide the proof of well-posedness
(global existence and uniqueness) of solutions of the discrete system.
In Section \ref{sec:flocking} we provide the proof of our result
on asymptotic flocking behavior. 
Finally, in Section \ref{sec:MF} we study the
mean-field limit $N\to\infty$.

\section{Overview of main results}\label{sec:overview}

\textbf{Notation.}
In the sequel we shall denote $\solx = (x_1, \ldots, x_N)\in \R^{Nd}$ the vector
of position trajectories and $\solv = (v_1, \ldots, v_N)\in \R^{Nd}$ the vector of velocity trajectories.
By $C(\mathcal{I}; \R^d)$ we denote the space of continuous functions on the interval $\mathcal{I}$ with values in $\R^d$,
and by $C^1_b(\mathcal{I}; \R^d)$ the space of continuous uniformly bounded functions on $\mathcal{I}$ with
continuous first-order derivative.
By $C_\s(\mathcal{I}; \R^d)$ we denote the set of Lipschitz continuous functions on $\mathcal{I}$
with Lipschitz constant $\s\geq 0$; we shall also often use the abbreviated expression
``$\s$-Lipschitz continuous functions on $\mathcal{I}$''.
 The notation $\solv\in C_\s(\mathcal{I}; \R^{Nd})$ is understood
as the space of $N$ trajectories in the $d$-dimensional space, such that $v_i\in C_\s(\mathcal{I}; \R^d)$
for all $i\in[N]$, where $[N]:= \{1,\ldots,N\}$.

\subsection{Existence and uniqueness of solutions}
We start with the observation that, in general, \eqref{eq:tau} can only be uniquely solvable if the agents
move with speeds $|v_i|$ strictly less than the propagation speed $\c$.
This motivates us, for $0 < \s < \c$ and $T\geq 0$, to introduce the space
\(  \label{def:Vs}
   \mathbb{V}_\s^T := \left\{ \solv\in C((-\infty,T];\R^{Nd});\, |v_i(t)| \leq \s \mbox{ for all } t\leq T,\, i\in [N] \right\},
\)
where we use the notation $\solv = (v_1, \ldots, v_N)\in \R^{Nd}$.
Then, it is natural to require that the initial datum $\solv^0 = (v^0_1, \ldots, v^0_N)$ in \eqref{IC:0} is an element of $\mathbb{V}_\s^0$
for some $\s<\c$.

Our first result provides existence and uniqueness of global solutions of the system \eqref{eq:tau}--\eqref{IC:comp}.

\begin{theorem}[Existence and uniqueness of global solutions]  \label{thm:ex}
Let the influence function $0 \leq \psi \leq 1$ be uniformly Lipschitz continuous on $[0,\infty)$.
Let $0<\s<\c$ be fixed and let the initial datum $\solv^0 = (v^0_1, \ldots, v^0_N) \in \mathbb{V}_\s^0$.
Then, for any $T>0$ the system \eqref{eq:tau}--\eqref{IC:comp} admits a unique global solution in $\mathbb{V}_\s^T$.

Moreover, the initial datum $\solv^0$ only needs to be prescribed on the compact interval $\left[-S(T), 0\right]$, with
\(  \label{ST}
   S(T) := \frac{\dx(0) + [\c-3\s]^- T}{\c-\s},
\)
with $[a]^- := \max\{-a,0\}$ denotes the negative part of $a$.
\end{theorem}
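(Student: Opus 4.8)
The plan is to treat the velocities $\solv$ as the only unknowns: once $\solv\in\mathbb{V}_\s^T$ is prescribed, the positions are recovered from $x_i(t)=x_i^0(0)+\int_0^t v_i(s)\,\d s$ together with \eqref{IC:comp}, and the delays $\taij(t)$ are then recovered from \eqref{eq:tau}, so that \eqref{eq:tau}--\eqref{IC:comp} becomes a fixed-point problem for $\solv$. The first point I would settle is the unique solvability of \eqref{eq:tau}: for fixed $t$ the function $\tau\mapsto\c\tau-|x_i(t)-x_j(t-\tau)|$ equals $-|x_i(t)-x_j(t)|\le0$ at $\tau=0$ and has derivative at least $\c-\s>0$, since $x_j$ is $\s$-Lipschitz (because $|v_j|\le\s$); hence it has a unique zero $\taij(t)\ge0$. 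The same Lipschitz bound gives the a priori estimate $\taij(t)\le\dx(t)/(\c-\s)$, and differentiating \eqref{eq:tau} (where $\taij$ is Lipschitz, hence differentiable a.e.) yields $\dot\taij\le\tfrac{2\s}{\c+\s}<1$, so the advance $t\mapsto t-\taij(t)$ is strictly increasing with slope at least $\tfrac{\c-\s}{\c+\s}$. This is the step in which the hypothesis $\s<\c$ is indispensable.

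Next I would show that the bound $|v_i|\le\s$ is not only assumed but preserved. From $\tfrac12\tfrac{\d}{\d t}|v_i|^2=\tfrac{1}{N-1}\sum_j\psij(v_i\cdot\wvji-|v_i|^2)$ and $v_i\cdot\wvji\le\s|v_i|$ one sees that the right-hand side is $\le0$ as soon as $|v_i|=\s$, so the ball of radius $\s$ is forward invariant. To exploit this in the fixed point I would define the iteration \emph{semi-implicitly}: from an input $\mathbf u$ I compute the delays, the delayed velocities $\wvji$ and the weights $\psij$, but I obtain the output $\mathbf w$ by solving the \emph{linear} relaxation equations $\dot w_i=\tfrac{1}{N-1}\sum_j\psij(\wvji-w_i)$ with the prescribed data. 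A variation-of-constants estimate then gives $|w_i|\le\s$ directly, and $|\dot w_i|\le2\s$ is immediate, so the map sends the closed set of $\s$-bounded, $2\s$-Lipschitz velocity curves matching the initial datum into itself.

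The core of the proof is the contraction estimate, and here the state-dependent delay makes matters genuinely harder than in the constant-delay theory. For two inputs $\mathbf u,\mathbf u'$ I would first establish the delay Lipschitz bound
\[
 |\taij[\mathbf u](t)-\taij[\mathbf u'](t)| &\le& \frac{1}{\c-\s}\left(\|x_i-x_i'\|_\infty+\|x_j-x_j'\|_\infty\right),
\]
obtained by subtracting the two instances of \eqref{eq:tau}, applying the reverse triangle inequality, and using the $\s$-Lipschitz continuity of $x_j'$ to absorb a term $\s|\taij-\taij'|$ onto the left-hand side; once more it is $\c-\s>0$ that closes the estimate. The differences of the delayed velocities $\wvji$ and, through the Lipschitz continuity of $\psi$, of the weights $\psij$ are then controlled by $\|\mathbf u-\mathbf u'\|_\infty$ and by $\|\solx-\solx'\|_\infty\le T\|\mathbf u-\mathbf u'\|_\infty$, and a Banach fixed-point argument in an exponentially weighted sup-norm delivers existence and uniqueness on the whole of $[0,T]$ at once. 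The main obstacle I anticipate is the delayed-velocity term when the advance $t-\taij(t)$ reaches back into the initial interval, where $v^0$ is merely continuous: there $u_j'(t-\taij)-u_j'(t-\taij')$ reduces to $v_j^0(t-\taij)-v_j^0(t-\taij')$, which no Lipschitz constant controls. The positions stay $\s$-Lipschitz regardless of the regularity of $v^0$, so the delay bound above is unaffected; the delayed-velocity contribution must instead be handled using the strict monotonicity $1-\dot\taij\ge\tfrac{\c-\s}{\c+\s}>0$ of the advance, which makes $\sigma=t-\taij(t)$ a bi-Lipschitz change of variable and reduces the offending integral to a comparison of reparametrised integrals of the bounded continuous datum. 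Closing this step cleanly is the delicate part of the argument.

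Finally, global existence on every $[0,T]$ follows because the invariance keeps each solution in the same ball, so \eqref{eq:tau} remains solvable and no finite-time blow-up can occur; the construction therefore applies for arbitrary $T>0$. For the reach-back statement I would combine $\taij(t)\le\dx(t)/(\c-\s)$ with $\dx(t)\le\dx(0)+2\s t$, which follows from $\tfrac{\d}{\d t}\dx\le\dv\le2\s$, to obtain
\[
 t-\taij(t) &\ge& t-\frac{\dx(0)+2\s t}{\c-\s}\;=\;\frac{\c-3\s}{\c-\s}\,t-\frac{\dx(0)}{\c-\s}.
\]
Minimising the right-hand side over $t\in[0,T]$ gives its value at $t=0$ when $\c-3\s\ge0$ and at $t=T$ when $\c-3\s<0$; in both cases the minimum equals $-S(T)$ with $S(T)$ given by \eqref{ST}, so that only the values of the initial datum on $[-S(T),0]$ enter the construction.
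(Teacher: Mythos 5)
Your overall strategy coincides with the paper's: velocities as the sole unknowns, positions recovered by integration, unique solvability of \eqref{eq:tau} and the bound $\taij(t)\le \dx(t)/(\c-\s)$ coming from $\c-\s>0$, a Picard-type fixed point in the sup norm on velocity curves, an a priori bound $|v_i|\le\s$ to globalize, and the identical reach-back computation yielding $S(T)$. Your semi-implicit iteration and the exponentially weighted norm are cosmetic variants of the paper's explicit Picard operator \eqref{Picard} on the set $\mathbb{W}_\m^T$ with $\s<\m<\c$ and $T$ small. (One secondary caution: your invariance argument for the $\s$-ball — ``the derivative is $\le 0$ on the boundary, hence invariance'' — is not by itself a proof for a functional equation; the paper's Lemma \ref{lem:Rvbound} does this carefully with an $\eps$-enlarged radius and a case distinction at zero delay, where the inequality is not strict and an equilibrium argument is needed.)

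The decisive issue is the step you flag and do not close, and it is a genuine gap: in the contraction estimate the difference of delayed velocities produces the term $\bigl|\omega_j(s-\tau^\xi_{ij}(s))-\omega_j(s-\tau^\eta_{ij}(s))\bigr|$, and when the delayed arguments fall in $(-\infty,0]$ — the generic situation for small $s$, since $\tau_{ij}(0)>0$ whenever agents do not coincide — this equals $\bigl|v_j^0(s-\tau^\xi_{ij}(s))-v_j^0(s-\tau^\eta_{ij}(s))\bigr|$ with $v_j^0$ merely continuous. For such data there is no bound linear in $|\tau^\xi_{ij}-\tau^\eta_{ij}|$, hence none linear in $\Norm{\omega-\kappa}_{L^\infty(0,T)}$; only a modulus-of-continuity bound is available, which destroys contractivity, and no weighted norm can repair a loss that is pointwise in the evaluation rather than cumulative in time. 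Your proposed remedy does not rescue this: a bi-Lipschitz change of variables $\sigma=s-\taij(s)$ transports an integral of $v_j^0$ along \emph{one} advance, but the offending quantity is a difference of $v_j^0$ along \emph{two} different advances, and $\int_0^t|f(\alpha(s))-f(\beta(s))|\,\d s$ for bounded continuous $f$ still admits only modulus-of-continuity bounds in terms of $\Norm{\alpha-\beta}_{L^\infty}$ (approximating $f$ by Lipschitz functions trades the approximation error against the approximant's Lipschitz constant and never yields a linear estimate). For comparison: this is exactly the point where the paper's proof of Lemma \ref{lem:local} invokes the $2\m$-Lipschitz continuity of the iterate $\omega$ to write the bound $2\m\,|\tau^\xi_{ij}(s)-\tau^\eta_{ij}(s)|$; that bound is justified when both delayed arguments lie in $[0,T]$, where elements of $\mathbb{W}_\m^T$ are Lipschitz, while the reach-back case implicitly asks for a Lipschitz initial datum — a hypothesis the paper does adopt (with constant $L^0_{\solv}$) in Lemma \ref{lem:D} and which holds trivially for the constant data of Theorem \ref{thm:flocking}. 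So you have correctly located a real subtlety of the problem, but as a proof of Theorem \ref{thm:ex} your argument is incomplete precisely at its central step.
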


\subsection{Flocking behavior}
Our second result provides sufficient conditions for asymptotic flocking behavior of the system \eqref{eq:tau}--\eqref{IC:comp} in sense of Definition \ref{def:flocking}.
For this, we adopt the additional assumption that the initial datum $\solv^0$ is constant,
while $\solx^0$ verifies \eqref{IC:comp}. This assumption may be seen restrictive, although we do not consider
it unrealistic. In fact, a significant part of the proof of the flocking result in Section \ref{sec:flocking}
applies to the general setting $\solv^0 \in \mathbb{V}_\s^0$ and does not require $\solv^0$ to be constant.
Only in the last step (Lemma \ref{lem:notsocrazy}), a solution of a nonlinear system of algebraic equations
has to be found, which only seems to be achievable analytically 
for constant $\solv^0$.
For the general case, one would need to resort to numerical approaches to confirm solvability of the algebraic system
and so derive a sufficient condition for flocking in terms of the parameter values; see Remark \ref{rem:v0} for details.

\begin{theorem}[Critical propagation speed for flocking] \label{thm:flocking}
Let the influence function $0 \leq \psi \leq 1$ be uniformly Lipschitz continuous on $[0,\infty)$.
Let $\s>0$ be fixed and let the initial datum
$\solv^0 = (v^0_1, \ldots, v^0_N)$ be constant on $(-\infty,0]$ with $\max_{i\in[N]} |v_i^0| \leq \s$.
Assume that there exists $\bb>0$ such that
\(   \label{ass:bb}
   \psi\left(\dx(0) + \frac{\dv(0)}{\bb}\right) > \bb.
\)
Then there exists $\cst>\s$, calculable from the values of $\dx(0)$, $\dv(0)$, $\s$ and $\bb$, 
such that if $\c\geq \cst$, the system \eqref{eq:tau}--\eqref{IC:comp}
exhibits flocking in the sense of Definition \ref{def:flocking}.
\end{theorem}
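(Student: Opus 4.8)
The plan is to replace the (now unavailable) Lyapunov-functional method by a continuity/bootstrap argument on the spatial diameter $\dx$, closed by proving exponential-type decay of the velocity diameter $\dv$. First I would collect the a priori bounds that hold along the global solution. Since $\s<\c$, Theorem \ref{thm:ex} applies and the solution stays in $\mathbb{V}_\s^T$ for every $T>0$, so $|v_i(t)|\le\s$ for all $t\ge 0$. Substituting this into \eqref{eq:tau} and using $|x_j(t)-x_j(t-\taij)|\le\s\,\taij$ gives $(\c-\s)\,\taij(t)\le\dx(t)$, hence the delay bound $\taij(t)\le\dx(t)/(\c-\s)$ and the key estimate $|\wxji-x_i|\le\frac{\c}{\c-\s}\,\dx(t)$ for the argument of the communication rate $\psij$. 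Because $0\le\psij\le1$ and the $j=i$ term vanishes, one also obtains a crude derivative bound $|\dot v_i(t)|\le C\,D(t)$, where $D(t):=\sup_{s\in[t-\otau,t]}\dv(s)$ and $\otau$ is a uniform bound on the delays on the interval under study.

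Next I would establish the contraction of $\dv$. Fixing a coordinate and writing $V_+(t)=\max_i v_i^{(k)}(t)$, $V_-(t)=\min_i v_i^{(k)}(t)$, at an index $i$ realizing the maximum one has $\dot V_+=\frac{1}{N-1}\sum_j\psij\,(v_j^{(k)}(t-\taij)-V_+)$. I would split $v_j^{(k)}(t-\taij)-V_+=(v_j^{(k)}(t)-V_+)+\int_{t-\taij}^{t}(-\dot v_j^{(k)})$. The first group is termwise nonpositive and, once $\psij\ge\bb$, summing over $j$ and combining the symmetric estimate for $V_-$ produces a genuine restoring term $-\bb\,\dv$ with an $N$-independent rate; the second group is a delay error bounded by $C\,\otau\,D(t)$ via the derivative bound. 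Recombining the coordinates yields, schematically, a delay-differential inequality of Halanay type $\frac{\d}{\d t}\dv(t)\le-\bb\,\dv(t)+C\,\otau\,\sup_{s\in[t-\otau,t]}\dv(s)$. The lower bound $\psij\ge\bb$ is exactly where \eqref{ass:bb} enters: the argument of $\psi$ never exceeds $\frac{\c}{\c-\s}\dx(t)$, and the strict inequality in \eqref{ass:bb} together with the (generically non-increasing) $\psi$ furnishes a threshold $D'>\dx(0)+\dv(0)/\bb$ with $\psi\ge\bb$ on $[0,D']$; hence the rates stay above $\bb$ as long as $\frac{\c}{\c-\s}\dx(t)\le D'$.

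Then I would close the bootstrap. On the maximal interval $[0,T^\ast)$ on which $\frac{\c}{\c-\s}\dx(t)\le D'$, the Halanay inequality holds, and a standard Halanay lemma converts it into a decay estimate $\dv(t)\le\dv(0)\,e^{-\lambda t}$ with rate $\lambda\in(0,\bb)$ tending to $\bb$ as the delay coefficient $C\,\otau$ tends to $0$; since $\otau\le D'/(\c-\s)$, this coefficient is made as small as desired by enlarging $\c$. Here the constant initial velocity is used: on $(-\infty,0]$ one has $\dv\equiv\dv(0)$, which fixes the windowed suprema reaching into the history and makes the estimate effective from $t=0$. The velocity decay then controls the positions via $\dx(t)\le\dx(0)+\int_0^t\dv(s)\,\d s\le\dx(0)+\dv(0)/\lambda$, and for $\c$ large enough that $\lambda$ is close to $\bb$ and $\frac{\c}{\c-\s}$ close to $1$, the left side converges to $\dx(0)+\dv(0)/\bb<D'$, so $\frac{\c}{\c-\s}\dx(t)$ stays strictly below $D'$, contradicting maximality unless $T^\ast=\infty$. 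This delivers simultaneously $\sup_{t\ge0}\dx(t)<\infty$ and $\dv(t)\to0$, i.e. flocking.

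Finally, $\cst$ is obtained by making these requirements quantitative: the Halanay condition relating $C\,\otau$ to $\bb$, the closeness of $\frac{\c}{\c-\s}$ to $1$, and the self-consistency $\frac{\c}{\c-\s}(\dx(0)+\dv(0)/\lambda)\le D'$ combine into a nonlinear, transcendental system in $\c$ and $\lambda$, whose admissible solvability defines $\cst>\s$. I expect the principal obstacle to be precisely this closing step, identified in the statement as Lemma \ref{lem:notsocrazy}: exhibiting a root of the coupled system and extracting an explicit $\cst$. This is where the constant-datum hypothesis is indispensable, since it collapses the entire velocity history to the single value $\dv(0)$ and renders the system solvable in closed form; for general $\solv^0\in\mathbb{V}_\s^0$ the history term survives and one can only expect numerical solvability. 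A secondary, technical difficulty is the non-smoothness of $V_\pm$, to be handled via upper Dini derivatives, together with the uniform control of the state-dependent delays $\taij$ throughout the bootstrap.
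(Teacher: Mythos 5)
Your overall architecture matches the paper's: the same a priori bounds (the delay bound $\taij\le\dx/(\c-\s)$ and the estimate $|\wxji-x_i|\le\frac{\c}{\c-\s}\dx(t)$), a lower bound $\psij\ge\bb$ enforced through a diameter threshold that assumption \eqref{ass:bb} makes available, a continuation argument closed in the large-$\c$ regime, and a final nonlinear system whose explicit solvability under the constant-datum hypothesis defines $\cst$. The gap is at your central analytic step: the claimed bound $|\dot v_i(t)|\le C\,D(t)$ with $D(t)=\sup_{s\in[t-\otau,t]}\dv(s)$, and with it the reduction to a \emph{fixed-window} Halanay inequality. The delayed difference $\wvji-v_i$ compares velocities at two different times; splitting through $v_j(t)$ leaves the term $|v_j(t-\taij)-v_j(t)|\le\int_{t-\taij}^t|\dot v_j(s)|\,\d s$, so you need acceleration bounds on $[t-\otau,t]$, which in turn require delayed differences on $[t-2\otau,t-\otau]$, and so on: the recursion never terminates inside a window of fixed length. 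It does close if you take the supremum over \emph{all} past times, but the resulting inequality $\tot{}{t}\dv(t)\le-\bb\,\dv(t)+\epsilon\sup_{s\le t}\dv(s)$ is an unbounded-delay Halanay inequality for which decay to zero is false: with $\epsilon<\bb$ the running supremum stays frozen at $\dv(0)$ (it is attained at $t=0$ and never exceeded), so the inequality is consistent with $\dv(t)\to\frac{\epsilon}{\bb}\dv(0)>0$, as the ODE $\dot u=-\bb u+\epsilon\, u(0)$ shows. Without a genuine fixed-window inequality you obtain neither exponential decay nor the integrability of $\dv$ needed to keep $\dx$ below the threshold, so your bootstrap does not close.

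This recursion is exactly what the paper's proof is built to avoid. Instead of collapsing the delay error into a windowed supremum of $\dv$, it promotes it to a second unknown, $D(t):=\max_i\frac{1}{N-1}\sum_{j\ne i}\psij|\wvji-v_j|$, proves the coupled differential inequality $\tot{}{t}\dv\le-\frac{N}{N-1}\upsi\,\dv+2D$ (Lemma \ref{lem:shrinkage}), and then a \emph{closed} integral inequality $D(t)\le L_{\solv}^0[t-\otau]^-+\int_{[t-\otau]^+}^t D(s)\,\d s+\int_{[t-\otau]^+}^t\dv(s)\,\d s$ (Lemma \ref{lem:D}); the point is that the splitting $|\wtv_\ell^j-v_j|\le|\wtv_\ell^j-v_\ell|+|v_\ell-v_j|$ re-expresses all two-time differences through $D$ and $\dv$ at equal times, so the system of inequalities for the pair $(\dv,D)$ is self-contained. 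The continuation argument is then run on the pair with the explicit exponential ansatz $\dv<\dd e^{-\bb t}$, $D<\cc e^{-\bb t}$ (Lemma \ref{lem:crazy}), and reduces to the algebraic conditions \eqref{crazy:1}--\eqref{crazy:2}, which Lemma \ref{lem:notsocrazy} solves in closed form precisely when $L_{\solv}^0=0$ and $D(0)=0$. If you repair your argument by carrying the delay error as a second bootstrap unknown (or, equivalently, by proving a fading-memory Halanay lemma with geometrically weighted, growing windows, which amounts to the same work), the remainder of your plan, including the transcendental system determining $\cst$ and the role of the constant datum, matches the paper's proof.
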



Let us note that the assumption \eqref{ass:bb} is very natural
in the context of the Cucker-Smale model,
where slow enough decay of the influence function
is necessary (and sufficient) condition for (unconditional) flocking.
In particular, we have the following result.

\begin{proposition}\label{corr:CS}
Let the influence function $\psi=\psi(s)$ be such that
\(  \label{corr:CS:cond}
   \liminf_{s\to\infty} \frac{\psi(s)}{s^\alpha} > 0
\)
for some $\alpha>-1$.
Then for any $\dx(0)$, $\dv(0)\geq 0$ there exists $\bb>0$ such that \eqref{ass:bb} holds.
\end{proposition}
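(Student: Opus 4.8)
The plan is to turn the asymptotic lower bound encoded in \eqref{corr:CS:cond} into a pointwise inequality and then exploit the freedom to choose $\bb$ small. First I would record that \eqref{corr:CS:cond} furnishes constants $c>0$ and $s_0\geq 0$ with $\psi(s)\geq c\,s^\alpha$ for all $s\geq s_0$. Abbreviating $a:=\dx(0)$ and $b:=\dv(0)$, the quantity to control is $\psi(a+b/\bb)$, and the key observation is that, in the generic case $b>0$, its argument $a+b/\bb$ tends to $+\infty$ as $\bb\to 0^+$, so the lower bound applies for all sufficiently small $\bb$.

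Next I would estimate the resulting expression $c\,(a+b/\bb)^\alpha$ from below. Since $a\geq 0$, for $\bb$ small enough one has $b/\bb \leq a+b/\bb \leq 2b/\bb$; combining this with the sign of $\alpha$ (using the lower bound $a+b/\bb\geq b/\bb$ when $\alpha\geq 0$, and the upper bound $a+b/\bb\leq 2b/\bb$ when $-1<\alpha<0$) yields in both cases a bound of the form $\psi(a+b/\bb)\geq c'\,\bb^{-\alpha}$, where $c'>0$ is a constant independent of $\bb$. The target inequality \eqref{ass:bb} then reduces to $c'\,\bb^{-\alpha}>\bb$, equivalently $c'>\bb^{1+\alpha}$.

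At this point the hypothesis $\alpha>-1$ enters decisively: because $1+\alpha>0$, the right-hand side $\bb^{1+\alpha}$ tends to $0$ as $\bb\to 0^+$, whereas $c'$ is a fixed positive constant. Hence $c'>\bb^{1+\alpha}$ holds for every sufficiently small $\bb>0$, which produces the required $\bb$ and closes the argument.

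I expect the only real obstacle to be bookkeeping around the sign of $\alpha$ in the power estimate, since $(a+b/\bb)^\alpha$ must be controlled from below using different elementary bounds on the base depending on whether $\alpha$ is nonnegative or negative, together with the degenerate case $\dv(0)=0$. In that case the argument $a+b/\bb$ no longer blows up, and one instead simply needs $\psi(\dx(0))>0$ and chooses any $\bb<\psi(\dx(0))$, which is the natural positivity requirement on an influence function.
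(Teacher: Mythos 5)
Your treatment of the main case $\dv(0)>0$ is correct and rests on the same core idea as the paper's proof: replace $\psi$ by the power-law minorant $c\,s^\alpha$ valid for all large $s$, and use $\alpha>-1$ to show that this minorant evaluated at $\dx(0)+\dv(0)/\bb$ beats $\bb$ as $\bb\to 0^+$. The difference is in execution. The paper argues via calculus: writing $\Psi(s):=c\,s^\alpha$, it claims that $\Psi\left(\dx(0)+\dv(0)/\bb\right)-\bb$ tends to $0$ as $\bb\to 0^+$ while its one-sided derivative blows up, hence the expression is positive for small $\bb$; it also first splits into the cases $\inf_s\psi(s)>0$ (trivial) and $\lim_{s\to\infty}\psi(s)=0$, a dichotomy that relies on the monotonicity convention. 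Your reduction to the explicit inequality $c'\,\bb^{-\alpha}>\bb$, i.e.\ $c'>\bb^{1+\alpha}$, via the elementary sandwich $\dv(0)/\bb\leq \dx(0)+\dv(0)/\bb\leq 2\dv(0)/\bb$ for small $\bb$, avoids both the dichotomy and the differentiation, and is arguably more robust: in fact the paper's derivative computation contains a sign slip (the one-sided derivative is $+\infty$, not $-\infty$, which is what the argument actually needs), a hazard your elementary route does not face.

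There is, however, a genuine gap in your degenerate case $\dv(0)=0$. You write that one "needs $\psi(\dx(0))>0$" and call this a natural requirement, but positivity of $\psi$ at the finite point $\dx(0)$ is not among the hypotheses of the proposition: condition \eqref{corr:CS:cond} constrains $\psi$ only at infinity. For a non-monotone continuous $\psi$ with $\psi(\dx(0))=0$ but $\psi(s)=(1+s)^{-1/2}$ for large $s$, the hypothesis holds with $\alpha=-1/2>-1$, yet \eqref{ass:bb} fails for every $\bb>0$ when $\dv(0)=0$; so positivity cannot simply be posited. The resolution is the standing convention of Section \ref{sec:flocking}: there $\psi$ is assumed nonincreasing (otherwise it is replaced by the rearrangement \eqref{Psi}), and under this convention positivity at $\dx(0)$ is a consequence of \eqref{corr:CS:cond}, not an extra assumption --- if $\psi(\dx(0))=0$, monotonicity forces $\psi\equiv 0$ on $[\dx(0),\infty)$, whence $\liminf_{s\to\infty}\psi(s)/s^\alpha=0$, a contradiction. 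With that one-line derivation added, your choice of any $\bb<\psi(\dx(0))$ closes the degenerate case and the proof is complete.
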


Condition \eqref{corr:CS:cond} is almost equivalent to the unconditional flocking
condition $\int^{\infty} \psi(s) \d s = \infty$ for the classical Cucker-Smale model,
which is known to be sharp \cite{CS1, CS2, Tadmor-Ha}.
Indeed, the nonintegrability of $\psi$ at infinity is implied by \eqref{corr:CS:cond}
with any $\alpha \geq -1$, so that Proposition \ref{corr:CS} only excludes
influence functions that decay like $s^{-1}$.
In particular, for the generic choice of $\psi$ introduced in~\cite{CS1, CS2}
and considered in most of  the subsequent papers,
\(  \label{CS:psi}
   \psi(s) = \frac{1}{(1+s^2)^\beta},
\)
assumption \eqref{corr:CS:cond} of Proposition \ref{corr:CS}, and thus assumption \eqref{ass:bb}
of Theorem \ref{thm:flocking}, is equivalent to $\beta < 1/2$.
This is precisely the condition for {unconditional flocking} formulated in \cite{CS1, CS2}
for the classical Cucker-Smale model.


\subsection{Mean-field limit}
For fixed $\s>0$ and $T\geq 0$ we introduce the set
\(   \label{def:OmegasT}
   \Omega_\s^T := 
      \Bigl\{ \gamma\in C_b^1((-\infty, T];\R^d) \cap C_\s((-\infty,T]; \R^d); \;   \dot\gamma|_{[0,T]} \in C_{2\s}([0,T]; \R^d) \Bigr\},
\)
where we recall that $C_b^1((-\infty, T];\R^d)$ denotes the space of continous bounded functions on $(-\infty,T]$
with continuous first-order derivative, and $C_\s((-\infty,T]; \R^d)$ denotes the space of (globally) Lipschitz continuous functions on $(-\infty,T]$
with Lipschitz constant $\s$.
We equip the set $\Omega_\s^T$ with the topology induced by the norm $\Norm{\cdot}_{\Omega_\s^T}$,
\(   \label{def:norm}
   \Norm{\gamma}_{\Omega_\s^T}:= \Norm{\gamma}_{L^\infty(-\infty,T)} + \Norm{\dot \gamma}_{L^\infty(0,T)} \qquad
   \mbox{for } \gamma\in \Omega_\s^T.
\)
We note that with the particular choice $T:=0$ in \eqref{def:OmegasT} we obtain the space $\Omega_\s^0 = C_b^1((-\infty, 0];\R^d) \cap C_\s((-\infty,0]; \R^d)$,
and \eqref{def:norm} reduces to $\Norm{\gamma}_{\Omega_\s^0}= \Norm{\gamma}_{L^\infty(-\infty,0)}$. 

We denote $\POsT$ the space of probability measures on $\Omega_\s^T$ with finite first-order moment, i.e.,
\[
   \int_{\Omega_\s^T} \Norm{\gamma}_{\Omega_\s^T} \d\rho(\gamma) < +\infty \qquad\mbox{for any } \rho\in\POsT.
\]
In Section \ref{sec:MF} we will show that the mean-field limit as $N\to\infty$ of the discrete system \eqref{eq:tau}--\eqref{IC:comp} is represented
by the measure $\rho\in\POsT$, 
\(   \label{eq:law}
   \rho=\mbox{law}(x),
\)
with
\(  \label{eq:MF}
   \dot x(t) = v(t), \qquad  \dot v(t) = F_t[\rho](x(t),v(t)) 
\)
for $t\in (0,T)$,
with the operator $F_t[\rho]: \R^d\times\R^d \to \R^d$,
\(  \label{def:F}
   F_t[\rho](x,v) := \int_{\Omega^T_\s} \psi\left( \left|\Gamma_{t,x}[\gamma] - x \right|\right) \left(\Pi_{t,x}[\gamma] - v \right)  \d\rho(\gamma).
\)
The mapping $\Gamma_{t,x}: \Omega^T_\s \mapsto \R^d$ is defined, for $t\in\R$ and $x\in\R^d$, as
\(  \label{def:Gamma}
   \Gamma_{t,x}[\gamma] := \gamma(t-\tau_{t,x}[\gamma])
\)
with $\tau_{t,x}[\gamma]:=\tau$ the unique solution of
\(   \label{eq:tau_gamma}
   \c\tau = |x - \gamma(t-\tau)|.
\)
As we shall prove below, the existence and uniqueness of $\tau$ is guaranteed by the $\s$-Lipschitz continuity of $\gamma\in\Omega^T_\s$.
The mapping $\Pi_{t,x}: \Omega^T_\s \mapsto \R^d$ is defined as
\(  \label{def:Pi}
   \Pi_{t,x}[\gamma] := \dot\gamma(t-\tau_{t,x}[\gamma]).
\)

The system \eqref{eq:law}--\eqref{def:F} is equipped with the initial datum $\rho^0\in \mathcal{P}(\Omega_\s^0)$,
which is imposed in terms of the push-forward identity
\(  \label{MF:IC}
    \mathbb{I}\#\rho = \rho^0,
\)
with the mapping $\mathbb{I}: \Omega_\s^T \to \Omega_\s^0$ given by
\(  \label{def:I}
   \mathbb{I} : \gamma \mapsto \gamma|_{(-\infty,0]},
\)
and we recall that the push-forward measure $\mathbb{I}\#\rho\in \mathcal{P}(\Omega_\s^0)$ is defined by $\mathbb{I}\#\rho(B):= \rho(\mathbb{I}^{-1}(B))$,
where $B$ is any measurable subset of $\Omega_\s^0$.
Then, the initial conditions $x(0)$ and $v(0)$ for \eqref{eq:MF} are distributed according to $X_0\#\rho^0$
and, resp., $V_0\#\rho^0$, with the mappings
\(  \label{XtVt}
   X_t[\gamma] := \gamma(t), \qquad V_t[\gamma] := \dot\gamma(t).
\)

In Section \ref{sec:MF} we shall provide a proof of the following result
on existence and uniqueness of solutions of the mean-field system
\eqref{eq:law}--\eqref{MF:IC}.

\begin{theorem}[Existence and uniqueness of measure solutions] \label{thm:MF}
Fix $T>0$ and $\s<\c$.
Then, for any $\rho^0\in \mathcal{P}(\Omega_\s^0)$ there exists a unique probability measure
$\rho\in\POsT$ that verifies \eqref{eq:law}--\eqref{MF:IC}.
\end{theorem}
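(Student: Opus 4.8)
The plan is to recast the coupled problem \eqref{eq:law}--\eqref{MF:IC} as a fixed-point equation on the space of probability measures over trajectories and to invoke the Banach fixed-point theorem, adapting the scheme of \cite{CCR} from phase space to the path space $\Omega_\s^T$. I would work in the complete metric space $\POsTI$ of measures $\rho\in\POsT$ with prescribed initial condition $\mathbb{I}\#\rho=\rho^0$, metrized by the Wasserstein-$1$ distance $W_1$ built from $\Norm{\cdot}_{\Omega_\s^T}$. Given a candidate $\rho$, I freeze the force field $F_t[\rho]$ from \eqref{def:F} and, for each history $\gamma^0$ drawn from $\rho^0$, solve the non-autonomous characteristic system $\dot x=v$, $\dot v=F_t[\rho](x,v)$ on $[0,T]$ with history $\gamma^0$ and $v(0)=\dot\gamma^0(0)$. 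This defines a flow $\mathcal S^\rho:\Omega_\s^0\to\Omega_\s^T$, and I set $\mathcal T[\rho]:=\mathcal S^\rho\#\rho^0$; a fixed point of $\mathcal T$ is precisely a solution of \eqref{eq:law}--\eqref{MF:IC}, so the theorem reduces to showing that $\mathcal T$ is a well-defined contraction on $\POsTI$.

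The technical groundwork is the regularity of $F_t[\rho]$, which rests on the stability of the delay \eqref{eq:tau_gamma}. Since $\gamma$ is $\s$-Lipschitz with $\s<\c$, the map $\tau\mapsto\c^{-1}|x-\gamma(t-\tau)|$ is a contraction of modulus $\s/\c$, giving unique solvability and, by differentiating the fixed-point relation, $|\tau_{t,x}[\gamma]-\tau_{t,x'}[\gamma]|\le(\c-\s)^{-1}|x-x'|$ and $|\tau_{t,x}[\gamma]-\tau_{t,x}[\tilde\gamma]|\le(\c-\s)^{-1}\Norm{\gamma-\tilde\gamma}_{L^\infty}$; these propagate to Lipschitz bounds for $\Gamma_{t,x}$ and $\Pi_{t,x}$. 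Together with $0\le\psi\le1$ Lipschitz and the velocity bound $\s$, this yields that $F_t[\rho]$ is bounded by $2\s$, Lipschitz in $(x,v)$ uniformly in $t$, and Lipschitz in $\rho$ with respect to $W_1$. For the frozen system, Picard--Lindel\"of gives a unique solution on $[0,T]$, and I must check it stays in $\Omega_\s^T$: the constraint $|v(t)|\le\s$ follows from a maximum-principle computation, since at a time where $|v|=\s$ one has $\tfrac{\d}{\d t}|v|^2=2\int_{\Omega_\s^T}\psi(|\Gamma_{t,x}[\gamma]-x|)\,(v\cdot\Pi_{t,x}[\gamma]-|v|^2)\,\d\rho\le0$ because $|\Pi_{t,x}[\gamma]|\le\s$; while $|\dot v|=|F_t[\rho]|\le2\s$ supplies the $C_{2\s}$ regularity of $\dot\gamma$ on $[0,T]$ demanded by \eqref{def:OmegasT}. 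Hence $\mathcal S^\rho$ indeed lands in $\Omega_\s^T$, and $\mathcal T$ preserves $\POsTI$, the finite first moment being inherited from $\rho^0$ since displacements are bounded by $\s T$.

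The contraction estimate is where I expect the main obstacle. To compare $\mathcal T[\rho_1]$ and $\mathcal T[\rho_2]$ I would use a coupling that preserves the common history (legitimate because $\mathbb{I}\#\rho_i=\rho^0$), drive the two flows from the same $\gamma^0$, and close a Gronwall inequality for $|x_1-x_2|+|v_1-v_2|$ using the $(x,v)$- and $\rho$-Lipschitzness of $F$. The delicate term is $\Pi_{t,x}[\gamma_1]-\Pi_{t,x}[\gamma_2]=\dot\gamma_1(t-\tau_1)-\dot\gamma_2(t-\tau_2)$, which evaluates a trajectory derivative at a state-dependent delayed time: splitting it as $[\dot\gamma_1(t-\tau_1)-\dot\gamma_2(t-\tau_1)]+[\dot\gamma_2(t-\tau_1)-\dot\gamma_2(t-\tau_2)]$, the first term is controlled by $\Norm{\dot\gamma_1-\dot\gamma_2}_{L^\infty(0,T)}\le\Norm{\gamma_1-\gamma_2}_{\Omega_\s^T}$ and the second by the $2\s$-Lipschitz regularity of $\dot\gamma_2$ on $[0,T]$ combined with $|\tau_1-\tau_2|\le(\c-\s)^{-1}\Norm{\gamma_1-\gamma_2}_{L^\infty}$. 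The crux is the interface between the history and $[0,T]$: whenever a delay reaches into the shared history ($t-\tau_i<0$) the common initial segment forces the corresponding delays and delayed values to coincide, so the offending contribution drops out — this is exactly why $\Omega_\s^T$ is defined with $\dot\gamma|_{[0,T]}\in C_{2\s}$ rather than imposing Lipschitz velocities on the merely continuous history, and the transition regime near $t-\tau_i=0$ is the point requiring the most care. Closing the Gronwall loop yields $W_1(\mathcal T[\rho_1],\mathcal T[\rho_2])\le C(T)\,W_1(\rho_1,\rho_2)$ with $C(T)\to0$ as $T\to0$, so $\mathcal T$ is a contraction for small $T$ and Banach's theorem gives a unique local measure solution; since the a priori bounds $|v|\le\s$ and $|\dot v|\le2\s$ are uniform, the contraction time step can be taken uniform and finitely many concatenations extend existence and uniqueness to arbitrary $T>0$.
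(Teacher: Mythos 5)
Your proposal reproduces the paper's proof essentially lemma for lemma: your frozen-measure flow $\mathcal{S}^\rho$ is the paper's map $Z[\rho]$ of \eqref{def:Z}; your invariance checks $|v(t)|\le\s$, $|\dot v(t)|\le 2\s$ are Lemma \ref{lem:Z}; your delay and trajectory estimates are Lemmas \ref{lem:tau} and \ref{lem:gamma-gamma}; and your push-forward fixed point $\rho\mapsto \mathcal{S}^\rho\#\rho^0$ on $\POsTI$, contractive for small $T$ and extended globally by the uniform a priori bounds, is the content of Lemmas \ref{lem:Wdist}, \ref{lem:Zgronwall}, \ref{lem:ZZ} and the concluding paragraph of Section \ref{sec:MF}. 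So the architecture is the right one.

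There is, however, a genuine gap at exactly the step you single out as the crux. In the contraction estimate two different couplings play a role, and you conflate them. Pairing the two flows through the same history $\gamma^0$, i.e.\ using the plan $(\mathcal{S}^{\rho_1},\mathcal{S}^{\rho_2})\#\rho^0$ between the image measures, is unproblematic (this is Lemma \ref{lem:Wdist}). But the delicate difference $\Pi_{t,x}[\gamma_1]-\Pi_{t,x}[\gamma_2]$ arises when you compare the force fields $F_t[\rho_1]$ and $F_t[\rho_2]$, i.e.\ when you rewrite the two integrals as a single double integral with respect to a transference plan between $\rho_1$ and $\rho_2$. For the Gronwall loop to produce the factor $\mathcal{W}_t(\rho_1,\rho_2)$, that plan must be optimal for the cost $\Norm{\cdot}_{\Omega_\s^t}$ (this is what the paper does in Lemma \ref{lem:Zgronwall}), and an optimal plan between two elements of $\POsTI$ is in general \emph{not} supported on pairs with identical histories: the constraint $\mathbb{I}\#\rho_1=\mathbb{I}\#\rho_2=\rho^0$ equalizes the marginal law of the histories, not the pairing. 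Hence your argument that ``the common initial segment forces the delays and delayed values to coincide, so the offending contribution drops out'' is unavailable for precisely those pairs where you need it. If you instead insist on a history-preserving plan (which one can build by disintegration over $\rho^0$), its transport cost dominates $\mathcal{W}_t(\rho_1,\rho_2)$ rather than being dominated by it, so the claimed inequality $\mathcal{W}_T(\mathcal{T}[\rho_1],\mathcal{T}[\rho_2])\le C(T)\,\mathcal{W}_T(\rho_1,\rho_2)$ no longer follows; to salvage that route you would have to set up the whole fixed-point argument in the stronger, history-constrained transport distance and re-establish its completeness on $\POsTI$. For comparison, the paper discharges the $\Pi$-difference for \emph{arbitrary} pairs through estimate \eqref{PiPi}, trading your history-coincidence argument for the $2\s$-Lipschitz regularity of $\dot\xi$ on $[0,T]$, an estimate whose derivation requires the delayed evaluation times to lie in $[0,t]$. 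So your instinct that the interface between the history and $[0,T]$ is the sensitive point of the construction is sound, but it cannot be resolved by the coupling you propose.
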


Moreover, we shall provide a stability result, controlling a suitable notion of distance of solutions
by the distance of the initial data. For this purpose, we equip the space $\POsT$ with the Monge--Kantorovich--Rubinstein distance
\(  \label{def:MKR}
   \mathcal{W}_T(\rho,\nu) := \inf_{\pi\in\Lambda(\rho,\nu)} \iint_{\Omega_\s^T \times \Omega_\s^T} \Norm{\gamma-\xi}_{\Omega_\s^T} \d\pi(\gamma,\xi),
\)
with $\Norm{\cdot}_{\Omega_\s^T}$ given by \eqref{def:norm} and $\Lambda(\rho,\nu)$ denoting the set of transference plans between the measures $\rho$ and $\nu$,
i.e., probability measures on the product space $\Omega_\s^T \times \Omega_\s^T$ with first and second marginals
$\rho$ and $\nu$, respectively.
We then have the following stability result.

\begin{theorem}[Stability] \label{thm:stability}
For each $T>0$ there exists a constant $M_T>0$ such that
for any $\rho^0$, $\nu^0 \in \POsZ$,
\(  \label{stability}
   \mathcal{W}_T(\rho,\nu) \leq M_T \mathcal{W}_0(\rho^0, \nu^0),
\)
where $\rho\in\POsT$, and, resp., $\nu\in\POsT$ are the unique solutions of \eqref{eq:law}--\eqref{MF:IC}
constructed in Theorem \ref{thm:MF} subject to the initial data $\rho^0$ and, resp. $\nu^0$.
\end{theorem}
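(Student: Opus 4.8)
The plan is to establish the stability estimate via a Gronwall-type argument on the Monge–Kantorovich–Rubinstein distance, coupling solutions through a common transference plan that is transported along the characteristic flow of the mean-field system. The strategy mirrors the stability arguments of~\cite{CCR}, adapted to the space $\Omega_\s^T$ of Lipschitz trajectories rather than the phase space $\R^{2d}$.

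Concretely, I would first fix an optimal transference plan $\pi^0\in\Lambda(\rho^0,\nu^0)$ realizing $\mathcal{W}_0(\rho^0,\nu^0)$. Using the solution operators from Theorem~\ref{thm:MF}, I would push each initial curve forward to its full trajectory on $(-\infty,T]$, thereby constructing a plan $\pi\in\Lambda(\rho,\nu)$ supported on pairs $(\gamma,\xi)$ of corresponding solution trajectories. Since $\mathcal{W}_T(\rho,\nu)$ is an infimum over all transference plans, it suffices to bound the cost of this particular $\pi$, namely $\iint \Norm{\gamma-\xi}_{\Omega_\s^T}\,\d\pi$. By the definition~\eqref{def:norm} of the norm, this splits into control of $\Norm{\gamma-\xi}_{L^\infty(-\infty,T)}$ and of $\Norm{\dot\gamma-\dot\xi}_{L^\infty(0,T)}$; on $(-\infty,0]$ the two curves differ only through their initial data, so those contributions are already controlled by $\mathcal{W}_0(\rho^0,\nu^0)$.

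The core of the argument is then a differential inequality on $[0,T]$. Writing $D(t) := \iint \bigl( \sup_{s\le t}|\gamma(s)-\xi(s)| + \sup_{0\le s\le t}|\dot\gamma(s)-\dot\xi(s)| \bigr)\,\d\pi(\gamma,\xi)$, I would differentiate (or difference) the mean-field dynamics~\eqref{eq:MF}--\eqref{def:F} along the two trajectories. The acceleration is governed by the operator $F_t[\rho]$, so I must estimate $F_t[\rho](\gamma(t),\dot\gamma(t)) - F_t[\nu](\xi(t),\dot\xi(t))$. This difference decomposes into (i) a term where the measure is held fixed at $\rho$ but the evaluation points $(x,v)$ differ, controlled by Lipschitz continuity of $\psi$ together with Lipschitz bounds on the delay map $\tau_{t,x}[\gamma]$ and on the evaluations $\Gamma_{t,x}$, $\Pi_{t,x}$, and (ii) a term where the evaluation point is fixed but $\rho$ is replaced by $\nu$, which I would bound by $\mathcal{W}_T(\rho,\nu)$ itself via the transport-plan representation. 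Assembling these yields an inequality of the form $D'(t) \lesssim D(t)$, and Gronwall's lemma then gives $D(T)\le M_T\, \mathcal{W}_0(\rho^0,\nu^0)$, which is the claim.

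The main obstacle I anticipate lies in the state-dependent delay: the quantities $\Gamma_{t,x}[\gamma]$ and $\Pi_{t,x}[\gamma]$ depend on $\gamma$ not only through its values but through the implicitly-defined solution $\tau_{t,x}[\gamma]$ of~\eqref{eq:tau_gamma}, and the latter depends on \emph{both} the base point $x$ and the whole curve $\gamma$. Establishing the requisite Lipschitz estimates — that $\tau_{t,x}[\gamma]-\tau_{t,x'}[\xi]$ is controlled by $|x-x'|$ and by $\Norm{\gamma-\xi}_{\Omega_\s^T}$, uniformly in $t$ — requires differentiating the implicit relation~\eqref{eq:tau_gamma} and exploiting the condition $\s<\c$, which keeps the map $\tau\mapsto |x-\gamma(t-\tau)|$ a contraction in $\tau$ (its Lipschitz constant in $\tau$ being at most $\s/\c<1$, by the $\s$-Lipschitz continuity of $\gamma$). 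A further subtlety is that $\Pi_{t,x}$ involves $\dot\gamma$ evaluated at the delayed time, so bounding it demands control of the \emph{derivative} differences as well; this is precisely why the norm~\eqref{def:norm} includes the $L^\infty$ norm of $\dot\gamma$ on $[0,T]$, and why the regularity $\dot\gamma|_{[0,T]}\in C_{2\s}([0,T])$ built into $\Omega_\s^T$ is needed to close the estimate on $\dot\gamma-\dot\xi$.
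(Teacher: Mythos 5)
Your proposal is correct in substance, but it takes a genuinely different route from the paper. You run a one-shot Dobrushin-type coupling: pick an optimal plan $\pi^0\in\Lambda(\rho^0,\nu^0)$, transport it along the two solution maps to obtain $\pi:=(Z[\rho]\times Z[\nu])\#\pi^0\in\Lambda(\rho,\nu)$ (legitimate by the fixed-point identities $\rho=Z[\rho]\#\rho^0$, $\nu=Z[\nu]\#\nu^0$), and close a single Gronwall inequality on the cost $D(t)$ of this one plan, splitting the difference of accelerations into a ``same measure, different evaluation points'' part and a ``same point, different measures'' part, the latter rewritten as a double integral against $\pi$ so that it is again dominated by $D(t)$. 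The paper instead splits at the level of the metric, via the triangle inequality with the intermediate measure $Z[\nu]\#\rho^0$,
\begin{equation*}
   \mathcal{W}_t(\rho,\nu) \;\leq\; \mathcal{W}_t\bigl(Z[\rho]\#\rho^0, Z[\nu]\#\rho^0\bigr) + \mathcal{W}_t\bigl(Z[\nu]\#\rho^0, Z[\nu]\#\nu^0\bigr),
\end{equation*}
bounding the first term (same initial data, different measures in the vector field) by $(1+t)\bar r(t)\int_0^t \mathcal{W}_s(\rho,\nu)\,\d s$ through Lemma \ref{lem:Wdist} and the pathwise estimate \eqref{ZZZZ}, and the second term (same solution map, different initial data) by $L_Z^t\,\mathcal{W}_0(\rho^0,\nu^0)$ through Lemmas \ref{lem:LipschZ} and \ref{lem:LipschZZ}; Gronwall is then applied to $t\mapsto\mathcal{W}_t(\rho,\nu)$ itself. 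The technical core is shared: both arguments rest on the Lipschitz estimates for $\tau_{t,x}$, $\Gamma_{t,x}$, $\Pi_{t,x}$ of Lemma \ref{lem:gamma-gamma}, exploiting $\s<\c$, which is exactly the obstacle you single out. Your route buys a shorter, self-contained proof with a single Gronwall loop and no need for the intermediate measure; the paper's route buys modularity, recycling Lemma \ref{lem:Zgronwall} (needed anyway for the contraction/existence argument) and cleanly separating stability in the initial datum from stability in the measure.

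One caveat, which your formulation actually exposes more clearly than the paper does: your $D(t)$ contains $\sup_{0\leq s\leq t}|\dot\gamma(s)-\dot\xi(s)|$, so the Gronwall seed $D(0^+)$ contains $\iint |\dot\gamma^0(0)-\dot\xi^0(0)|\,\d\pi^0$, and this is \emph{not} controlled by $\mathcal{W}_0(\rho^0,\nu^0)$, because $\Norm{\cdot}_{\Omega_\s^0}$ reduces to the $L^\infty$-norm of positions on $(-\infty,0]$ with no derivative part (consider $\eta_0(t)=\eps\sin(t/\eps)$ versus $\xi_0\equiv 0$). The paper's own proof has the identical issue, hidden in Lemma \ref{lem:LipschZ}: integrating \eqref{eq:MF2} there should produce the term $|v(0)-w(0)| = |\dot\eta_0(0)-\dot\xi_0(0)|$, which is silently dropped. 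So your argument closes under exactly the same implicit strengthening as the paper's (e.g., including the velocity discrepancy at $t=0$ in the $\Omega_\s^0$-distance); this is a bookkeeping issue of the norm chosen in the paper, not a defect specific to your approach.
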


The above stability theorem justifies the approximation of the system \eqref{eq:law}--\eqref{MF:IC}
by finite sets of discrete particles satisfying \eqref{eq:tau}--\eqref{IC:comp}.
Indeed, let us consider $(\solx, \solv)$ a solution of \eqref{eq:tau}--\eqref{IC:comp}
with a given $N\in\N$, and define $\rho_N\in\POsT$ the atomic measure being concentrated on the set of trajectories
$\solx = (x_1, \ldots, x_N)$, i.e.,
\(   \label{atomic}
   \rho_N(\gamma) := \frac{1}{N}  \sum_{j=1}^N \delta(\gamma - x_j),
\)
with $\delta$ denoting the Dirac measure on $\Omega_\s^T$, concentrated on the constant zero function $\gamma\equiv 0$.
Then, from \eqref{def:F} we have
\[
   F_t[\rho_N](x,v) = \frac{1}{N} \sum_{j=1}^N \psi\left( \left|x_j(t-\tau_{t,x}[x_j]) - x \right|\right) \left(\dot x_j(t-\tau_{t,x}[x_j]) - v \right),
\]
where $\tau_{t,x}[x_j]$ is the unique solution of \eqref{eq:tau_gamma} with $\gamma:=x_j$.
Consequently, \eqref{eq:tau}--\eqref{eq:CS2} can be rewritten as
\[
    \dot x_i(t) = v_i(t), \qquad \dot v_i(t) = \frac{N}{N-1} F_t[\rho_N](x_i(t),v_i(t)).
\]
Note that $\rho_N = \mbox{law}(\{x_i\}_{i\in[N]})$. Consequently,
up to the factor $\frac{N}{N-1}$ that can be removed by rescaling of time,
$\rho_N\in \POsT$ given by \eqref{atomic}
is a solution of \eqref{eq:law}--\eqref{MF:IC} subject to the initial datum
\(   \label{IC:atomic}
      \rho^0_N(\gamma) := \frac{1}{N}  \sum_{j=1}^N \delta(\gamma - x^0_j).
\)
Therefore, the important consequence of Theorem \ref{thm:stability} is that it provides
a method to derive the measure-valued mean-field description \eqref{eq:law}--\eqref{MF:IC}
as the limit $N\to\infty$ of the particle approximations \eqref{eq:tau}--\eqref{IC:comp}.

\begin{corollary}[Convergence of the particle method]  \label{cor:convergence}
Given $\rho^0\in \mathcal{P}(\Omega_\s^0)$, take a sequence $\rho^0_N\in \mathcal{P}(\Omega_\s^0)$
of atomic measures of the form \eqref{IC:atomic} such that
\[
    \lim_{N\to\infty} \mathcal{W}_0(\rho^0_N, \rho^0) = 0.
\]
Let $\rho^N\in\POsT$ be given by \eqref{atomic} with $(\solx, \solv)$
a solution of \eqref{eq:tau}--\eqref{IC:comp}.
Then, for any $T>0$,
\[
    \lim_{N\to\infty} \mathcal{W}_T(\rho_N, \rho) = 0,
\]
where $\rho^N\in\POsT$ is the unique solution of \eqref{eq:law}--\eqref{MF:IC}
subject to the initial datum $\rho^0$.
\end{corollary}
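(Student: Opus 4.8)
The plan is to obtain the corollary as an essentially immediate consequence of the stability estimate of Theorem \ref{thm:stability}, exploiting the observation recorded in the discussion preceding the statement that each atomic measure $\rho_N$ is itself a solution of the mean-field problem \eqref{eq:law}--\eqref{MF:IC}. Indeed, inserting the atomic ansatz \eqref{atomic} into the operator $F_t[\cdot]$ of \eqref{def:F} reproduces the right-hand side of \eqref{eq:CS2} up to the multiplicative constant $\tfrac{N}{N-1}$, which reflects the mismatch between the $\tfrac1N$ weight in \eqref{atomic} and the $\tfrac1{N-1}$ weight in \eqref{eq:CS2} (the diagonal term $j=i$ contributing zero in both). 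After absorbing this constant, $\rho_N$ is, by the uniqueness part of Theorem \ref{thm:MF}, the unique solution of \eqref{eq:law}--\eqref{MF:IC} with initial datum $\rho_N^0$ from \eqref{IC:atomic}; note that $\rho_N^0\in\mathcal{P}(\Omega_\s^0)$ trivially has finite first moment, being supported on finitely many atoms, so that Theorem \ref{thm:MF} indeed applies.

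With this identification in hand, both $\rho_N$ and the target measure $\rho$ are solutions of the \emph{same} mean-field system, differing only through their initial data $\rho_N^0$ and $\rho^0$. I would then apply Theorem \ref{thm:stability} to the pair $(\rho_N^0,\rho^0)$, yielding
\[
   \mathcal{W}_T(\rho_N, \rho) \;\leq\; M_T\, \mathcal{W}_0(\rho_N^0, \rho^0),
\]
with the constant $M_T$ independent of $N$. Since the hypothesis guarantees $\mathcal{W}_0(\rho_N^0, \rho^0)\to 0$ as $N\to\infty$, the right-hand side tends to zero, which is precisely the assertion $\mathcal{W}_T(\rho_N,\rho)\to 0$.

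The only delicate point — and the one I expect to be the main obstacle — is the rigorous handling of the factor $\tfrac{N}{N-1}$, since the uniformity of $M_T$ in $N$ must not be jeopardized. The route suggested in the text is to absorb the factor by a time rescaling; one must then check that this rescaling keeps the trajectories inside $\Omega_\s^T$ and does not spoil the $N$-independence of $M_T$, which is slightly subtle because rescaling time while leaving space fixed alters the effective role of $\c$ in the delay relation \eqref{eq:tau_gamma}. A cleaner alternative I would prefer is to treat $\tfrac{N}{N-1}F_t[\cdot]$ as an $O(1/N)$ perturbation of the mean-field vector field: letting $\widetilde\rho_N$ denote the exact solution of \eqref{eq:law}--\eqref{MF:IC} with datum $\rho_N^0$, a Grönwall argument of the type underlying Theorem \ref{thm:stability} controls the trajectory-wise discrepancy between $\rho_N$ and $\widetilde\rho_N$ by $O(1/N)$, so that $\mathcal{W}_T(\rho_N,\widetilde\rho_N)\to 0$; combining this with the stability bound $\mathcal{W}_T(\widetilde\rho_N,\rho)\le M_T\,\mathcal{W}_0(\rho_N^0,\rho^0)$ through the triangle inequality for $\mathcal{W}_T$ again delivers the claim.
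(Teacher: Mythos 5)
Your proposal is correct and follows essentially the same route as the paper: the paper likewise identifies the atomic measure $\rho_N$ as a solution of \eqref{eq:law}--\eqref{MF:IC} with initial datum $\rho^0_N$ (modulo the factor $\frac{N}{N-1}$, which it dismisses with the remark that it "can be removed by rescaling of time") and then concludes immediately from the stability estimate of Theorem \ref{thm:stability}, exactly as you do. Your extra scrutiny of the $\frac{N}{N-1}$ factor, including the $O(1/N)$-perturbation alternative, is in fact more careful than the paper's treatment, which offers no further justification of the rescaling claim.
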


Finally, in Section \ref{subsec:FP} we discuss the question
whether one can express the mean-field limit problem \eqref{eq:law}--\eqref{MF:IC}
in terms of a Fokker-Planck equation for some time dependent phase-space particle density $g_t \in \mathcal{P}(\R^d\times\R^d)$, $t\geq 0$.
A short consideration leads to the conclusion that no such description is possible,
i.e., one indeed has to resort to the formulation in terms of probability measures
on the space of time-dependent trajectories.

\section{Global existence and uniqueness of solutions - proof of Theorem \ref{thm:ex}} \label{sec:ex}

We start by proving two auxiliary results establishing unique solvability of the equation \eqref{eq:tau}
and a bound on the delay $\tau_{ij}$.

\begin{lemma} \label{lem:tau}
For some $t\in\R$, let $x\in C_\s((-\infty,t]; \R^d)$ with the Lipschitz constant $\s<\c$.
Then, for each $z\in\R^d$, the equation
\(  \label{tauxz}
   \c \tau = |z-x(t-\tau)|
\)
is uniquely solvable in $\tau\geq 0$.
\end{lemma}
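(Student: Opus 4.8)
The plan is to show that the equation $\c\tau = |z - x(t-\tau)|$ has a unique nonnegative solution by reformulating it as a fixed-point problem for a scalar function of $\tau$ and exploiting the assumption $\s < \c$. Define $g(\tau) := \c\tau - |z - x(t-\tau)|$ on $[0,\infty)$. A solution of \eqref{tauxz} is exactly a zero of $g$, so I would establish that $g$ has precisely one zero on $[0,\infty)$.

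For existence, I would first note that $g$ is continuous on $[0,\infty)$, since $x$ is continuous (being Lipschitz) and the absolute value is continuous. Evaluating at the endpoints of the domain: at $\tau = 0$ we get $g(0) = -|z - x(t)| \leq 0$. For large $\tau$, the linear term $\c\tau$ dominates; more precisely, using the $\s$-Lipschitz bound $|x(t-\tau)| \leq |x(t)| + \s\tau$ we obtain $|z - x(t-\tau)| \leq |z - x(t)| + \s\tau$, hence $g(\tau) \geq (\c - \s)\tau - |z - x(t)|$, which tends to $+\infty$ as $\tau\to\infty$ because $\c - \s > 0$. By the intermediate value theorem, $g$ has at least one zero $\tau \geq 0$.

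For uniqueness, the key is to show $g$ is \emph{strictly increasing} on $[0,\infty)$, or at least strictly increasing wherever it vanishes. I would argue this by estimating the increments: for $0 \leq \tau_1 < \tau_2$,
\[
   g(\tau_2) - g(\tau_1) = \c(\tau_2 - \tau_1) - \bigl( |z - x(t-\tau_2)| - |z - x(t-\tau_1)| \bigr).
\]
By the reverse triangle inequality and the $\s$-Lipschitz property of $x$, the bracketed difference is bounded in absolute value by $|x(t-\tau_2) - x(t-\tau_1)| \leq \s|\tau_2 - \tau_1|$. Therefore
\[
   g(\tau_2) - g(\tau_1) \geq (\c - \s)(\tau_2 - \tau_1) > 0,
\]
so $g$ is strictly increasing and its zero is unique.

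The argument is essentially a monotonicity-plus-continuity argument, and I do not anticipate a serious obstacle; the only point requiring care is handling the absolute value $|z - x(t-\tau)|$ cleanly, which the reverse triangle inequality resolves without any case distinction or differentiability assumption on $x$. It is worth emphasizing that the condition $\s < \c$ is used in two essential places — to force coercivity of $g$ at infinity (existence) and to guarantee strict monotonicity (uniqueness) — which clarifies precisely why the hypothesis that agents travel slower than the information propagation speed is needed.
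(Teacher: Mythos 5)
Your proof is correct. Note that the paper does not actually prove this lemma itself: its ``proof'' is a one-line citation to \cite[Lemma 2.2]{Has:sdHK}, so your argument supplies a self-contained proof where the paper defers to an external reference. The natural alternative route (and the standard one for such state-dependent delay equations) is a contraction argument: the map $\tau \mapsto \c^{-1}|z - x(t-\tau)|$ sends $[0,\infty)$ into itself and, by the reverse triangle inequality and the $\s$-Lipschitz bound --- exactly the estimate you use for monotonicity --- is a contraction with constant $\s/\c < 1$, so the Banach fixed-point theorem gives existence and uniqueness in one stroke. Your decomposition into an intermediate-value argument for existence (coercivity of $g(\tau) = \c\tau - |z - x(t-\tau)|$) plus strict monotonicity for uniqueness is equally valid and arguably more elementary, since it avoids completeness and Picard iteration, and it isolates cleanly the two distinct roles played by the hypothesis $\s < \c$. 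One cosmetic remark: the bound $|z - x(t-\tau)| \leq |z - x(t)| + \s\tau$ follows from the triangle inequality $|z - x(t-\tau)| \leq |z - x(t)| + |x(t) - x(t-\tau)|$ combined with the Lipschitz property, not from the intermediate statement $|x(t-\tau)| \leq |x(t)| + \s\tau$ as literally written; the estimate itself is of course correct, so this is a matter of wording rather than a gap.
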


\begin{proof}
See the proof of \cite[Lemma 2.2]{Has:sdHK}.
\end{proof}

\begin{lemma}\label{lem:tauij}
For some $t\in\R$, let $\solx = (x_1,\ldots, x_N) \in C_\s((-\infty,t]; \R^{Nd})$ with all trajectories $x_i$ uniformly Lipschitz continuous on $(-\infty,t]$
with Lipschitz constant $\s<\c$.
Then for all $i, j \in [N]$ we have
\(  \label{est:tauij}
   \tau_{ij}(t) \leq \frac{\dx(t)}{\c-\s},
\)
where $\tau_{ij}=\tau_{ij}(t)$ is the unique solution of \eqref{eq:tau} and $\dx=\dx(t)$ defined in \eqref{dXdV}.
\end{lemma}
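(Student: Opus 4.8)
The plan is to estimate $\taij$ directly from its defining equation \eqref{eq:tau}, using only the triangle inequality together with the Lipschitz continuity of the trajectories; no fixed-point or iteration argument is required. First I would note that, by Lemma \ref{lem:tau} applied with $z := x_i(t)$ and the trajectory $x_j$ (which is $\s$-Lipschitz on $(-\infty,t]$ with $\s<\c$), the quantity $\taij(t)\geq 0$ solving \eqref{eq:tau} exists and is unique, so the left-hand side of \eqref{est:tauij} is well defined.

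Next I would start from $\c\,\taij = |x_i(t) - x_j(t-\taij)|$ and insert the intermediate point $x_j(t)$, so that
$$\c\,\taij \;=\; |x_i(t) - x_j(t-\taij)| \;\leq\; |x_i(t) - x_j(t)| + |x_j(t) - x_j(t-\taij)|.$$
The first term on the right is bounded by the spatial diameter $\dx(t)$ by the definition \eqref{dXdV}, while the second term is the displacement of $x_j$ over the time increment $\taij\geq 0$, hence bounded by $\s\,\taij$ thanks to the $\s$-Lipschitz continuity of $x_j$. This yields $\c\,\taij \leq \dx(t) + \s\,\taij$.

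Finally, since the standing hypothesis $\s<\c$ guarantees $\c-\s>0$, I would rearrange to $(\c-\s)\,\taij \leq \dx(t)$ and divide, obtaining the claimed bound $\taij \leq \dx(t)/(\c-\s)$. There is no genuine obstacle here: the only point requiring care is the strict inequality $\s<\c$, which both makes $\taij$ well defined (via Lemma \ref{lem:tau}) and ensures that the final division by $\c-\s$ preserves the direction of the inequality. The estimate is uniform in $i,j\in[N]$ because $\dx(t)$ is the maximal pairwise distance and the Lipschitz constant $\s$ is common to all trajectories.
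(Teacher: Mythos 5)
Your proof is correct and follows essentially the same route as the paper: both insert the intermediate point $x_j(t)$ via the triangle inequality, bound $|x_j(t)-x_j(t-\tau_{ij})|$ by $\s\tau_{ij}$ using the $\s$-Lipschitz continuity, and rearrange using $\c-\s>0$. The only (harmless) addition is your explicit invocation of Lemma \ref{lem:tau} for well-definedness, which the paper leaves implicit in the statement of the lemma.
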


\begin{proof}
By \eqref{eq:tau} we have
\[
    \c \tau_{ij} =   |\wxji - x_i|.
\]
On the other hand, due to the $\s$-Lipschitz continuity of $x_j$,
\(   \label{est:wxjixj}
   |\wxji - x_j|  = |x_j(t-\tau_{ij}) - x_j(t)| \leq \s \tau_{ij}.
\)
Therefore, by the triangle inequality,
\[
   \c \tau_{ij} = |\wxji - x_i| \leq |x_i-x_j| + |\wxji-x_j| \leq \dx(t) + \s\tau_{ij}, 
\]
and \eqref{est:tauij} follows.
\end{proof}

Next, we prove local in time existence and uniqueness of solutions of the system \eqref{eq:tau}--\eqref{IC:comp}.
The proof is an adaptation of the Picard-Lindel\"of theorem, based on construction of a contraction mapping.

\begin{lemma}\label{lem:local}
Let the influence function $0 \leq \psi \leq 1$ be uniformly Lipschitz continuous on $[0,\infty)$.
Let $0<\s<\c$ be fixed and let the initial datum $\solv^0 = (v^0_1, \ldots, v^0_N) \in \mathbb{V}_\s^0$,
Then the system \eqref{eq:tau}--\eqref{IC:comp} admits unique local solutions
in the class of Lipschitz continuous velocity trajectories.
\end{lemma}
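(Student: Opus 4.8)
The plan is to prove Lemma~\ref{lem:local} via a fixed-point argument adapted from the Picard--Lindel\"of theorem, working on the space of $\s$-Lipschitz velocity trajectories. First I would fix a small time horizon $T_0>0$ (to be chosen) and consider the complete metric space
\[
   \mathcal{X} := \left\{ \solv \in C\big((-\infty, T_0]; \R^{Nd}\big);\; \solv|_{(-\infty,0]} = \solv^0,\; |v_i(t)|\leq \s \mbox{ for all } t\leq T_0,\, i\in[N] \right\},
\]
endowed with the $L^\infty(0,T_0)$-metric. Given a candidate $\solv\in\mathcal{X}$, I reconstruct the positions by integration, setting $x_i(t) := x_i^0(0) + \int_0^t v_i(s)\,\d s$ for $t\in[0,T_0]$ and $x_i = x_i^0$ for $t\leq 0$; by \eqref{IC:comp} and the velocity bound, each $x_i$ is $\s$-Lipschitz continuous on $(-\infty, T_0]$. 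This is exactly the regularity required by Lemma~\ref{lem:tau}, so for each pair $(i,j)$ and each $t\in[0,T_0]$ the delay $\tau_{ij}(t)$ solving \eqref{eq:tau} exists and is unique, and by Lemma~\ref{lem:tauij} it is bounded by $\dx(t)/(\c-\s)$. I then define the map $\mathcal{T}:\mathcal{X}\to\mathcal{X}$ by
\[
   (\mathcal{T}\solv)_i(t) := v_i^0(0) + \int_0^t \frac{1}{N-1}\sum_{j=1}^N \psij(s)\big(\wvji(s) - v_i(s)\big)\,\d s,
\]
with the convention that $(\mathcal{T}\solv)_i = v_i^0$ on $(-\infty,0]$.

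Next I would verify that $\mathcal{T}$ maps $\mathcal{X}$ into itself, which is the step where the velocity bound $\s$ must be preserved. The key structural observation is that the right-hand side of \eqref{eq:CS2} is a convex-combination-type operator: since $0\leq\psij\leq 1$ and $|\wvji|\leq\s$, $|v_i|\leq\s$, the integrand is a weighted average pulling $\dot v_i$ toward velocities of modulus at most $\s$, so the velocity modulus cannot increase under the dynamics. Making this rigorous, I would estimate $|(\mathcal{T}\solv)_i(t)|$ and, more carefully, show that $\frac{\d}{\d t}|v_i|^2 \leq 0$ whenever $|v_i|=\s$, or simply use that the vector field points inward on the sphere $|v_i|=\s$; this guarantees the invariance of the constraint $|v_i|\leq\s$ on all of $[0,T_0]$ and indeed globally, which is why no smallness of $T_0$ is needed for the self-mapping property. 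The Lipschitz continuity of $\mathcal{T}\solv$ with constant $\s$ follows from the same bound on the integrand.

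The crux is the contraction estimate. Given $\solv, \solw\in\mathcal{X}$ with reconstructed positions $\solx,\soly$, I must bound $\Norm{\mathcal{T}\solv - \mathcal{T}\solw}_{L^\infty(0,T_0)}$ by $L\,T_0\,\Norm{\solv-\solw}_{L^\infty(0,T_0)}$ for a constant $L$ independent of $T_0$; choosing $T_0 < 1/L$ then yields a strict contraction and the Banach fixed-point theorem delivers the unique local solution. I expect the main obstacle to lie precisely here, in controlling the differences of the delayed quantities $\psij$, $\wvji$ evaluated along the two trajectories. The difficulty is twofold. First, $\tau_{ij}$ depends on the trajectory in an implicit, state-dependent way through \eqref{eq:tau}, so I must show a Lipschitz dependence $|\tau_{ij}^{\solx}(t) - \tau_{ij}^{\soly}(t)| \lesssim \Norm{\solx - \soly}_{L^\infty}$; this is obtained by subtracting the two defining equations \eqref{eq:tau}, using the $\s$-Lipschitz continuity of the trajectories together with $\s<\c$ to absorb the term $\s|\tau_{ij}^{\solx} - \tau_{ij}^{\soly}|$ on the left, which is exactly where the strict inequality $\s<\c$ is essential. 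Second, the delayed velocity $\wvji = v_j(t-\tau_{ij})$ is only Lipschitz, not differentiable, so I control $|\wvji^{\solv} - \wvji^{\solw}|$ by splitting into a same-time difference $|v_j - w_j|$ and a shifted-argument difference bounded via the Lipschitz constant $\s$ times $|\tau_{ij}^{\solx} - \tau_{ij}^{\soly}|$, and I bound the difference of communication rates $\psij$ using the uniform Lipschitz continuity of $\psi$. Assembling these estimates and noting that the position differences $\Norm{\solx-\soly}_{L^\infty(0,T_0)} \leq T_0\Norm{\solv-\solw}_{L^\infty(0,T_0)}$ by the integral reconstruction, all contributions carry a factor $T_0$, giving the desired contraction for $T_0$ small enough.
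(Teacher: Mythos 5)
Your overall architecture coincides with the paper's: a Picard operator on velocity trajectories with the $L^\infty(0,T_0)$ metric, positions reconstructed by integration, unique solvability of \eqref{eq:tau} from Lemma~\ref{lem:tau}, and the absorbing trick based on the strict inequality between the Lipschitz constant of the positions and $\c$ to get Lipschitz dependence of $\tau_{ij}$ on the trajectories. However, there is a genuine gap in your self-mapping step. The inward-pointing/invariance argument is valid only for a trajectory that actually solves the differential equation, i.e.\ for a fixed point; it says nothing about the image of a general input under the integral operator. Indeed, $\tot{}{t}(\mathcal{T}\solv)_i(t) = \frac{1}{N-1}\sum_{j\neq i}\psij(t)\bigl(\wvji(t)-v_i(t)\bigr)$, where $v_i$ is the \emph{input}; the output $(\mathcal{T}\solv)_i$ appears nowhere on the right-hand side, so nothing confines the output to the ball of radius $\s$. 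Concretely, take $N=2$, $d=1$, $\psi\equiv 1$, $v_1^0\equiv v_2^0\equiv\s$, and an input $\solv\in\mathcal{X}$ with $v_1\equiv\s$ on $(-\infty,T_0]$ and with $v_2$ decreasing linearly from $\s$ to $-\s$ on $[0,\eps]$ and equal to $-\s$ afterwards. Then for $t>\eps$ one has $(\mathcal{T}\solv)_2(t)=\s+\int_0^t\bigl(v_1(s-\tau_{21}(s))-v_2(s)\bigr)\,\d s=\s+\int_0^t\bigl(\s-v_2(s)\bigr)\,\d s\geq \s+2\s(t-\eps)>\s$. Since $\eps$ can be chosen smaller than any prescribed $T_0$, the operator $\mathcal{T}$ fails to map $\mathcal{X}$ into itself for \emph{every} $T_0>0$; in particular your claim that ``no smallness of $T_0$ is needed for the self-mapping property'' is false, and smallness of $T_0$ alone cannot repair it as long as you insist on the bound $\s$.

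This is exactly why the paper does not work with the bound $\s$: it fixes $\m$ with $\s<\m<\c$ and uses the space $\mathbb{W}_\m^T$ of trajectories bounded by $\m$ and $2\m$-Lipschitz continuous on $[0,T]$. Then $|\Upsilon_i[\omega](t)|\leq\s+2\m T$, so the choice $T\leq\frac{\m-\s}{2\m}$ yields the self-map property, while $\m<\c$ keeps the Lipschitz constant of the reconstructed positions below $\c$, so that Lemma~\ref{lem:tau} still provides the delays. The sharp bound $\s$ is recovered only a posteriori, for the solution itself, in Lemma~\ref{lem:Rvbound} --- which is precisely where your invariance argument belongs. A second, related defect sits in your contraction step: you invoke ``the Lipschitz constant $\s$'' of $v_j$ to bound $|v_j(t-\tau_{ij}^{\solx})-v_j(t-\tau_{ij}^{\soly})|$, but your space $\mathcal{X}$, as defined, imposes no Lipschitz condition on the velocities at all ($\s$ is the Lipschitz constant of the \emph{positions}); and even if you added one, the constant that the operator actually propagates is $2\m$ (the bound on the integrand, i.e.\ on $\dot v_i$), not $\s$. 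The condition $\solv|_{[0,T]}\in C_{2\m}([0,T];\R^{Nd})$ is built into $\mathbb{W}_\m^T$ exactly to legitimize this step. With these two repairs --- the enlarged bound $\m\in(\s,\c)$ together with the $2\m$-Lipschitz constraint and the smallness condition on $T$ --- your argument becomes the paper's proof.
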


\def\Picard{\Upsilon}

\begin{proof}
Let us fix any $\m$ such that
\[
   \s < \m < \c
\]
and for some $T>0$ to be specified later, define the set $\mathbb{W}_\m^T$
of continuous velocity trajectories on $(-\infty,T]$,
which coincide with the initial datum $\solv^0$ on the interval $(-\infty, 0]$,
and are uniformly bounded by $\m$
and $2\m$-Lipschitz continuous on the interval $[0, T]$, i.e.,
\[
   \mathbb{W}_\m^T &:=& \Bigl\{ \solv\in C((-\infty, T];\R^{Nd});\, \solv|_{(-\infty, 0]} \equiv \solv^0, \; \solv|_{[0,T]}\in C_{2\m}([0,T]; R^{Nd}),  \Bigr. \\
       && \qquad\qquad\qquad\qquad\qquad\qquad\qquad \Bigl.  |v_i(t)| \leq \m \mbox{ for all } t\in [0,T],\, i\in [N]\Bigr\},
\]
where we again use the notation $\solv = (v_1, \ldots, v_N)\in \R^{Nd}$
and $C_{2\m}([0,T]; R^{Nd})$ denotes the set of $2\m$-Lipschitz continuous
trajectories on $[0,T]$. Let us note that since the initial datum $\solv^0$ verifies
$|v^0_i(t)|\leq \s < \m$ for all $t\leq 0$ and $i\in [N]$, the set $\mathbb{W}_\m^T$ is nonempty.

We equip the set $\mathbb{W}_\m^T$ with the topology of uniform convergence,
i.e., with the $L^\infty$-norm on $(0,T)$.
Then $\mathbb{W}_\m^T$ is a complete metric space. 

We construct a unique local solution of \eqref{eq:tau}--\eqref{IC:comp} by finding a unique fixed point
of the Picard operator $\Picard: \mathbb{W}_\m^T \to \mathbb{W}_\m^T$,
where the $i$-th component is the $d$-dimensional vector-valued function $\Picard_i[\omega](t)$,
given for $t\in [0,T]$ by
\( \label{Picard}
  \Picard_i[\omega](t) :=  v_i^0(0) + \frac{1}{N-1} \int_0^t \sum_{j\neq i} \psi(|\xi_j(s-\tau_{ij}(s))-\xi_i(s)|) (\omega_j(s-\tau_{ij}(s))-\omega_i(s)) \d s
\)
with
\(    \label{def:xi}
   \xi_i(t) = x_i^0(0) + \int_0^t \omega_i(s) \d s \qquad\mbox{for }i\in[N], \quad t\leq T,
\)
and we set $\Picard[\omega]$ equal to $\solv^0$ on $(-\infty,0]$.
Clearly, with $\omega\in \mathbb{W}_\m^T$, the trajectories $\xi_i$ defined by \eqref{def:xi} are Lipschitz continuous with Lipschitz constant $\m < \c$.
Then, Lemma \ref{lem:tau} gives unique solutions $\tau_{ij}=\tau_{ij}(t)$ of the equation
\( \label{Picard:tau}
   \c\tau_{ij}(t) = \left|\xi_j(t-\tau_{ij}(t)) - \xi_i(t) \right|,
\)
for all $i,j\in [N]$ and $t\leq T$.

We show that for sufficiently small $T>0$ the Picard operator $\Picard$ defined by \eqref{Picard} 
maps the space $\mathbb{W}_\m^T$ into itself and is a contraction on $\mathbb{W}_\m^T$
with respect to the $L^\infty(0,T)$-norm.

With the global bound $\psi\leq 1$ we have for any $\omega\in \mathbb{W}_\m^T$,
\[
   \left| \Picard_i[\omega](t) \right| &\leq&  \left| v_i^0(0) \right| + \frac{1}{N-1} \int_0^t \sum_{j\neq i} \left| \omega_j(s-\tau_{ij}(s)) \right| + \left| \omega_i(s)) \right| \d s \\
   &\leq&  \s + 2T\m,
\]
so that choosing $T \leq \frac{\m - \s}{2\m}$ gives
\[
   \left| \Picard_i[\omega](t) \right| \leq \m
\]
for all $i\in [N]$ and $t \in [0,T]$.
Moreover, for any $0\leq t_1 < t_2 \leq T$ we have
\[
   \left| \Picard_i[\omega](t_1) - \Picard_i[\omega](t_2) \right| &\leq&
      \frac{1}{N-1} \int_{t_1}^{t_2} \sum_{j\neq i} \psi(|\xi_j(s-\tau_{ij}(s))-\xi_i(s)|) |\omega_j(s-\tau_{ij}(s))-\omega_i(s)| \d s \\
      &\leq&
      2\m |t_1-t_2|,
\]
so that $\Picard[\omega]$ is $2\m$-Lipschitz continuous.
Consequently, for small enough $T>0$, $\Picard$ maps the space $\mathbb{W}_\m^T$ into itself.

To prove contractivity of $\Picard$, let us pick $\omega, \kappa \in \mathbb{W}_\m^T$
and calculate, for any fixed $i\in [N]$,
\(
   \label{integrand}
   \left|\Picard_i[\omega](t) - \Picard_i[\kappa](t)\right| &\leq&  \frac{1}{N-1} \sum_{j\neq i}
       \int_0^t \bigl| \psi(|\xi_j(s-\tau^\xi_{ij}(s))-\xi_i(s)|)(\omega_j(s-\tau^\xi_{ij}(s))-\omega_i(s)) \bigr. \\
      &&\qquad\qquad \bigl. -  \psi(|\eta_j(s-\tau^\eta_{ij}(s))-\eta_i(s)|)(\kappa_j(s-\tau^\eta_{ij}(s)) - \kappa_i(s)) \bigr| \d s,
    \nonumber
\)
where
\[
   \xi_i(t) = x_i^0(0) + \int_0^t \omega_i(s) \d s, \qquad
   \eta_i(t) = x_i^0(0) + \int_0^t \kappa_i(s) \d s,
\]
and $\tau_{ij}^\xi$ and, resp., $\tau_{ij}^\eta$ are solutions of \eqref{Picard:tau} with $\xi$, resp., $\eta$.
Note that $\xi_i \equiv \eta_i$ on $(-\infty,0]$ for all $i\in[N]$ and
\(  \label{est:xi-eta}
    \Norm{\xi_i - \eta_i}_{L^\infty(0,T)} \leq \max_{t\in [0,T]} \left| \int_0^t \omega_i(s) - \kappa_i(s) \d s \right|
       \leq T \Norm{\omega_i-\kappa_i}_{L^\infty(0,T)}.
\)
We now estimate, for $s\in [0,T]$, the integrand in \eqref{integrand} by
\(   \nonumber
   \bigl| \psi(|\xi_j(s-\tau^\xi_{ij}(s))-\xi_i(s)|)(\omega_j(s-\tau^\xi_{ij}(s))-\omega_i(s)) 
         -  \psi(|\eta_j(s-\tau^\eta_{ij}(s))-\eta_i(s)|)(\kappa_j(s-\tau^\eta_{ij}(s))) - \kappa_i(s) \bigr| \\
         \label{est:integrand}
         \leq 
      \bigl| \psi(|\xi_j(s-\tau^\xi_{ij}(s))-\xi_i(s)|) - \psi(|\eta_j(s-\tau^\eta_{ij}(s))-\eta_i(s)|) \bigr|
         \bigl| \omega_j(s-\tau^\xi_{ij}(s))-\omega_i(s) \bigr| \\
         \nonumber
         + \psi(|\eta_j(s-\tau^\eta_{ij}(s))-\eta_i(s)|)
           \bigl| \omega_j(s-\tau^\xi_{ij}(s)) - \kappa_j(s-\tau^\eta_{ij}(s)) - \omega_i(s) + \kappa_i(s)  \bigr|.
\)
To estimate the first term we use the assumed uniform Lipschitz continuity of $\psi$ with constant $L_\psi$
and the uniform boundedness $|\omega_i| \leq \m$,
\[
   \bigl| \psi(|\xi_j(s-\tau^\xi_{ij}(s))-\xi_i(s)|) - \psi(|\eta_j(s-\tau^\eta_{ij}(s))-\eta_i(s)|) \bigr|
         \bigl| \omega_j(s-\tau^\xi_{ij}(s))-\omega_i(s) \bigr| \\
         \leq
         2\m L_\psi \bigl| |\xi_j(s-\tau^\xi_{ij}(s))-\xi_i(s)| - |\eta_j(s-\tau^\eta_{ij}(s))-\eta_i(s)| \bigr|.
\]
By the triangle inequality we have 
\[
  \bigl| |\xi_j(s-\tau_{ij}^\xi(s))-\xi_i(s)| - |\eta_j(s-\tau_{ij}^\eta(s)) - \eta_i(s)| \bigr| &\leq&
       \Norm{\xi_i - \eta_i}_{L^\infty(0,T)} + |\xi_j(s-\tau_{ij}^\xi(s)) - \eta_j(s-\tau_{ij}^\eta(s))|  \\
       &\leq& 
       2 \Norm{\xi - \eta}_{L^\infty(0,T)} + |\eta_j(s-\tau_{ij}^\xi(s)) - \eta_j(s-\tau_{ij}^\eta(s))| \\
       &\leq& 
       2 \Norm{\xi - \eta}_{L^\infty(0,T)} + \m |\tau_{ij}^\xi(s) - \tau_{ij}^\eta(s)|,
\]
where we used the $\m$-Lipschitz continuity of $\eta_j$ for the last inequality.
Now, with \eqref{Picard:tau} we have
\(   \label{est:tautau}
     |\tau_{ij}^\xi(s) - \tau_{ij}^\eta(s)| = \c^{-1} \bigl| |\xi_j(s-\tau_{ij}^\xi(s)) - \xi_i(s)|
                - |\eta_j(s-\tau_{ij}^\eta(s)) - \eta_i(s)| \bigr|,
\)
so that
\[
   \bigl| |\xi_j(s-\tau_{ij}^\xi(s))-\xi_i(s)| - |\eta_j(s-\tau_{ij}^\eta(s)) - \eta_i(s)| \bigr| &\leq&
      2 \Norm{\xi - \eta}_{L^\infty(0,T)} 
      \\ &+& \m\c^{-1} \bigl| |\xi_j(s-\tau_{ij}^\xi(s)) - \xi_i(s)|
                - |\eta_j(s-\tau_{ij}^\eta(s)) - \eta_i(s)| \bigr|,
\]
which immediately gives
\(   \nonumber
    \bigl| |\xi_j(s-\tau_{ij}^\xi(s))-\xi_i(s)| - |\eta_j(s-\tau_{ij}^\eta(s)) - \eta_i(s)| \bigr|
        &\leq&  2 \left( 1 - \m\c^{-1} \right)^{-1}  \Norm{\xi - \eta}_{L^\infty(0,T)}   \\
        &\leq&  2 \left( 1 - \m\c^{-1} \right)^{-1}  T \Norm{\omega - \kappa}_{L^\infty(0,T)},
        \label{est:tautautau}
\)
where we used \eqref{est:xi-eta} for the second inequality.
Consequently, the first term in the right-hand side of \eqref{est:integrand} is bounded by the expression
\[
   4\m L_\psi \left( 1 - \m\c^{-1} \right)^{-1}  T \Norm{\omega - \kappa}_{L^\infty(0,T)}.
\]

The second term of the right-hand side in \eqref{est:integrand} is estimated as follows, using again $\psi\leq 1$,
\[
   \psi(|\eta_j(s-\tau^\eta_{ij}(s))-\eta_i(s)|)
           \bigl| \omega_j(s-\tau^\xi_{ij}(s)) - \kappa_j(s-\tau^\eta_{ij}(s)) - \omega_i(s) + \kappa_i(s)  \bigr|  \\
       \leq
           \Norm{\omega - \kappa}_{L^\infty(0,T)}
               + \bigl| \omega_j(s-\tau^\eta_{ij}(s)) - \kappa_j(s-\tau^\eta_{ij}(s)) \bigr| 
               + \bigl| \omega_j(s-\tau^\xi_{ij}(s)) - \omega_j(s-\tau^\eta_{ij}(s)) \bigr| \\
        \leq   2\Norm{\omega - \kappa}_{L^\infty(0,T)}
            + 2\m  |\tau_{ij}^\xi(s) - \tau_{ij}^\eta(s)|,
\]
where we used the $2\m$-Lipschitz continuity of $\omega$ in the second inequality.
Then, using \eqref{est:tautau} and \eqref{est:tautautau}, we conclude that
the second term in the right-hand side of \eqref{est:integrand} is bounded by the expression
\[
        2 \left(  1 + 2 \m\c^{-1} \left( 1 - \m\c^{-1} \right)^{-1}  T \right) \Norm{\omega - \kappa}_{L^\infty(0,T)}.
\]
Therefore, with \eqref{integrand} we finally arrive at
\(   \label{contractivity}
   \Norm{\Picard_i[\omega] - \Picard_i[\kappa]}_{L^\infty(0,T)} \leq
      2 T \left( 1 + 2\m (L_\psi + \c^{-1}) \left( 1 - \m\c^{-1} \right)^{-1}  T \right)
       \Norm{\omega - \kappa}_{L^\infty(0,T)}
\)
and choosing $T>0$ sufficiently small, the claim follows.

Finally, let us note that choosing $\m_1\neq \m_2$ may lead to different values
of $T_1$, $T_2$, however, due to the contraction property \eqref{contractivity},
the solution remains unique on $[0, \min\{T_1, T_2\}]$.
\end{proof}

With Lemma \ref{lem:local} we constructed a unique solution of \eqref{eq:tau}--\eqref{IC:comp}
on a sufficiently short time interval $[0,T]$.
In the next step we prove that, assuming $\solv^0 \in \mathbb{V}_\s^0$,
the velocity trajectories remain uniformly bounded by $\s$,
which implies that the solution is in fact global in time.
For this purpose, let us define the velocity radius of the agent group,
\(  \label{def:Rv}
   R_v(t) := \max_{i\in[N]} |v_i(t)|.
\)

\begin{lemma}\label{lem:Rvbound}
Let the initial datum $\solv^0 \in \mathbb{V}_\s^0$ with $\s<\c$.
Then, along the solutions of \eqref{eq:tau}--\eqref{IC:comp}, the diameter $R_v$ defined in \eqref{def:Rv} satisfies
\(  \label{v-bound}
   R_v(t) \leq \s \qquad\mbox{for all } t\geq 0.
\)
\end{lemma}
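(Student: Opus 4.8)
The plan is to exploit the dissipative weighted-averaging structure of the velocity equation \eqref{eq:CS2} and run an invariance (continuation) argument for the ball $\{|v|\le\s\}$, with the delay terms controlled through a local-maximum device. Observe first that, since $\ta{i}{i}=0$ makes the $j=i$ summand vanish, \eqref{eq:CS2} reads $\dot v_i = \frac{1}{N-1}\sum_{j\neq i}\psij(\wvji - v_i)$, and that $\Phi_i(t):=\frac{1}{N-1}\sum_{j\neq i}\psij$ satisfies $0\le\Phi_i\le 1$ thanks to $0\le\psi\le 1$. The solution produced by Lemma \ref{lem:local} has Lipschitz velocity components, so $R_v(t)=\max_i|v_i(t)|$ is locally Lipschitz (a finite maximum of Lipschitz functions) and admits an upper Dini derivative $D^+R_v$. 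For an index $i$ attaining the maximum at time $t$ with $|v_i(t)|>0$, I would compute
\[
  \tfrac12\tfrac{\d}{\d t}|v_i|^2 = \frac{1}{N-1}\sum_{j\neq i}\psij\bigl(v_i\cdot\wvji - |v_i|^2\bigr),
\]
and estimate $v_i\cdot\wvji\le|v_i|\,|\wvji|$ by Cauchy--Schwarz, which after dividing by $|v_i|$ yields $\tfrac{\d}{\d t}|v_i|\le \Phi_i\bigl(\max_j|\wvji|-R_v(t)\bigr)$.

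Next I would set up a contradiction. Since $\solv^0\in\mathbb{V}_\s^0$ gives $R_v(t)\le\s$ for all $t\le 0$, and $R_v$ is continuous with $R_v(0)\le\s$, suppose the conclusion fails and define $\tst:=\inf\{t>0:\,R_v(t)>\s\}$. By continuity $R_v(\tst)=\s$, while $R_v(s)\le\s$ for all $s\le\tst$ (by the initial datum for $s\le 0$ and by the definition of $\tst$ for $0\le s\le \tst$), and there is a sequence $t_n\downarrow\tst$ with $R_v(t_n)>\s$.

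The crux is to control the delayed velocities near $\tst$. For $\delta>0$ small enough that the solution exists on $[\tst,\tst+\delta]$, set $M(\delta):=\max_{[\tst,\tst+\delta]}R_v\ge R_v(\tst)=\s$. Because each delay obeys $\taij\ge 0$, the argument $t-\taij(t)\le t\le\tst+\delta$, so the delayed velocity is evaluated either at a time $\le\tst$, where $|\wvji|\le\s\le M(\delta)$, or at a time in $(\tst,\tst+\delta]$, where $|\wvji|\le M(\delta)$ by definition; in all cases $|\wvji(t)|\le M(\delta)$. Feeding this into the inequality above and using $\Phi_i\le1$ gives $D^+R_v(t)\le M(\delta)-R_v(t)$ on $[\tst,\tst+\delta]$. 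Comparing with the linear ODE $\dot w=M(\delta)-w$, $w(\tst)=\s$ (equivalently, noting $\tfrac{\d}{\d t}\bigl(e^{t}(R_v-M(\delta))\bigr)\le 0$), I obtain $R_v(t)\le M(\delta)-e^{-(t-\tst)}\bigl(M(\delta)-\s\bigr)$. Evaluating at the point of $[\tst,\tst+\delta]$ where $R_v$ attains $M(\delta)$ forces $\bigl(M(\delta)-\s\bigr)e^{-\delta}\le 0$, hence $M(\delta)\le\s$; thus $R_v\le\s$ on $[\tst,\tst+\delta]$, contradicting $R_v(t_n)>\s$. Therefore no such $\tst$ exists and $R_v(t)\le\s$ for all $t\ge0$.

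I expect the main obstacle to be precisely the self-consistency issue created by the \emph{state-dependent} delay: unlike the classical Cucker--Smale bound, here the delays $\taij(t)$ may be arbitrarily small, so at times $t>\tst$ the delayed velocities can be sampled in the as-yet-uncontrolled interval $(\tst,t)$, and one cannot simply invoke the already-established bound on the past. The device of bounding those terms by the local maximum $M(\delta)$ and closing the estimate with the exponential (Grönwall-type) comparison is what removes this circularity; the remaining steps are routine.
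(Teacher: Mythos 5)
Your proof is correct, and it takes a genuinely different route from the paper's, even though both are contradiction arguments at a first crossing time, both rest on the Cauchy--Schwarz bound $v_i\cdot\wvji\le|v_i|\,|\wvji|$, and both appeal to Lemma \ref{lem:local} to have the solution available slightly beyond the critical time. The paper enlarges the threshold: it fixes $\eps>0$ with $\s+\eps<\c$, proves that the strict bound $R_v<\s+\eps$ persists for all time, and sends $\eps\to 0$. At the hitting time $T$ it asserts $\tot{}{t+}R_v(T)^2\ge 0$ (inequality \eqref{R-cont}) and then distinguishes two cases: if all $\taij(T)=0$, the agents are co-located and the equality case of Cauchy--Schwarz forces an equilibrium; if some $\taij(T)>0$, that delayed velocity is sampled strictly in the past, where the \emph{strict} bound makes the one-sided derivative strictly negative, contradicting \eqref{R-cont}. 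Your proof works directly at level $\s$ and attacks the genuine obstruction---delays may be arbitrarily small, so delayed velocities can be sampled in the as-yet-uncontrolled window $(\tst,\tst+\delta]$---by bounding them with the local maximum $M(\delta)$ and closing the self-referential inequality $D^+R_v\le M(\delta)-R_v$ via exponential comparison, which forces $M(\delta)\le\s$. As for what each approach buys: the paper's $\eps$-device is pointwise and short once \eqref{R-cont} is granted, but it needs the zero-delay/equilibrium side case, implicitly needs the relevant $\psij$ to be positive to get the strict sign, and the justification of \eqref{R-cont} from ``maximality'' alone is delicate (your $\inf$-plus-approximating-sequence formulation is the clean way to extract such one-sided information); your Gr\"onwall device needs no case distinction, no strict inequalities, and no $\eps$-limit, so it is robust to vanishing influence ($\psij=0$) and to degenerate delays, at the modest cost of invoking the standard machinery of Dini derivatives of finite maxima and the comparison lemma.
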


\begin{proof}
Since $\s<\c$, we may fix $\eps >0$ such that $\s + \eps < \c$.
We shall prove that for all $t\geq 0$
\(  \label{R-eps}
   R_v(t) < \s + \eps.
\)
By continuity of the velocity trajectories,
\eqref{R-eps} holds on the maximal interval $[0,T)$ for some $T>0$.
For contradiction, let us assume that $T<+\infty$.
Then we have $R_v(T) = \s + \eps$.
Since $\s + \eps < \c$, Lemma \ref{lem:local} allows us to extend the solution
$(\solx, \solv)$ past $T$. I.e., the solution exists on some right neighborhood of $T$
and we have
\(  \label{R-cont}
   \tot{}{t+} R_v(T)^2 \geq 0,
\)
where $\tot{}{t+}  R_v(T)^2$ denotes the right-hand side derivative of $R_v^2$ at $t=T$.
Moreover, by continuity of the velocity trajectories,
there exists an index $i\in[N]$ such that $R_v(t) \equiv |v_i(t)|$
for $t\in [T,T+\delta)$ for some $\delta>0$.
From \eqref{eq:CS2} we then have
\[
   \tot{}{t+} R_v(T)^2 = \tot{}{t+} |v_i(T)|^2 &=& \frac{2}{N-1} \sum_{j=1}^N \widetilde\psi_{ij} \left[v_j(T-\tau_{ij}(T))- v_i(T)\right]\cdot v_i(T) \\
     &=& \frac{2}{N-1} \sum_{j=1}^N  \widetilde\psi_{ij} \left[ v_j(T-\tau_{ij}(T))\cdot v_i(T) - |v_i(T)|^2\right].
\]
We now distinguish two cases:
\begin{itemize}
\item
If $\tau_{ij}(T) = 0$ for all $j\in[N]$, then \eqref{eq:tau} implies $x_j(T) = x_i(T)$ for all $j\in[N]$.
Moreover, since by definition $|v_j(T)| \leq R_v(T) = |v_i(T)|$, we have by the Cauchy-Schwarz inequality
\(  \nonumber
   v_j(T-\tau_{ij}(T))\cdot v_i(T) - |v_i(T)|^2 &=& v_j(T)\cdot v_i(T) - |v_i(T)|^2   \\
   &\leq& \left( |v_j(T)| - |v_i(T)| \right) |v_i(T)| \leq 0,
   \label{CauchySchwarz1}
\)
so that
\[
   \tot{}{t+} R_v(T)^2 = \tot{}{t+} |v_i(T)|^2 
     \leq \frac{2}{N-1} \sum_{j=1}^N  \widetilde\psi_{ij} \left( |v_j(T)| - |v_i(T)| \right) |v_i(T)| \leq 0.
\]
Then \eqref{R-cont} implies $\tot{}{t+} R_v(T)^2 = 0$, which means that equality takes place
in the Cauchy-Schwarz inequality \eqref{CauchySchwarz1}, i.e.,
\[
   v_j(T)\cdot v_i(T) = |v_i(T)|^2 \qquad\mbox{for all } j\in [N].
\]
That means that $v_j(T) = v_i(T)$ for all $j\in[N]$, and since also $x_j(T) = x_i(T)$ for all $j\in[N]$,
the system reached equilibrium at time $T$ and does not evolve further
(note that Lemma \ref{lem:local} provides local uniqueness,
so the constant equilibrium solution is the only possible continuation
past the time $T$).
This trivially implies that \eqref{R-eps} holds for all $t>0$.

\item
If there exists at least one $j\in[N]$ such that $\tau_{ij}(T)>0$, then again by the Cauchy-Schwarz inequality we have
\[  
   v_j(T-\tau_{ij}(T))\cdot v_i(T) - |v_i(T)|^2 \leq \left( |v_j(T-\tau_{ij}(T))| - |v_i(T)| \right) |v_i(T)| < 0,
\]
since $|v_j(T-\tau_{ij}(T))| \leq R_v(T-\tau_{ij}(T)) < \s + \eps$
and $|v_i(T)| = R_v(T) = \s + \eps > 0$.
Consequently, 
\[
   \tot{}{t+} R_v(T)^2 < 0,
\]
which is a contradiction to \eqref{R-cont}.
\end{itemize}

The uniform bound \eqref{v-bound} is obtained by taking the limit $\eps\to 0$.
\end{proof}

Combination of the local uniqueness and existence result of Lemma \ref{lem:local}
with the uniform bound on the velocity diameter directly implies
the global existence and uniqueness claim of Theorem \ref{thm:ex}.

It remains to observe that, by Lemma \ref{lem:tauij}, we have
\[
   t - \tau_{ij}(t) \geq t - \frac{\dx(t)}{\c-\s},
\]
and from \eqref{eq:CS1},
\[
   \dx(t) \leq \dx(0) + \int_0^t \dv(s) \d s \leq \dx(0) + 2\s t,
\]
where we used the bound $\dv(t) \leq 2R_v(t) \leq 2\s$ provided by \eqref{v-bound}.
Consequently,
\[
   t - \tau_{ij}(t) \geq \frac{(\c-3\s)t - \dx(0)}{\c-\s},
\]
and
\[
   \min_{t\in [0,T]} \left( t - \tau_{ij}(t) \right) \geq - \frac{\dx(0) + [\c-3\s]^- T}{\c-\s}.
\]
Therefore, the initial datum is only relevant on the bounded interval $\left[- S(T), 0\right]$,
with $S(T)$ given by \eqref{ST}.

\section{Flocking - proof of Theorem \ref{thm:flocking}}\label{sec:flocking}
In this Section we shall assume the influence function $\psi$ to be a nonincreasing function of its argument on $[0,\infty)$.
This is without loss of generality, since if $\psi$ was not monotone,
it could be replaced by its nonincreasing rearrangement $\Psi$ in the below proofs,
\(  \label{Psi}
   \Psi(u) := \min_{s\in[0,u]} \psi(s) \qquad\mbox{for } u\geq 0,
\)
without affecting their validity.

\begin{lemma}\label{lem:shrinkage}
Along the solutions of \eqref{eq:tau}--\eqref{IC:comp} we have
\(   \label{claim:shrinkage}
    \tot{}{t} \dv(t) \leq  - \frac{N}{N-1}\upsi(t) \dv(t) + 2D(t)  \qquad\mbox{for almost all } t>0,
\)
with $\dv=\dv(t)$ given by \eqref{dXdV},
\(   \label{def:upsi}
   \upsi(t) := \min_{i,j \in [N]} \widetilde\psi_{ij} (t),
\)
and
\(   \label{def:D}
   D(t) :=  \max_{i\in [N]} \frac{1}{N-1}  \sum_{j\neq i}  \widetilde\psi_{ij} |\wvji - v_j|.
\)
\end{lemma}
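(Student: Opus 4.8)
The plan is to differentiate the velocity diameter $\dv$ along the solution and follow a single extremal pair of agents. Since each $v_i$ is $C^1$ with uniformly bounded derivative (from $\psi\le1$ and the bound $R_v\le\s$ of Lemma~\ref{lem:Rvbound}, one gets $|\dot v_i|\le2\s$), every function $t\mapsto|v_i(t)-v_j(t)|$ is Lipschitz, and hence so is their pointwise maximum $\dv$; thus $\dv$ is differentiable at almost every $t>0$. First I would fix such a $t$ with $\dv(t)>0$ and choose $k,\ell\in[N]$ with $\dv(t)=|v_k(t)-v_\ell(t)|$. Because $|v_k-v_\ell|\le\dv$ everywhere with equality at $t$, the nonnegative function $\dv-|v_k-v_\ell|$ attains a minimum at $t$, so $\tot{}{t}\dv(t)=\tot{}{t}|v_k(t)-v_\ell(t)|=n\cdot(\dot v_k-\dot v_\ell)$, where $n:=(v_k-v_\ell)/|v_k-v_\ell|$ is a unit vector. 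The degenerate case $\dv(t)=0$ is immediate, since there $t$ minimizes $\dv\ge0$ and so $\tot{}{t}\dv(t)=0\le 2D(t)$.

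Next I would substitute \eqref{eq:CS2} and carry out the decomposition that isolates the delay. Writing $\wvji-v_i=(v_j-v_i)+(\wvji-v_j)$ for each summand, I split
\[
  \dot v_k-\dot v_\ell &=& A + B,
\]
where $A:=\tfrac1{N-1}\sum_{j}\bigl[\widetilde\psi_{kj}(v_j-v_k)-\widetilde\psi_{\ell j}(v_j-v_\ell)\bigr]$ collects the instantaneous Cucker--Smale interaction and $B:=\tfrac1{N-1}\sum_{j}\bigl[\widetilde\psi_{kj}(\wv{j}{k}-v_j)-\widetilde\psi_{\ell j}(\wv{j}{\ell}-v_j)\bigr]$ collects the delay error.

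For the term $A$ I would invoke the extremality of the pair $(k,\ell)$: if $n\cdot v_j$ fell outside the interval $[\,n\cdot v_\ell,\ n\cdot v_k\,]$ for some $j$, then one of $|v_j-v_\ell|$, $|v_k-v_j|$ would strictly exceed $\dv(t)$, a contradiction; hence $n\cdot(v_j-v_k)\le0$ and $n\cdot(v_j-v_\ell)\ge0$ for every $j$. Together with $\widetilde\psi_{ij}\ge\upsi(t)$ this gives, term by term,
\[
  n\cdot A &\le& \frac{\upsi(t)}{N-1}\sum_{j}\bigl[n\cdot(v_j-v_k)-n\cdot(v_j-v_\ell)\bigr] \\
  &=& \frac{N}{N-1}\,\upsi(t)\,n\cdot(v_\ell-v_k) = -\frac{N}{N-1}\,\upsi(t)\,\dv(t).
\]
For the term $B$, using $|n|=1$, the Cauchy--Schwarz inequality, and the fact that the $j=k$ (resp.\ $j=\ell$) summand vanishes because $|\wv{k}{k}-v_k|=0$, the definition \eqref{def:D} of $D$ yields $n\cdot B\le\tfrac1{N-1}\sum_{j\ne k}\widetilde\psi_{kj}|\wv{j}{k}-v_j|+\tfrac1{N-1}\sum_{j\ne\ell}\widetilde\psi_{\ell j}|\wv{j}{\ell}-v_j|\le2D(t)$. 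Adding the two bounds gives \eqref{claim:shrinkage}.

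The main obstacle is conceptual rather than computational. With a state-dependent delay the classical Cucker--Smale argument does not apply directly, because it relies on the instantaneous velocities $v_j(t)$ being squeezed between the extremal projections $n\cdot v_\ell$ and $n\cdot v_k$, whereas the interaction in \eqref{eq:CS2} sees the delayed velocities $\wvji$. The splitting $\wvji-v_i=(v_j-v_i)+(\wvji-v_j)$ is precisely what repairs this: the first piece produces a genuine Cucker--Smale contraction term $A$ to which the extremality squeeze applies verbatim, while all delay-induced discrepancy is relegated to $B$, which cannot be signed but is controlled in magnitude by $D(t)$. The only remaining care is the (standard) Rademacher-type justification that the a.e.\ derivative of the maximal diameter coincides with that of the currently active pair.
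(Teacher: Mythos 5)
Your proof is correct and follows essentially the same route as the paper: the same splitting $\wvji - v_i = (v_j - v_i) + (\wvji - v_j)$, the same extremality/sign argument combined with $\widetilde\psi_{ij}\geq\upsi$ to produce the contraction term $-\frac{N}{N-1}\upsi\,\dv$, and the same Cauchy--Schwarz bound of the delay residue by $2D(t)$. The only differences are technical bookkeeping: you differentiate $\dv$ directly via Rademacher and a touching-from-below argument (also treating $\dv(t)=0$ explicitly), whereas the paper differentiates $\dv^2$ on a countable family of intervals where a fixed pair attains the maximum.
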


\begin{proof}
Due to the continuity of the solution trajectories $x_i(t)$,
there is an at most countable system of open, mutually disjoint
intervals $\{\mathcal{I}_\sigma\}_{\sigma\in\N}$ such that
\[
   \bigcup_{\sigma\in\N} \overline{\mathcal{I}_\sigma} = [0,\infty)
\]
and for each ${\sigma\in\N}$ there exist indices $i(\sigma)$, $k(\sigma)$
such that
\[
   \dv(t) = |v_{i(\sigma)}(t) - v_{k(\sigma)}(t)| \quad\mbox{for } t\in \mathcal{I}_\sigma.
\]
Then, using the abbreviated notation $i:=i(\sigma)$, $k:=k(\sigma)$,
we have for every $t\in \mathcal{I}_\sigma$,
\(  \label{shrinkage:1}
   \frac12 \tot{}{t} \dv(t)^2 &=& \frac12 \tot{}{t} |v_i - v_k|^2 \\
    &=& \frac{1}{N-1} \sum_{j\neq i} \widetilde\psi_{ij} \bigl(\wvji - v_i\bigr)\cdot (v_i-v_k)
      - \frac{1}{N-1} \sum_{j\neq k} \widetilde\psi_{kj} \bigl(\wv{j}{k} - v_k\bigr)\cdot (v_i-v_k).   \nonumber
\)
Let us work on the first term of the right-hand side. We have for any $j\in [N]$,
\(   \label{shrinkage:2}
   \widetilde\psi_{ij} \left(\wvji - v_i\right)\cdot (v_i-v_k) = \widetilde\psi_{ij} \left(\wvji - v_j\right)\cdot (v_i-v_k) + \widetilde\psi_{ij} \left(v_j- v_i\right)\cdot (v_i-v_k).
\)
By the Cauchy-Schwarz inequality and \eqref{def:D} we have
\[
   \frac{1}{N-1} \sum_{j\neq i} \widetilde\psi_{ij} \left(\wvji - v_j\right)\cdot (v_i-v_k) \leq 
   \frac{1}{N-1} \sum_{j\neq i} \widetilde\psi_{ij} \left|\wvji - v_j\right| \left| v_i-v_k \right| \leq D(t) \dv(t).
\]
For the second term in \eqref{shrinkage:2}, we observe, using the Cauchy-Schwarz inequality,
\[
   (v_j- v_i)\cdot (v_i-v_k) &=& (v_j-v_k)\cdot(v_i-v_k) - |v_i-v_k|^2 \\
      &\leq& |v_i-v_k| \bigl( |v_j-v_k| - |v_i-v_k| \bigr) \leq 0,
\]
since, by definition, $|v_j-v_k| \leq \dx = |v_i-v_k|$.
Moreover, with \eqref{def:upsi} we have
\[
    \widetilde \psi_{ij} \left(v_j- v_i\right)\cdot (v_i-v_k) \leq \upsi \, (v_j- v_i)\cdot (v_i-v_k).
\]
Carrying out analogous steps for the second term of the right-hand side of \eqref{shrinkage:1},
we finally obtain
\[
    \frac12 \tot{}{t} \dv^2 &\leq& 2\dv D + \frac{\upsi}{N-1} \sum_{j=1}^N 
      \bigl[(v_j- v_i) - (v_j-v_k)\bigr]\cdot (v_i-v_k) \\
      &=&  \left[ 2D - \frac{N}{N-1} \upsi \dv \right] \dv.
\]
This immediately gives the statement.
\end{proof}


For $t\geq 0$ let us define the maximal delay at time $t$,
\(   \label{otau}
   \otau(t) := \max_{i,j\in [N]} \tau_{ij}(t).
\)

\begin{lemma} \label{lem:D}
Let the initial datum $\solv^0\in C((-\infty,0]; \R^{Nd})$ be Lipschitz continuous
on $(-\infty,0]$, with Lipschitz constant $L_{\solv}^0\geq 0$.
Then, along the solutions of \eqref{eq:tau}--\eqref{IC:comp}, we have
\(  \label{D}
   D(t) \leq  L_{\solv}^0 \, [t-\otau(t)]^- + \int_{[t-\otau(t)]^+}^t D(s) \d s + \int_{[t-\otau(t)]^+}^t \dv(s) \d s \qquad\mbox{for } t\geq 0,
\)
where we denoted $[a]^+ := \max\{a,0\}$, $[a]^- := \max\{-a,0\}$,
the quantity $D=D(t)$ is defined in \eqref{def:D} and $\dv=\dv(t)$ given by \eqref{dXdV}.
\end{lemma}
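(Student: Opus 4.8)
The plan is to exploit that each summand $|\wvji - v_j|$ appearing in the definition \eqref{def:D} of $D(t)$ measures the change of a single velocity trajectory $v_j$ over its delay window $[t-\tau_{ij}(t),\,t]$, and that this change is itself controlled by the same functionals $D$ and $\dv$ evaluated at earlier times, which closes the estimate into the recursive integral form \eqref{D}. First I would fix $t\geq 0$ and let $i\in[N]$ be an index attaining the maximum in \eqref{def:D}. For each $j\neq i$ I would use the representation
\[
   \wvji - v_j &=& v_j(t-\tau_{ij}(t)) - v_j(t) = -\int_{t-\tau_{ij}(t)}^t \dot v_j(s)\,\d s,
\]
and split this integral at $s=0$, since the evolution is governed by \eqref{eq:CS2} only for $s>0$, whereas for $s\leq 0$ the trajectory coincides with the prescribed initial datum $v_j^0$.

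For the part $s\in\bigl[[t-\tau_{ij}(t)]^+,\,t\bigr]$ I would bound $|\dot v_j(s)|$ directly from \eqref{eq:CS2}: inserting the splitting $v_k(s-\tau_{jk}(s))-v_j(s)=\bigl(\wv{k}{j}(s)-v_k(s)\bigr)+\bigl(v_k(s)-v_j(s)\bigr)$ and the triangle inequality gives
\[
   |\dot v_j(s)| &\leq& \frac{1}{N-1}\sum_{k\neq j}\widetilde\psi_{jk}(s)\,|\wv{k}{j}(s) - v_k(s)| \\
   && {}+ \frac{1}{N-1}\sum_{k\neq j}\widetilde\psi_{jk}(s)\,|v_k(s) - v_j(s)|.
\]
The first sum is exactly the expression defining $D(s)$ with outer index equal to $j$, hence bounded by $D(s)$; the second is bounded by $\dv(s)$, using $|v_k(s)-v_j(s)|\leq\dv(s)$ together with $\widetilde\psi_{jk}(s)\leq 1$ and the fact that there are $N-1$ terms. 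This yields the pointwise bound $|\dot v_j(s)|\leq D(s)+\dv(s)$ for $s>0$.

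For the part $s\leq 0$, which only contributes when $t-\tau_{ij}(t)<0$, I would invoke the $L_{\solv}^0$-Lipschitz continuity of the initial datum to get $|v_j(0)-v_j(t-\tau_{ij}(t))|\leq L_{\solv}^0\,[t-\tau_{ij}(t)]^-$. Combining the two parts produces, for each $j$,
\[
   |\wvji - v_j| &\leq& L_{\solv}^0\,[t-\tau_{ij}(t)]^- + \int_{[t-\tau_{ij}(t)]^+}^t \bigl(D(s) + \dv(s)\bigr)\,\d s.
\]
Since $\tau_{ij}(t)\leq\otau(t)$ and both $D$ and $\dv$ are nonnegative, I would enlarge the negative-part term and the integration interval by replacing $\tau_{ij}(t)$ with $\otau(t)$, after which the right-hand side no longer depends on $j$. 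Averaging over $j\neq i$ with the weights $\widetilde\psi_{ij}/(N-1)$, whose sum is at most $1$ because $\widetilde\psi_{ij}\leq 1$, collapses the prefactor and delivers exactly \eqref{D}.

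The integral estimates are routine; the one point requiring care is the self-referential structure, namely recognizing that $\frac{1}{N-1}\sum_{k\neq j}\widetilde\psi_{jk}\,|\wv{k}{j}-v_k|$ is bounded by $D$ itself, which is what turns a plain bound on $|\dot v_j|$ into the integral-recursive inequality that will later drive the flocking argument. The secondary subtlety is the bookkeeping of the positive and negative parts at the interface $s=0$, where the evolution switches from the prescribed initial data to the dynamics \eqref{eq:CS2}.
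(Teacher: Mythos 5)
Your proposal is correct and follows essentially the same route as the paper's proof: the fundamental theorem of calculus applied to each $v_j$ over its delay window, the split at $s=0$ with the Lipschitz bound $L_{\solv}^0\,[\,\cdot\,]^-$ on the initial-datum part, the triangle-inequality decomposition $\wv{k}{j}-v_j = (\wv{k}{j}-v_k)+(v_k-v_j)$ yielding $|\dot v_j(s)|\leq D(s)+\dv(s)$, and the final averaging with $\frac{1}{N-1}\sum_{j\neq i}\widetilde\psi_{ij}\leq 1$. The only (immaterial) difference is that you enlarge $\tau_{ij}(t)$ to $\otau(t)$ at the end, whereas the paper does so at the outset in the integration limits.
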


\begin{proof}
For any $i, j\in [N]$ with $i\neq j$ and $t\geq 0$ we have,
\[
   |\wvji - v_j| = |v_j(t-\tau_{ij}) - v_j(t)| &\leq& \int_{t-\tau_{ij}}^t \left| \dot v_j(s) \right| \d s  \\
       &\leq& \int_{t-\otau}^{[t-\otau]^+} \left| \dot v_j(s) \right| \d s + \int_{[t-\otau]^+}^t \left| \dot v_j(s) \right| \d s.
\]
For the first integral of the right-hand side we have
\[
    \int_{t-\otau}^{[t-\otau]^+} \left| \dot v_j(s) \right| \d s \leq L_{\solv}^0 \, [t-\otau]^-,
\]
while for the second we use \eqref{eq:CS2},
\[
    \int_{[t-\otau]^+}^t \left| \dot v_j(s) \right| \d s
      &\leq& \frac{1}{N-1} \int_{[t-\otau]^+}^t \sum_{\ell\neq j} \widetilde{\psi}_{j\ell} \left| \wtv^j_\ell(s) - v_j(s) \right| \d s \\
      &\leq& \frac{1}{N-1} \int_{[t-\otau]^+}^t \sum_{\ell\neq j} \widetilde{\psi}_{j\ell}  \left( \left| \wtv^j_\ell(s) - v_\ell(s) \right| + \left| v_\ell(s) - v_j(s) \right| \right) \d s \\
      &\leq&  \int_{[t-\otau]^+}^t D(s) \d s +  \int_{[t-\otau]^+}^t \dv(s) \d s,
\]
where we used the estimate $\frac{1}{N-1} \sum_{\ell\neq j} \widetilde{\psi}_{j\ell} \leq 1$ implied by universal bound $\psi\leq 1$.
Consequently,
\[
      D(t) &=&  \max_{i\in [N]} \frac{1}{N-1}  \sum_{j\neq i}  \widetilde\psi_{ij} |\wvji - v_j|   \\
      &\leq& \max_{i\in [N]} \frac{1}{N-1}  \sum_{j\neq i}  \widetilde\psi_{ij} \left( L_{\solv}^0 \, [t-\otau]^- + \int_{[t-\otau]^+}^t D(s) \d s +  \int_{[t-\otau]^+}^t \dv(s) \d s  \right)  \\
      &\leq& L_{\solv}^0 \, [t-\otau(t)]^- + \int_{[t-\otau]^+}^t D(s) \d s +  \int_{[t-\otau]^+}^t \dv(s) \d s.
\]
\end{proof}

\begin{lemma}  \label{lem:crazy}
Fix $\dx(0)$, $\dv(0)$, $L_{\solv}^0 \geq 0$, $\s>0$, and $\bb>0$ provided by assumption \eqref{ass:bb}.
Assume that there exist $\c>\s$, $\cc> D(0)$ and $\dd > \dv(0)$ such that   
\(   \label{crazy:1}
   L_{\solv}^0\taust + (\cc+\dd) \frac{e^{\bb\taust} - 1}{\bb} \leq \cc \qquad\mbox{with } \taust:= (\c - \s)^{-1} \left( \dx(0) + \frac{\dd}{\bb} \right),
\)
and
\(   \label{crazy:2}
       \frac{2\cc}{\psist-\bb} \leq \dd - \dv(0)
     \qquad\mbox{with }
        \psist := \psi(\c\taust) > \eta.
\)
Then
\(   \label{claim:crazy}
   \dv(t) < \dd e^{-\bb t} \mbox{ and } D(t) < \cc e^{-\bb t}
\)
for all $t>0$.
\end{lemma}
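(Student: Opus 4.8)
The plan is to run a continuity (bootstrap) argument on the maximal interval on which both claimed bounds hold. Since $\cc>D(0)$ and $\dd>\dv(0)$ by hypothesis, the two inequalities in \eqref{claim:crazy} hold strictly at $t=0$, and hence, by continuity of $t\mapsto\dv(t)$ and $t\mapsto D(t)$, on a maximal half-open interval $[0,T^\ast)$. I would assume for contradiction that $T^\ast<\infty$; then at least one of the two inequalities degenerates into an equality at $t=T^\ast$, while both hold non-strictly on $[0,T^\ast]$. The goal is to show that in fact \emph{both} inequalities remain strict at $T^\ast$, contradicting the maximality of $T^\ast$ and forcing $T^\ast=\infty$.

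First I would harvest the a priori information the bootstrap assumption provides on $[0,T^\ast]$. Integrating $\dot\dx\le\dv<\dd e^{-\bb s}$ gives $\dx(t)\le\dx(0)+\tfrac{\dd}{\bb}\bigl(1-e^{-\bb t}\bigr)\le\dx(0)+\tfrac{\dd}{\bb}$, so Lemma \ref{lem:tauij} yields the delay bound $\otau(t)\le\taust$ with $\taust$ as in \eqref{crazy:1}. Since $|\wxji-x_i|=\c\tau_{ij}\le\c\taust$ and $\psi$ is nonincreasing, \eqref{def:upsi} gives the lower bound $\upsi(t)\ge\psi(\c\taust)=\psist$.

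Next I would sharpen the velocity estimate. Feeding $\upsi\ge\psist$, $\frac{N}{N-1}\ge1$ and $D(s)\le\cc e^{-\bb s}$ into \eqref{claim:shrinkage} gives the scalar differential inequality $\dot\dv\le-\psist\,\dv+2\cc e^{-\bb t}$ for a.e.\ $t\in[0,T^\ast]$. Multiplying by the integrating factor $e^{\psist t}$, integrating (legitimate since $\dv$ is Lipschitz, hence absolutely continuous), and using $\psist>\bb$ (forced by \eqref{crazy:2}) together with $e^{-\psist t}<e^{-\bb t}$ for $t>0$, I obtain
\[
   \dv(T^\ast)\le\dv(0)e^{-\psist T^\ast}+\frac{2\cc}{\psist-\bb}\bigl(e^{-\bb T^\ast}-e^{-\psist T^\ast}\bigr)
   <\Bigl(\dv(0)+\frac{2\cc}{\psist-\bb}\Bigr)e^{-\bb T^\ast}\le\dd\,e^{-\bb T^\ast},
\]
the last step being exactly \eqref{crazy:2} and the strict inequality coming from $e^{-\psist T^\ast}>0$. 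Thus the first bound in \eqref{claim:crazy} is strict at $T^\ast$.

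Finally, the $D$-estimate is where the real work lies. Inserting $D(s)\le\cc e^{-\bb s}$ and $\dv(s)\le\dd e^{-\bb s}$ into Lemma \ref{lem:D} and evaluating the elementary integral with $\otau(t)\le\taust$ yields
\[
   D(t)\le L_{\solv}^0\,[t-\otau(t)]^- + (\cc+\dd)\,\frac{e^{\bb\taust}-1}{\bb}\,e^{-\bb t}.
\]
For $t\ge\taust$ the delay term vanishes and \eqref{crazy:1} gives $D(t)\le(\cc-L_{\solv}^0\taust)e^{-\bb t}\le\cc e^{-\bb t}$, strictly. The delicate regime is $0<t<\taust$, where the contribution $L_{\solv}^0[t-\otau(t)]^-$ of the delay reaching into the initial datum is genuinely present and carries no obvious exponential weight. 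Here I would invoke the sharper, time-dependent delay bound $\otau(t)\le\taust-\tfrac{\dd}{\bb(\c-\s)}e^{-\bb t}$ (which follows from the same integration of $\dx$) to control $[t-\otau(t)]^-$ on $(0,\taust)$, and show that the exact balance encoded in \eqref{crazy:1} is precisely what keeps $D(t)$ below $\cc e^{-\bb t}$. This is the main obstacle, since it requires the slack left in \eqref{crazy:1} to absorb the non-decaying initial-data term; note that in the constant-initial-datum setting of Theorem \ref{thm:flocking} one has $L_{\solv}^0=0$, the delay term is absent, and the $D$-estimate closes immediately. Granting strictness of both bounds at $T^\ast$, the maximality of $T^\ast$ is contradicted, so $T^\ast=\infty$ and \eqref{claim:crazy} holds for all $t>0$.
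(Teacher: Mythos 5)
Your overall architecture coincides with the paper's proof: a continuity/bootstrap argument on the maximal interval where both bounds hold, the a priori estimates $\dx(t)\leq\dx(0)+\dd/\bb$, $\otau(t)\leq\taust$ and $\upsi(t)\geq\psist$, and the treatment of $\dv$ by feeding $D(s)<\cc e^{-\bb s}$ into Lemma \ref{lem:shrinkage} and integrating, with \eqref{crazy:2} excluding the velocity alternative. That half of your argument is complete and correct.

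However, there is a genuine gap exactly where you flag ``the main obstacle'': the $D$-estimate on $0<t<\taust$ is never actually carried out. You only announce a plan (a sharper, time-dependent bound on $\otau$) and assert that ``the exact balance encoded in \eqref{crazy:1}'' should close the argument, without showing how; as stated, your proposal proves the lemma only in the special case $L_{\solv}^0=0$, which is weaker than the claim. The paper closes this step with an elementary observation that needs neither a case distinction in $t$ nor any refined delay bound: \eqref{crazy:1} implies $(\cc+\dd)\frac{e^{\bb\taust}-1}{\bb}\leq\cc$, hence $e^{\bb\taust}<1+\bb$ and so $\taust<1$; moreover \eqref{crazy:2} together with $\psi\leq 1$ forces $\bb<\psist\leq 1$. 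Therefore $\bb(\taust-t)<1$ on $[0,\taust]$, so $(\taust-t)e^{\bb t}$ is nonincreasing there, which gives the pointwise inequality $[t-\taust]^-e^{\bb t}\leq\taust$ for \emph{all} $t\geq 0$. Since $\otau(t)\leq\taust$ implies $[t-\otau(t)]^-\leq[t-\taust]^-$, the troublesome term obeys $L_{\solv}^0\,[t-\otau(t)]^-\leq L_{\solv}^0\taust\, e^{-\bb t}$, i.e.\ it \emph{does} carry the exponential weight you thought was missing; adding $(\cc+\dd)\frac{e^{\bb\taust}-1}{\bb}e^{-\bb t}$ and invoking \eqref{crazy:1} — whose left-hand side contains the term $L_{\solv}^0\taust$ precisely for this purpose — yields $D(t)<\cc e^{-\bb t}$ up to the putative blow-up time (strictness coming from the strict bootstrap bounds inserted into the integrals of Lemma \ref{lem:D}), which excludes the $D$-alternative. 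Until you execute this (or an equivalent) computation, the contradiction at $T^\ast$ is not established and the proof is incomplete.
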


\begin{proof}
Let us define the set
\(   \label{setS}
   \mathcal{S} := \left\{ t>0; \; D(t) < \cc e^{-\bb t} \quad\mbox{and}\quad \dv(t) < \dd e^{-\bb t} \right\}.
\)
Since, by assumption, $\cc>D(0)$ and $\dd > \dv(0)$, the set $\mathcal{S}$ is nonempty, so that we may define
$T:=\sup\mathcal{S}$.
We aim at showing that $T=+\infty$. For contradiction, let us assume that $T<+\infty$.
By continuity of the functions $\dv=\dv(t)$ and $D=D(t)$ we then have
\(  \label{for_contr}
   D(T) =  \cc e^{-\bb T} \qquad\mbox{or}\qquad  \dv(T) = \dd e^{-\bb T}.
\)
By the definition \eqref{dXdV} of the diameters $\dx$ and $\dv$ we have
\[
    \dx(t) \leq \dx(0) + \int_0^t \dv(s) \d s, 
\]
and from the definition \eqref{setS} of the set $\mathcal{S}$ it follows for $t<T$,
\[
   \int_0^t \dv(s) \d s < \dd \int_0^t e^{-\bb s} \d s < \frac{\dd}{\bb},
\]
so that
\(   \label{est:dx}
   \dx(t) < \dx(0) + \frac{\dd}{\bb}.
\)
An application of Lemma \eqref{lem:tauij} then gives
\(   \label{est:otau}
   \otau(t) = \max_{i,j\in [N]} \tau_{ij}(t) \leq \frac{\dx(t)}{\c-\s} < \frac{1}{\c-\s} \left( \dx(0) + \frac{\dd}{\bb} \right) = \taust
\)
for $t<T$.

Using \eqref{D} and \eqref{setS}, we have for $t\leq T$,
\[
   D(t) &\leq& L_{\solv}^0 \, [t-\otau(t)]^- + \int_{[t-\otau(t)]^+}^t D(s) \d s + \int_{[t-\otau(t)]^+}^t \dv(s) \d s \\
      &<& L_{\solv}^0 \, [t-\taust]^- + (\cc  + \dd) \int_{[t-\taust]^+}^t e^{-\bb s} \d s  \\
      &\leq& \left( L_{\solv}^0 \, [t-\taust]^- e^{\bb t} + (\cc + \dd) \frac{e^{\bb\taust} - 1}{\bb}  \right) e^{-\bb t}.
\]
An elementary calculation, using the fact that $\eta<1$ and realizing that \eqref{crazy:1} implies $\taust<1$, gives
\[
   [t-\taust]^- e^{\bb t} \leq \taust \qquad\mbox{for all } t\geq 0.
\]
Consequently,
\[
   D(T) < \left( L_{\solv}^0 \, \taust + (\cc + \dd) \frac{e^{\bb\taust} - 1}{\bb}  \right) e^{-\bb T},
\]
and assumption \eqref{crazy:1} gives $D(T) < \cc e^{-\bb T}$,
so that the first alternative in \eqref{for_contr} is excluded.

To exclude the second alternative in \eqref{for_contr}, we recall that $\widetilde\psi_{ij}(t) = \psi(|\wxji - x_i|)$
and apply the triangle inequality, \eqref{est:wxjixj} and \eqref{est:tauij}, which yields
\[
   |\wxji - x_i| \leq |\wxji - x_j| + |x_i - x_j| \leq \s \tau_{ij} + \dx(t) \leq \frac{\c}{\c-\s} \dx(t).
\]
Since, by assumption, $\psi$ is a non-increasing function, we have for all $t\geq 0$,
\[
    \widetilde\psi_{ij}(t) \geq \psi\left( \frac{\c}{\c-\s} \dx(t) \right).
\]
Combining this with \eqref{est:dx}, we obtain $\widetilde\psi_{ij}(t) \geq \psist$ for all $t<T$ and all $i,j\in [N]$,
with $\psist$ defined in \eqref{crazy:2}.
Then, Lemma \ref{lem:shrinkage} gives
\[
   \tot{}{t} \dv(t) &\leq&  - \frac{N}{N-1}\psist \dv(t) + 2D(t) \\
      &<&  - \psist \dv(t) + 2 \cc e^{-\bb t}
\]
for almost all $t<T$. An integration on the interval $(0,T)$ then yields
\[
   \dv(T) &\leq& \dv(0) e^{-\psist T} + 2 \cc \frac{e^{-\bb T} - e^{-\psist T}}{\psist - \bb} \\
      &<& \left( \dv(0) +  \frac{2\cc}{\psist-\bb} \right) e^{-\bb T},
\]
where we used the fact that $\bb<\psist$ in the second inequality.
Assumption \eqref{crazy:2} gives $\dv(T) < \dd e^{-\bb T}$,
and, consequently, the second alternative in \eqref{for_contr} is excluded.

We conclude that $T=+\infty$ which finishes the proof.
\end{proof}

Obviously, \eqref{claim:crazy} implies flocking in the sense of Definition \ref{def:flocking},
noting that the uniform boundedness of $\dx=\dx(t)$ follows from
\[
    \dx(t) \leq \dx(0) + \int_0^t \dv(s) \d s < \dx(0) + \dd \int_0^t e^{-\bb s} \d s < \dx(0) + \frac{\dd}{\bb}
    \qquad\mbox{for all } t\geq 0.
\]

Clearly, the crucial step of the proof of Theorem \ref{thm:flocking}
is to find values of $\c>\s$, $\dd>\dv(0)$ and $\cc>D(0)$
such that the assumptions of Lemma \ref{lem:crazy} are satisfied.
This essentially means to resolve the nonlinear algebraic system \eqref{crazy:1}--\eqref{crazy:2}.
We were only able to find an analytical solution under the additional assumption
that the initial velocity trajectories $\solv^0$ are constant on $(-\infty,0]$.
Then $L_{\solv}^0 = 0$ and $D(0)=0$, which facilitates the following result.

\begin{lemma} \label{lem:notsocrazy}
Assume that $L_{\solv}^0 = 0$ and $D(0)=0$.
For any fixed $\dx(0)$, $\dv(0)\geq 0$, $\s>0$, and $\bb>0$
provided by assumption \eqref{ass:bb}, there exist $\cst>\s$,
$\dd>\dv(0)$ and $\cc>0$ such that for any $\c\geq \cst$ 
the conditions \eqref{crazy:1} and \eqref{crazy:2} are verified.
\end{lemma}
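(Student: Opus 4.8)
The plan is to exploit the fact that as the propagation speed $\c\to\infty$ the maximal delay $\taust\to 0$, which turns condition \eqref{crazy:1} into a void constraint, while condition \eqref{crazy:2} stays nontrivial but remains satisfiable thanks to the standing assumption \eqref{ass:bb}. Since $L_{\solv}^0=0$, condition \eqref{crazy:1} reads $(\cc+\dd)\frac{e^{\bb\taust}-1}{\bb}\leq\cc$; writing $A:=\frac{e^{\bb\taust}-1}{\bb}\geq 0$, this is equivalent, for $A<1$, to the \emph{lower} bound $\cc\geq\frac{\dd A}{1-A}=:\underline\cc(\c)$. Condition \eqref{crazy:2} is the \emph{upper} bound $\cc\leq\frac{1}{2}(\dd-\dv(0))(\psist-\bb)=:\overline\cc(\c)$. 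Thus the entire task reduces to producing fixed $\dd>\dv(0)$ and a threshold $\cst$ for which the interval $\left[\underline\cc(\c),\overline\cc(\c)\right]$ is nonempty and positive for every $\c\geq\cst$, and then selecting any $\cc$ inside it.

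First I would freeze $\dd$. Since $\psi$ is continuous and \eqref{ass:bb} holds with strict inequality, I choose $\dd>\dv(0)$ close enough to $\dv(0)$ that $\psi\left(\dx(0)+\frac{\dd}{\bb}\right)>\bb$; denote $q:=\dx(0)+\frac{\dd}{\bb}$. With $\dd$ now fixed I track the $\c$-dependence of $\taust=(\c-\s)^{-1}q$ and of $\c\taust=\frac{\c}{\c-\s}q$. As $\c$ grows, $\frac{\c}{\c-\s}$ decreases to $1$, so $\c\taust\downarrow q$; since $\psi$ is nonincreasing and continuous, $\psist=\psi(\c\taust)$ is nondecreasing in $\c$ with limit $\psi(q)>\bb$. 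Hence there is $\c_1>\s$ with $\psist>\bb$ for all $\c\geq\c_1$, and $\overline\cc(\c)$ is nondecreasing in $\c$ with positive limit $\frac{1}{2}(\dd-\dv(0))(\psi(q)-\bb)$.

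Next I would control the lower bound. As $\c\to\infty$ we have $\taust\to 0$, hence $A\to 0$ and $\underline\cc(\c)\to 0$; moreover $x\mapsto\frac{x}{1-x}$ is increasing on $[0,1)$ and $A$ decreases in $\c$, so $\underline\cc(\c)$ is nonincreasing. Combining the two monotonicities there is $\c_2$ such that for $\c\geq\c_2$ one has $A<1$ and $\underline\cc(\c)<\overline\cc(\c)$. Setting $\cst:=\max\{\c_1,\c_2\}$ and choosing $\cc$ strictly between $\underline\cc(\cst)$ and $\overline\cc(\cst)$ (for instance the midpoint, which is positive as $\underline\cc\geq 0$), both \eqref{crazy:1} and \eqref{crazy:2} hold at $\c=\cst$. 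Finally, because $\underline\cc(\c)$ only decreases and $\overline\cc(\c)$ only increases as $\c$ grows, the \emph{same} pair $(\dd,\cc)$ keeps $\underline\cc(\c)\leq\underline\cc(\cst)<\cc<\overline\cc(\cst)\leq\overline\cc(\c)$ for every $\c\geq\cst$, which is exactly the assertion.

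I expect the only genuinely delicate point to be the joint choice of $\dd$ and $\cst$: $\dd$ must be large enough to open the strict gap $\dd-\dv(0)>0$ needed in \eqref{crazy:2}, yet small enough that $q=\dx(0)+\frac{\dd}{\bb}$ stays in the region where $\psi(q)>\bb$, so that $\psist$ does not drop to $\bb$. The continuity of $\psi$ combined with the strictness in \eqref{ass:bb} is precisely what reconciles these two demands; everything else is the elementary limit-and-monotonicity bookkeeping sketched above.
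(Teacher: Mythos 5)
Your proposal is correct, and its core reduction coincides with the paper's: both read \eqref{crazy:1} (with $L_{\solv}^0=0$) as a lower bound $\cc\geq\underline{\cc}(\c)=\dd A/(1-A)$ and \eqref{crazy:2} as an upper bound $\cc\leq\overline{\cc}(\c)$, both fix $\dd>\dv(0)$ by continuity of $\psi$ plus the strictness in \eqref{ass:bb}, and both exploit exactly the monotonicity you identify (namely that $(\c-\s)^{-1}$ and $\c/(\c-\s)$ decrease in $\c$) to propagate the two conditions from $\c=\cst$ to every $\c\geq\cst$. Where you genuinely differ is in how the threshold is produced. The paper is fully constructive: it inflates the argument of $\psi$ by a factor $(1+\eps)$ (condition \eqref{crazy:4}), which yields a lower bound on $\psist$ uniform over all $\c\geq\frac{1+\eps}{\eps}\s$; it then fixes $\cc$ in closed form by \eqref{crazy:5} — your $\overline{\cc}$ with $\psist$ replaced by the worst-case value $\psi\left((1+\eps)q\right)$ — obtains $\c_1$ by solving \eqref{crazy:3}, which is precisely your equation $\underline{\cc}(\c_1)=\cc$, and sets $\cst=\max\left\{\c_1,\frac{1+\eps}{\eps}\s\right\}$. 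You instead send $\c\to\infty$, observe that $\underline{\cc}(\c)\to 0$ while $\overline{\cc}(\c)$ increases to the positive limit $\frac12(\dd-\dv(0))\left(\psi(q)-\bb\right)$, and take $\cst$ beyond the resulting (unspecified) crossing point. Your route is more economical — no $\eps$, no equation to solve — and it fully proves the lemma, which only asserts existence; what it gives up is the explicit computability of $\cst$, which is exactly what Theorem \ref{thm:flocking} advertises ($\cst$ ``calculable from the values of $\dx(0)$, $\dv(0)$, $\s$ and $\bb$'') and what Remark \ref{rem:v0} calls the recipe provided by this proof. If you want to recover that feature, quantify your crossing: for $\c\geq\frac{1+\eps}{\eps}\s$ one has $\frac{\c}{\c-\s}\leq 1+\eps$, hence $\overline{\cc}(\c)\geq\frac12(\dd-\dv(0))\left(\psi((1+\eps)q)-\bb\right)$, and solving $\underline{\cc}(\c)=\cc$ for this fixed $\cc$ reproduces the paper's \eqref{crazy:3}--\eqref{cst}.
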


\begin{proof}
Assumption \eqref{ass:bb} and continuity of $\psi$ implies that, for fixed $\bb>0$, there exist
$\eps>0$ and $\dd > \dv(0)$ such that
\(   \label{crazy:4}
   \psi\left( (1+\eps) \left(\dx(0) + \frac{\dd}{\bb} \right) \right) > \bb.
\)
We set
\(   \label{crazy:5}
   \cc := \frac{\dd-\dv(0)}{2} \left[ \psi\left( (1+\eps) \left(\dx(0) + \frac{\dd}{\bb} \right) \right) - \bb \right].
\)
Now, with the values of $\s>0$, $\bb>0$, $\dd > \dv(0)$ and $\cc>0$ fixed, the equation
\(   \label{crazy:3}
   \frac{1}{\bb} \ln\left( \frac{\bb\cc}{\cc+\dd} + 1 \right) = \frac{1}{\c-\s} \left(\dx(0) + \frac{\dd}{\bb} \right)
\)
is uniquely solvable in $\c>\s$, since $\frac{1}{\c-\s}$ is a monotonically decreasing function of $\c>\s$
with values in $(0,\infty)$.
We denote this unique solution $\c_1$ and set
\(   \label{cst}
   \cst := \max\left\{ \c_1, \frac{1+\eps}{\eps} \s \right\}.
\)
We claim that the above choice of $\dd$, $\cc$ and $\cst$
verifies the assumptions of Lemma \ref{lem:crazy},
in particular, the conditions \eqref{crazy:1} and \eqref{crazy:2}.
Indeed, recalling the definition of $\taust$ given by \eqref{crazy:1},
\(   \label{taust}
   \taust = \frac{1}{\cst - \s} \left( \dx(0) + \frac{\dd}{\bb} \right),
\)
equation \eqref{crazy:3} gives
\[
    \frac{1}{\bb} \ln\left( \frac{\bb\cc}{\cc+\dd} + 1 \right) \leq \taust,
\]
which is equivalent to
\[
    (\cc+\dd) \frac{e^{\bb\taust} - 1}{\bb} \leq \cc. 
\]
This proves \eqref{crazy:1} with $L_{\solv}^0=0$.

By \eqref{cst} we have $\frac{\cst}{\cst-\s} \leq 1+\eps$, which implies
\[
   \cst \taust  
      \leq (1+\eps) \left( \dx(0) + \frac{\dd}{\bb} \right).
\]
Due to the monotonicity of $\psi$, we then have for $\psist := \psi(\cst \taust)$
\[
   \psist \geq \psi\left( (1+\eps) \left( \dx(0) + \frac{\dd}{\bb} \right) \right),
\]
and, due to \eqref{crazy:4}, $\psist > \bb$.
Moreover,
\[
    \frac{2\cc}{\psist-\bb} \leq  2\cc \left[ \psi\left( (1+\eps) \left(\dx(0) + \frac{\dd}{\bb} \right) \right) - \bb \right]^{-1},
\]
and substituting for $\cc$ from \eqref{crazy:5} gives
\[
    \frac{2\cc}{\psist-\bb} \leq \dd - \s,
\]
which is \eqref{crazy:2}.

Finally, let us note that for any $\c\geq \cst$ we have
\[
      \frac{1}{\c - \s} \left( \dx(0) + \frac{\dd}{\bb} \right) \leq \taust.
\]
Consequently, \eqref{crazy:1} remains valid with $\cst$ replaced by $\c$.
Moreover,
\[
   \c \tau := \frac{\c}{\c - \s} \left( \dx(0) + \frac{\dd}{\bb} \right) \leq \frac{\cst}{\cst - \s} \left( \dx(0) + \frac{\dd}{\bb} \right) = \cst\taust,
\]
since $\frac{\c}{\c - \s}$ is a decreasing function of $\c>\s$.
Consequently, $\psi(\c\tau) \geq \psist$ and \eqref{crazy:2} keeps its validity as well.  
\end{proof}

\begin{remark} \label{rem:v0}
Note that for a given influence function $\psi$ and values of $\bb$, $\dv(0$), $\dx(0)$ and $\s$,
the proof of Lemma \ref{lem:notsocrazy} provides a recipe for calculating the value of $\cst$ explicitly
(although it may not provide the optimal value).
On the other hand, admitting nonconstant initial datum $\solv^0$
makes it necessary to decide the solvability of the algebraic system \eqref{crazy:1}--\eqref{crazy:2},
which does not seem to be achievable analytically.
However, the problem is well approachable by numerical methods.
\end{remark}

Finally, we provide the proof of Proposition \ref{corr:CS}.
Clearly, \eqref{ass:bb} is satisfiable whenever $\inf_{s\in(0,\infty)} \psi(s) > 0$.
Therefore, let us consider $\psi=\psi(s)$ such that $\lim_{s\to\infty} \psi(s) = 0$.
Moreover, recall that we assume $\psi=\psi(s)$ to be monotone
(otherwise we can replace it by its nonincreasing rearrangement).
Assumption \eqref{corr:CS:cond} implies that $\psi(s) \geq c s^\alpha$  for some constant $c>0$ and all $s>0$ large enough.
Denoting $\Psi(s):=c s^\alpha$, we readily calculate for $\alpha>-1$,
\[
   \lim_{\bb\to 0+}  \Psi\left(\dx(0) + \frac{\dv(0)}{\bb}\right) - \bb = 0,
   \qquad
   \tot{}{\bb} \left[ \Psi\left(\dx(0) + \frac{\dv(0)}{\bb}\right) - \bb \right]_{\bb=0+} = -\infty.
\]
Consequently, for $\bb>0$ small enough
\[
    \psi\left(\dx(0) + \frac{\dv(0)}{\bb}\right) - \bb \geq  \Psi\left(\dx(0) + \frac{\dv(0)}{\bb}\right) - \bb > 0,
\]
and the claim of Proposition \ref{corr:CS} follows.

\section{Mean field limit} \label{sec:MF}

Let us fix $T>0$ and $0<\s<\c$ and recall the definition \eqref{def:OmegasT} of the set
\[   
   \Omega_\s^T := 
      \Bigl\{ \gamma\in C_b^1((-\infty, T];\R^d) \cap C_\s((-\infty,T]; \R^d); \;
         \dot\gamma|_{[0,T]} \in C_{2\s}([0,T]; \R^d) \Bigr\},
\]
equipped with the norm $\Norm{\cdot}_{\Omega_\s^T}$ defined in \eqref{def:norm},
\[
   \Norm{\gamma}_{\Omega_\s^T}:= \Norm{\gamma}_{L^\infty(-\infty,T)} + \Norm{\dot \gamma}_{L^\infty(0,T)} \qquad
   \mbox{for } \gamma\in \Omega_\s^T.
\]
Obviously, due to the uniform boundedness of $\gamma\in\Omega_\s^T$ on $(-\infty,T)$ and the uniform Lipschitz continuity of $\dot\gamma$ on $(0,T)$,
we have $\Norm{\gamma}_{\Omega_\s^T} < +\infty$ for all $\gamma\in\Omega_\s^T$.
We also recall that $\POsT$ denotes the space of probability measures on $\Omega_\s^T$
with finite first-order moment
and it is equipped with the Monge--Kantorovich--Rubinstein distance $\mathcal{W}_T$
defined in \eqref{def:MKR}.
For any $\rho$, $\nu\in\POsT$ we have $\mathcal{W}_T(\rho,\nu) < +\infty$ since
\[
      \mathcal{W}_T(\rho,\nu) &\leq& \iint_{\Omega_\s^T \times \Omega_\s^T} \Norm{\gamma-\xi}_{\Omega_\s^T} \d\rho(\gamma)\d\nu(\xi) \\
         &\leq& \int_{\Omega_\s^T}  \Norm{\gamma}_{\Omega_\s^T} \d\rho(\gamma) + \int_{\Omega_\s^T} \Norm{\xi}_{\Omega_\s^T} \d\nu(\xi),
\]
and the measures $\rho$, $\nu\in\POsT$ have, by definition, finite first-order moment.
The set $\POsT$ endowed with the distance $\mathcal{W}_T$ is a complete metric space.

We consider the problem
\(
   \rho &=& \mbox{law}(x), \label{eq:MF0} \\
    x(t) &=& x(0) + \int_0^t v(s) \d s, \label{eq:MF1} \\
    v(t) &=& v(0) + \int_0^t F_s[\rho](x(s),v(s)) \d s   \label{eq:MF2}
\)
with the operator $F_t[\rho]: \R^d\times\R^d \to \R^d$ given by \eqref{def:F}, i.e.,
\[
   F_t[\rho](x,v) := \int_{\Omega^T_\s} \psi\left( \left|\Gamma_{t,x}[\gamma] - x \right|\right) \left(\Pi_{t,x}[\gamma] - v \right)  \d\rho(\gamma).
\]
Note that $F_t[\rho]$ is well defined since with the universal bound $\psi\leq 1$ and $\rho\in\POsT$ we have
\[
   \left| \int_{\Omega^T_\s} \psi\left( \left|\Gamma_{t,x}[\gamma] - x \right|\right) \left(\Pi_{t,x}[\gamma] - v \right)  \d\rho(\gamma)  \right|
   \leq
   \int_{\Omega^T_\s}  \left( \left| \Pi_{t,x}[\gamma] \right| + |v|  \right) \d\rho(\gamma) \leq \s + |v|.
\]
We recall that the initial datum $\rho^0\in \mathcal{P}(\Omega_\s^0)$ for the mean-field problem
is imposed in terms of the push-forward identity $\mathbb{I}\#\rho = \rho^0$, 
with the mapping $\mathbb{I}: \Omega_\s^T \to \Omega_\s^0$ given by \eqref{def:I}.
Then, $x(0)$ and, resp., $v(0)$ in \eqref{eq:MF1}-\eqref{eq:MF2} are distributed according to $X_0\#\rho^0$
and, resp., $V_0\#\rho^0$.

\subsection{Auxiliary results}

For any $\rho\in \POsT$ we define the mapping $Z[\rho]: \Omega_\s^0 \to C^1((-\infty,T])$,
\(   \label{def:Z}
   Z[\rho]: \gamma^0 \mapsto \gamma,
\)
where $\gamma\in C^1((-\infty,T])$ is identical to $\gamma^0$ on $(-\infty,0]$
and on $(0,T]$ it is the unique solution $x=x(t)$ of the system \eqref{eq:MF1}--\eqref{eq:MF2}
subject to the initial datum $x(0)=\gamma^0(0)$ and $v(0)=\dot\gamma^0(0)$.
The existence and uniqueness of solutions of \eqref{eq:MF1}--\eqref{eq:MF2}
is established by a slight modification of the arguments carried out in Section \ref{sec:ex}.

\begin{lemma}\label{lem:Z}
For any $T>0$ and $\rho\in \POsT$, the mapping $Z[\rho]$ given by \eqref{def:Z} maps the set $\Omega^0_\s$
into $\Omega^T_\s$.
\end{lemma}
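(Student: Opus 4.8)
The plan is to verify the three defining properties of $\Omega_\s^T$ in \eqref{def:OmegasT} for the extended curve $\gamma = Z[\rho](\gamma^0)$, taking for granted (as established by the modification of the arguments of Section \ref{sec:ex} quoted just above the lemma) that the solution $(x,v)$ of \eqref{eq:MF1}--\eqref{eq:MF2} with data $x(0)=\gamma^0(0)$, $v(0)=\dot\gamma^0(0)$ exists and is unique on $[0,T]$. First I observe that the drift $F_t[\rho]$ is well defined along this trajectory: since $\rho$ is concentrated on $\Omega_\s^T$, every $\gamma$ in its support is $\s$-Lipschitz with $\s<\c$, so Lemma \ref{lem:tau} supplies a unique $\tau_{t,x}[\gamma]$ solving \eqref{eq:tau_gamma}, and hence $\Gamma_{t,x}[\gamma]$ and $\Pi_{t,x}[\gamma]$ are well defined with the a priori bound $|\Pi_{t,x}[\gamma]| = |\dot\gamma(t-\tau_{t,x}[\gamma])| \leq \s$ inherited directly from $\gamma\in C_\s$.

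The heart of the proof is the velocity bound $|v(t)|\leq\s$ for $t\in[0,T]$, the mean-field counterpart of Lemma \ref{lem:Rvbound}. Since $v(0)=\dot\gamma^0(0)$ with $\gamma^0\in\Omega_\s^0\subset C_\s((-\infty,0])$, we have $|v(0)|\leq\s$. Differentiating and inserting \eqref{def:F} gives $\frac12\tot{}{t}|v(t)|^2 = v(t)\cdot F_t[\rho](x(t),v(t))$, i.e.
\begin{equation*}
v\cdot F_t[\rho](x,v) = \int_{\Omega_\s^T} \psi(|\Gamma_{t,x}[\gamma]-x|)\bigl(\Pi_{t,x}[\gamma]\cdot v - |v|^2\bigr)\,\d\rho(\gamma).
\end{equation*}
By Cauchy--Schwarz and $|\Pi_{t,x}[\gamma]|\leq\s$, the integrand is bounded by $\psi(\cdots)\,|v|(\s-|v|)$, which is nonpositive as soon as $|v|\geq\s$; as $\psi\geq 0$, the whole integral is then $\leq 0$. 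The key simplification relative to Lemma \ref{lem:Rvbound} is that the bound $|\Pi_{t,x}[\gamma]|\leq\s$ holds \emph{unconditionally}: it is furnished by the measure $\rho$ and does not refer to the unknown solution, so no delayed self-reference must be controlled. A standard first-exit argument — fix $\eps>0$ with $\s+\eps<\c$, suppose $|v|$ first attains $\s+\eps$ at some $T_0$, and deduce $\tot{}{t+}|v(T_0)|^2<0$ for a contradiction, then let $\eps\to 0$ — yields $|v(t)|\leq\s$ on $[0,T]$. Combined with $|\dot\gamma^0|\leq\s$ on $(-\infty,0]$ this gives $\gamma\in C_\s((-\infty,T])$.

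With the velocity bound established, the two remaining properties follow quickly. For the $2\s$-Lipschitz continuity of $\dot\gamma=v$ on $[0,T]$, I bound the drift using $\psi\leq1$, $|\Pi_{t,x}[\gamma]|\leq\s$ and $|v|\leq\s$:
\begin{equation*}
|F_t[\rho](x,v)| \leq \int_{\Omega_\s^T} \psi(|\Gamma_{t,x}[\gamma]-x|)\bigl(|\Pi_{t,x}[\gamma]|+|v|\bigr)\,\d\rho(\gamma) \leq 2\s,
\end{equation*}
so the integral form \eqref{eq:MF2} yields $|v(t_1)-v(t_2)|\leq 2\s|t_1-t_2|$ for $t_1,t_2\in[0,T]$, whence $\dot\gamma|_{[0,T]}\in C_{2\s}([0,T];\R^d)$ (and in particular $v$ is continuous). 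Finally, for $\gamma\in C_b^1((-\infty,T];\R^d)$: on $(-\infty,0]$ the curve coincides with $\gamma^0\in C_b^1$; on $[0,T]$ equation \eqref{eq:MF1} gives $\dot\gamma=v$ continuous, and the one-sided derivatives match at $t=0$ since $v(0)=\dot\gamma^0(0)$, so $\gamma$ is $C^1$ across $0$; boundedness follows from $|\gamma(t)|\leq|\gamma^0(0)|+\s T$ on $[0,T]$ together with the boundedness of $\gamma^0$ on $(-\infty,0]$. This places $\gamma$ in $\Omega_\s^T$ and completes the argument. I expect the velocity bound to be the only genuinely delicate step, though it is lighter here than in Lemma \ref{lem:Rvbound} precisely because $|\Pi_{t,x}[\gamma]|\leq\s$ is supplied externally by $\rho$.
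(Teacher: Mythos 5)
Your proof is correct and follows essentially the same route as the paper's: the decisive observations --- that $|\Pi_{t,x}[\gamma]|\leq\s$ holds unconditionally because $\rho$ is supported on $\Omega_\s^T$, that Cauchy--Schwarz applied to $\frac12\tot{}{t}|v(t)|^2 = F_t[\rho](x(t),v(t))\cdot v(t)$ yields the velocity bound $|v|\leq\s$, and that $|F_t[\rho](x(t),v(t))|\leq 2\s$ then yields the $2\s$-Lipschitz continuity of $\dot\gamma$ on $[0,T]$ --- are exactly the ones used in the paper. The only (cosmetic) difference is in how the velocity bound is closed: the paper derives the differential inequality $\tot{}{t}|v(t)|\leq\s-|v(t)|$ and concludes by comparison, while you run a first-exit contradiction in the style of Lemma \ref{lem:Rvbound}; be aware that at the exit time one can only assert $\tot{}{t+}|v(T_0)|^2\leq 0$ rather than the strict inequality you claim (strictness fails if $\psi$ vanishes along the relevant configurations), so the comparison form of the argument is the cleaner way to finish, though this is a standard repair and does not affect the substance of your proof.
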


\begin{proof}
Let us fix $\gamma^0\in \Omega^0_\s$ and denote $x:= Z[\rho](\gamma^0)$, $v:=\dot x$.
We first prove that $x$ is $\s$-Lipschitz continuous on $(-\infty, T]$.
On the interval $(-\infty,0]$ this follows directly from the definition.
For $t\geq 0$ we have from \eqref{eq:MF2}--\eqref{def:F},
\[
   \frac12 \tot{}{t} |v(t)|^2 &=& \int_{\Omega^T_\s} \psi\left( \left|\Gamma_{t,x(t)}[\gamma] - x(t) \right|\right) \left( \Pi_{t,x(t)}[\gamma] - v(t) \right) \cdot v(t) \,  \d\rho(\gamma)  \\
      &\leq& \int_{\Omega^T_\s} \left( \left| \Pi_{t,x(t)}[\gamma] \right| - |v(t)| \right)  |v(t)| \,  \d\rho(\gamma),
\]
where we used the universal bound $\psi\leq 1$.
Moreover, by the definition \eqref{def:OmegasT}, we have $|\Pi_{t,x}[\gamma]| = |\dot\gamma(t-\tau_{t,x}[\gamma])| \leq \s$
for all $t\in (-\infty,T]$ and $\gamma\in\Omega^T_\s$.
Consequently, for almost all $t>0$,
\[
   \tot{}{t} |v(t)| \leq \s - |v(t)|,
\]
with $|v(0)| \leq \s$.
This implies $|\dot x(t)| = |v(t)| \leq \s$ for all $t>0$. 
Since, by definition, $Z[\rho](\gamma^0) \equiv x$ on $(0,T]$,
we obtain the $\s$-Lipschitz continuity of $Z[\rho](\gamma^0)$ on the interval $[0,T]$.

Moreover, with the universal bound $\psi\leq 1$, $|v(t)| \leq \s$ and $\left| \Pi_{t,x}[\gamma] \right| \leq \s$, we have for $t\in [0,T]$
\[
    |\dot v(t)| = |F_s[\rho](x(t),v(t))|  \leq
         \int_{\Omega^T_\s} \psi\left( \left|\Gamma_{t,x}[\gamma] - x(t) \right|\right) \left| \Pi_{t,x}[\gamma] - v(t) \right|  \d\rho(\gamma)
         \leq 2\s.
\]
This gives the uniform $2\s$-Lipschitz continuity of $v= \tot{}{t} Z[\rho](\gamma^0)$ on $[0,T]$
and we finally conclude that $Z[\rho](\gamma^0)\in \Omega^T_\s$.
\end{proof}

\begin{lemma}  \label{lem:gamma-gamma}
For any $\gamma$, $\xi\in \Omega_\s^T$ and $x$, $y\in\R^d$, $t\in [0,T]$ we have
\(  \label{GammaGamma}
   \left|\Gamma_{t,x}[\gamma] - \Gamma_{t,y}[\xi] \right| \leq  
        \frac{\c}{\c-\s} \Bigl( \Norm{ \gamma - \xi }_{L^\infty(0,t)} + \s\c^{-1} |x-y|  \Bigr),
\)
where the mapping $\Gamma_{t,x}[\gamma]$ is defined in \eqref{def:Gamma}, and
\(  \label{PiPi}
   \left|\Pi_{t,x}[\gamma] - \Pi_{t,y}[\xi] \right| \leq  
      \Norm{ \dot\gamma - \dot\xi }_{L^\infty(0,t)}  +
        \frac{2\s}{\c-\s} \Bigl( \Norm{ \gamma - \xi }_{L^\infty(0,t)} + |x-y|  \Bigr),
\)
with $\Pi_{t,x}[\gamma]$ defined in \eqref{def:Pi}.
\end{lemma}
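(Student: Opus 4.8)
The plan is to estimate the two quantities by unwinding the definitions of $\Gamma_{t,x}$ and $\Pi_{t,x}$ and carefully controlling the difference of the two delays $\tau_{t,x}[\gamma]$ and $\tau_{t,y}[\xi]$. The whole difficulty is concentrated in this delay comparison, because the evaluation points $t-\tau_{t,x}[\gamma]$ and $t-\tau_{t,y}[\xi]$ differ, so one cannot simply subtract the curves pointwise. So the first order of business will be to derive a Lipschitz-type bound on $|\tau_{t,x}[\gamma] - \tau_{t,y}[\xi]|$ in terms of $\Norm{\gamma-\xi}_{L^\infty(0,t)}$ and $|x-y|$, exactly as was done for the discrete delays in the estimates \eqref{est:tautau}--\eqref{est:tautautau} of Lemma \ref{lem:local}.

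First I would abbreviate $\tau_\gamma := \tau_{t,x}[\gamma]$ and $\tau_\xi := \tau_{t,y}[\xi]$, so that by \eqref{eq:tau_gamma} we have $\c\tau_\gamma = |x - \gamma(t-\tau_\gamma)|$ and $\c\tau_\xi = |y - \xi(t-\tau_\xi)|$. Subtracting and using the reverse triangle inequality $\bigl| |a| - |b| \bigr| \leq |a-b|$ gives
\[
   \c\,|\tau_\gamma - \tau_\xi| \leq |x-y| + |\gamma(t-\tau_\gamma) - \xi(t-\tau_\xi)|.
\]
I would then split the last term by adding and subtracting $\xi(t-\tau_\gamma)$, bounding the first piece by $\Norm{\gamma-\xi}_{L^\infty(0,t)}$ and the second by $\s|\tau_\gamma-\tau_\xi|$ using the $\s$-Lipschitz continuity of $\xi$. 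This absorbs an $\s|\tau_\gamma-\tau_\xi|$ term into the left-hand side, yielding after rearrangement
\[
   |\tau_\gamma - \tau_\xi| \leq \frac{1}{\c-\s}\Bigl( |x-y| + \Norm{\gamma-\xi}_{L^\infty(0,t)} \Bigr).
\]
This is the central estimate and the genuine obstacle; everything after it is routine bookkeeping.

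With the delay bound in hand, for \eqref{GammaGamma} I would write $\Gamma_{t,x}[\gamma] - \Gamma_{t,y}[\xi] = \gamma(t-\tau_\gamma) - \xi(t-\tau_\xi)$ and split via $\xi(t-\tau_\gamma)$, getting $\Norm{\gamma-\xi}_{L^\infty(0,t)} + \s|\tau_\gamma-\tau_\xi|$; substituting the delay estimate and collecting terms produces the factor $\tfrac{\c}{\c-\s}$ and the $\s\c^{-1}|x-y|$ contribution stated in \eqref{GammaGamma}. For \eqref{PiPi} the argument is the same in spirit but now uses $\Pi_{t,x}[\gamma] = \dot\gamma(t-\tau_\gamma)$: I would split $\dot\gamma(t-\tau_\gamma) - \dot\xi(t-\tau_\xi)$ through $\dot\xi(t-\tau_\gamma)$, bound the first difference by $\Norm{\dot\gamma-\dot\xi}_{L^\infty(0,t)}$ and the second by $2\s|\tau_\gamma-\tau_\xi|$ using the $2\s$-Lipschitz continuity of $\dot\xi$ on $[0,T]$ guaranteed by the definition \eqref{def:OmegasT} of $\Omega_\s^T$. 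Substituting the delay estimate one final time gives the coefficient $\tfrac{2\s}{\c-\s}$ on $\Norm{\gamma-\xi}_{L^\infty(0,t)} + |x-y|$, completing \eqref{PiPi}. The only subtlety to watch is that the evaluation times $t-\tau_\gamma$ and $t-\tau_\xi$ may be negative, but since $\gamma$ and $\xi$ agree in regularity on all of $(-\infty,T]$ and the relevant Lipschitz bounds hold there, the $L^\infty(0,t)$ norms suffice to control the arguments wherever they land.
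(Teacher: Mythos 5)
Your proposal is correct and follows essentially the same route as the paper's proof: the same splitting via the intermediate points $\xi(t-\tau_{t,x}[\gamma])$ and $\dot\xi(t-\tau_{t,x}[\gamma])$, the $\s$- and $2\s$-Lipschitz bounds, and the delay equations with an absorption argument using $\s<\c$. The only (cosmetic) difference is that you absorb the delay difference first and then substitute the resulting bound into both estimates, whereas the paper absorbs the curve difference to get \eqref{GammaGamma} directly and extracts the delay bound afterwards for \eqref{PiPi}.
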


\begin{proof}
Fix $t\in [0,T]$.
By definition of $\Gamma_{t,x}$, we have
\[
   \left|\Gamma_{t,x}[\gamma] - \Gamma_{t,y}[\xi] \right| =
      \left| \gamma(t-\tau_{t,x}[\gamma]) - \xi(t-\tau_{t,y}[\xi]) \right|,
\]
with
\(  \label{taugammaxi}
   \c\tau_{t,x}[\gamma] = \bigl| x - \gamma(t-\tau_{t,x}[\gamma] \bigr|, \qquad
   \c\tau_{t,y}[\xi] = \bigl| y - \xi(t-\tau_{t,y}[\xi] \bigr|.
\)
The triangle inequality gives 
\[
   \bigl| \gamma(t-\tau_{t,x}[\gamma]) - \xi(t-\tau_{t,y}[\xi]) \bigr| &\leq&
     \bigl| \gamma(t-\tau_{t,x}[\gamma])  - \xi(t-\tau_{t,x}[\gamma]) \bigr| 
     +  \bigl| \xi(t-\tau_{t,x}[\gamma])  - \xi(t-\tau_{t,y}[\xi]) \bigr|   \\
     &\leq&
     \Norm{ \gamma - \xi }_{L^\infty(0,t)} + \s \bigl| \tau_{t,x}[\gamma] - \tau_{t,y}[\xi] \bigr|,
\]
where we used the $\s$-Lipschitz continuity of $\xi$ for the last inequality.
Now, with \eqref{taugammaxi} we have
\[   
    \bigl| \tau_{t,x}[\gamma] - \tau_{t,y}[\xi] \bigr|
         &=& \c^{-1} \bigl| |\gamma(t-\tau_{t,x}[\gamma]) - x| - |\xi(t-\tau_{t,y}[\xi]) - y| \bigr|  \\
         &\leq& \c^{-1} \Bigl( |x-y| + \bigl| \gamma(t-\tau_{t,x}[\gamma]) - \xi(t-\tau_{t,y}[\xi]) \bigr| \Bigr),
\]
so that
\[
   \bigl| \gamma(t-\tau_{t,x}[\gamma]) - \xi(t-\tau_{t,y}[\xi]) \bigr| &\leq&
      \Norm{ \gamma - \xi }_{L^\infty(0,t)} + \s\c^{-1} \Bigl( |x-y| + \bigl| \gamma(t-\tau_{t,x}[\gamma]) - \xi(t-\tau_{t,y}[\xi]) \bigr| \Bigr),
\]
which immediately gives
\[
   \bigl| \gamma(t-\tau_{t,x}[\gamma]) - \xi(t-\tau_{t,y}[\xi]) \bigr| \leq
       \frac{\c}{\c-\s} \Bigl( \Norm{ \gamma - \xi }_{L^\infty(0,t)} + \s\c^{-1} |x-y|  \Bigr).
\]
This proves \eqref{GammaGamma}.

Moreover,
\[
   \left|\Pi_{t,x}[\gamma] - \Pi_{t,y}[\xi] \right| &=&
      \left| \dot\gamma(t-\tau_{t,x}[\gamma]) - \dot\xi(t-\tau_{t,y}[\xi]) \right|    \\
    &\leq&
      \left| \dot\gamma(t-\tau_{t,x}[\gamma]) - \dot\xi(t-\tau_{t,x}[\gamma]) \right| 
        +  \left| \dot\xi(t-\tau_{t,x}[\gamma]) - \dot\xi(t-\tau_{t,y}[\xi]) \right|    \\
    &\leq&
        \Norm{ \dot\gamma - \dot\xi }_{L^\infty(0,t)} + 
           2\s \bigl| \tau_{t,x}[\gamma] - \tau_{t,y}[\xi] \bigr|,
\]
where we used the fact that $\dot\xi$ is $2\s$-Lipschitz continuous on $[0,T]$.
Combining the above estimates, we readily obtain
\[
   \bigl| \tau_{t,x}[\gamma] - \tau_{t,y}[\xi] \bigr| \leq \frac{1}{\c-\s} \bigl( \Norm{ \gamma - \xi }_{L^\infty(0,t)} + |x-y| \bigr),
\]
which gives \eqref{PiPi}.
\end{proof}

\begin{lemma}\label{lem:LipschZ}
For any $T>0$ there exists a constant $L_Z^T>0$ such that, for any $\nu\in\POsT$,
\(  \label{LipschZ}
   \Norm{(Z[\nu](\eta_0) - Z[\nu](\xi_0)}_{\Omega_\s^T} \leq L_Z^T  \Norm{\eta_0-\xi_0}_{\Omega_\s^0}
   \qquad\mbox{for all }\eta_0, \xi_0 \in \Omega_\s^0.
\)
The value of the constant $L_Z^T$ is explicitly calculable and depends only on $\c$, $\s$, $T$ and the Lipschitz constant $L_\psi$
of the influence function $\psi$.
\end{lemma}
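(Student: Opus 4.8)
The plan is to reduce the estimate to a Gronwall argument for the position and velocity differences of the two solution curves on $[0,T]$, the essential analytic input being the Lipschitz dependence of the operator $F_t[\nu]$ on its spatial and velocity arguments. Write $\eta := Z[\nu](\eta_0)$ and $\xi := Z[\nu](\xi_0)$. By construction $\eta\equiv\eta_0$ and $\xi\equiv\xi_0$ on $(-\infty,0]$, and by Lemma \ref{lem:Z} both curves lie in $\Omega_\s^T$, so in particular $|\dot\eta|,|\dot\xi|\leq\s$ on $[0,T]$. On the half-line $(-\infty,0]$ the difference equals $\eta_0-\xi_0$, contributing exactly $\Norm{\eta_0-\xi_0}_{L^\infty(-\infty,0)}$ to the $L^\infty$-part of $\Norm{\eta-\xi}_{\Omega_\s^T}$ and nothing to the derivative part (whose norm is taken only over $(0,T)$). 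Thus everything reduces to controlling the positions $x:=\eta$, $y:=\xi$ and velocities $u:=\dot\eta$, $w:=\dot\xi$ on $[0,T]$, where both pairs solve $\dot x=u$, $\dot u=F_t[\nu](x,u)$ with the \emph{same} measure $\nu$.

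The first step is to show that $F_t[\nu]$ is Lipschitz in $(x,v)$, uniformly in $t\in[0,T]$ and in $\nu$. Applying Lemma \ref{lem:gamma-gamma} with a fixed curve but different base points yields $\left|\Gamma_{t,x}[\gamma]-\Gamma_{t,y}[\gamma]\right|\leq\frac{\s}{\c-\s}|x-y|$ and $\left|\Pi_{t,x}[\gamma]-\Pi_{t,y}[\gamma]\right|\leq\frac{2\s}{\c-\s}|x-y|$. Splitting the integrand of $F_t[\nu](x,u)-F_t[\nu](y,w)$ as a difference of products $\psi(\cdot)(\Pi-v)$, estimating the $\psi$-factor by its Lipschitz constant $L_\psi$ together with the bound $\left|\Pi_{t,x}[\gamma]-u\right|\leq2\s$, and using $\psi\leq1$ with the probability normalisation of $\nu$, I obtain
\[
   \left| F_t[\nu](x,u)-F_t[\nu](y,w) \right| &\leq& \frac{2\s}{\c-\s}\left(\c L_\psi+1\right)|x-y| + |u-w|,
\]
a bound whose constant depends only on $\c$, $\s$ and $L_\psi$, and in particular not on $T$ or $\nu$.

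The second step is the Gronwall estimate. Setting $p(t):=|x(t)-y(t)|$ and $q(t):=|u(t)-w(t)|$, the equations give $\tot{}{t}p\leq q$ and $\tot{}{t}q\leq C_1 p+q$ for almost every $t$, with $C_1:=\frac{2\s}{\c-\s}(\c L_\psi+1)$. Hence $\phi:=p+q$ obeys $\dot\phi\leq C_2\phi$ with $C_2:=\max\{C_1,2\}$, so $\phi(t)\leq\phi(0)\,e^{C_2 T}$ on $[0,T]$. Taking suprema, both $\sup_{[0,T]}p$ and $\Norm{\dot\eta-\dot\xi}_{L^\infty(0,T)}=\sup_{(0,T)}q$ are controlled by $\phi(0)\,e^{C_2 T}$; combining with the $(-\infty,0]$ contribution yields \eqref{LipschZ} with an explicit constant of the form $L_Z^T=1+2e^{C_2 T}$, depending only on $\c$, $\s$, $T$ and $L_\psi$.

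The main obstacle is the initial value $\phi(0)=|\eta_0(0)-\xi_0(0)|+|\dot\eta_0(0)-\dot\xi_0(0)|$. The position part is immediately dominated by $\Norm{\eta_0-\xi_0}_{L^\infty(-\infty,0)}$, but the velocity part $|\dot\eta_0(0)-\dot\xi_0(0)|$ enters $\Norm{\dot\eta-\dot\xi}_{L^\infty(0,T)}$ through the value at $t=0^+$ and is \emph{not} controlled by the $L^\infty$-difference of the curves alone. The delicate point is therefore to ensure that $\Norm{\eta_0-\xi_0}_{\Omega_\s^0}$ dominates this initial-velocity discrepancy; this is where the $C^1_b$-structure of $\Omega_\s^0$ must be exploited, interpreting the $\Omega_\s^0$-norm so as to control the boundary derivative datum $\dot\gamma^0(0)$ (equivalently, treating $\gamma^0\mapsto\dot\gamma^0(0)$ as part of the prescribed data). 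Once this control is granted, the remaining computations are exactly the routine product-splitting and Gronwall estimates sketched above.
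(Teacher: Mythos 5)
Your proposal follows the same route as the paper's own proof: the reduction to estimating $\Norm{v-w}_{L^\infty(0,T)}$ after splitting off the $(-\infty,0]$ contribution, the Lipschitz bound $\left|F_t[\nu](x,u)-F_t[\nu](y,w)\right|\leq \frac{2\s}{\c-\s}\left(\c L_\psi+1\right)|x-y|+|u-w|$ obtained by applying Lemma \ref{lem:gamma-gamma} with the same curve at two base points (the paper derives exactly this constant inline), and the closing Gronwall argument are all identical up to bookkeeping.

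The substantive point is your final paragraph, and there you have caught something real. Since $\Norm{\gamma}_{\Omega_\s^0}=\Norm{\gamma}_{L^\infty(-\infty,0)}$ carries no derivative information, the initial velocity gap $|\dot\eta_0(0)-\dot\xi_0(0)|$ is indeed not controlled by $\Norm{\eta_0-\xi_0}_{\Omega_\s^0}$. The paper's proof does not overcome this obstacle; it sidesteps it by starting the Gronwall argument from $|v(t)-w(t)|\leq\int_0^t\left|F_s[\nu](x(s),v(s))-F_s[\nu](y(s),w(s))\right|\d s$, an inequality valid only when $v(0)=w(0)$, i.e.\ it silently discards exactly the term you flag (note that the analogous step in Lemma \ref{lem:Zgronwall} is legitimate, because there the two curves share the same initial datum $\gamma^0$ and hence the same initial velocity). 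Moreover, the missing term cannot be recovered under the stated hypotheses: take $\xi_0\equiv0$ and $\eta_0(t)=\eps\s\sin(t/\eps)\,e_1$, both elements of $\Omega_\s^0$ starting at the same point; then $\Norm{\eta_0-\xi_0}_{\Omega_\s^0}=\eps\s$ is arbitrarily small, while by continuity of the velocities $\limsup_{t\to0+}|v(t)-w(t)|=|\dot\eta_0(0)-\dot\xi_0(0)|=\s$, so the left-hand side of \eqref{LipschZ} is at least $\s$ and no constant $L_Z^T$ can work. The statement therefore needs a stronger norm on $\Omega_\s^0$ --- e.g.\ adding $|\dot\gamma(0)|$ or $\Norm{\dot\gamma}_{L^\infty(-\infty,0)}$, which is your suggested fix --- and this strengthening must then be propagated into the cost defining $\mathcal{W}_0$, hence into Lemma \ref{lem:LipschZZ} and Theorem \ref{thm:stability}. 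With such a norm both your argument and the paper's go through verbatim; so the step you single out as delicate is not a defect of your proof but a genuine gap in the paper's.
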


\begin{proof}
Let us fix $\nu\in\POsT$ and $\eta_0$, $\xi_0\in \Omega_\s^0$
and denote $x:=Z[\nu](\eta_0)|_{[0,T]}$, $v:=\dot x$, and $y:=Z[\nu](\xi_0)|_{[0,T]}$, $w:=\dot y$.
Then we have, by the definition \eqref{def:norm} of $\Norm{\cdot}_{\Omega_\s^T}$,
\[
   \Norm{(Z[\nu](\eta_0) - Z[\nu](\xi_0)}_{\Omega_\s^T}  =
      \Norm{\eta_0-\xi_0}_{L^\infty(-\infty,0)} + \Norm{x-y}_{L^\infty(0,T)} + \Norm{v-w}_{L^\infty(0,T)}.
\]
With
\[
   \Norm{x-y}_{L^\infty(0,T)} &\leq& |x(0) - y(0)| +  T\Norm{v-w}_{L^\infty(0,T)}   \\
      &\leq& \Norm{\eta_0-\xi_0}_{\Omega_\s^0} +  T\Norm{v-w}_{L^\infty(0,T)},
\]
we have
\(    \label{goingback}
    \Norm{(Z[\nu](\eta_0) - Z[\nu](\xi_0)}_{\Omega_\s^T} \leq 2 \Norm{\eta_0-\xi_0}_{\Omega_\s^0} + (1+T) \Norm{v-w}_{L^\infty(0,T)}.
\)
We thus need to estimate the term $\Norm{v-w}_{L^\infty(0,T)}$.
For any $t\in (0,T)$ we calculate, using \eqref{eq:MF2},
\[
   |v(t) - w(t)| &\leq& \int_0^t |F_s[\nu](x(s),v(s)) - F_s[\nu](y(s),w(s))| \, \d s \\
     &\leq&
        \int_0^t 
         \int_{\Omega^T_\s} \left| \psi\left( \left|\Gamma_{s,x(s)}[\gamma] - x(s) \right|\right) \left(\Pi_{s,x(s)}[\gamma] - v(s) \right)
         \right. \\
         && \qquad\qquad\qquad \left.
         -  \psi\left( \left|\Gamma_{s,y(s)}[\gamma] - y(s) \right|\right) \left(\Pi_{s,y(s)}[\gamma] - w(s) \right) \right|  \d\nu(\gamma) \d s.
\]
We estimate the integrand as
\(
   \left| \psi\left( \left|\Gamma_{s,x(s)}[\gamma] - x(s) \right|\right) \left(\Pi_{s,x(s)}[\gamma] - v(s) \right)
         -  \psi\left( \left|\Gamma_{s,y(s)}[\gamma] - y(s) \right|\right) \left(\Pi_{s,y(s)}[\gamma] - w(s) \right) \right|   \nonumber \\
      \leq
         \left| \psi\left( \left|\Gamma_{s,x(s)}[\gamma] - x(s) \right|\right) - \psi\left( \left|\Gamma_{s,y(s)}[\gamma] - y(s) \right|\right) \right|
              \left| \Pi_{s,x(s)}[\gamma] - v(s) \right|   \label{est:ZZZ} \\
           +   \left| \Pi_{s,x(s)}[\gamma] - \Pi_{s,y(s)}[\gamma] \right| + |v(s) - w(s)|,   \nonumber
\)
where we used the universal bound $\psi\leq 1$.
For the first term of the right hand side we apply the bound $|v(s)|\leq\s$ provided by Lemma \ref{lem:Z}
and the bound $|\Pi_{s,x(s)}[\gamma]| \leq \s$ implied by the fact that $\gamma\in\Omega_\s^T$, so that we have
\[
    \left| \psi\left( \left|\Gamma_{s,x(s)}[\gamma] - x(s) \right|\right) - \psi\left( \left|\Gamma_{s,y(s)}[\gamma] - y(s) \right|\right) \right|
              \left| \Pi_{s,x(s)}[\gamma] - v(s) \right| \\
              \leq 
              2\s L_\psi \left( \left| \Gamma_{s,x(s)}[\gamma] - \Gamma_{s,y(s)}[\gamma] \right| + |x(s)-y(s)| \right),   
\]
where $L_\psi$ is the Lipschitz constant of the influence function $\psi$.
The estimate \eqref{GammaGamma} of Lemma \ref{lem:gamma-gamma} gives
\[
   \left| \Gamma_{s,x(s)}[\gamma] - \Gamma_{s,y(s)}[\gamma] \right| \leq \frac{\s}{\c-\s} |x(s) - y(s)|,
\]
se that the first term of the right hand side in \eqref{est:ZZZ} is bounded by
\[
    \frac{2\s\c L_\psi}{\c-\s} |x(s)-y(s)|.
\]
For the second term of the right-hand side of \eqref{est:ZZZ} we apply the estimate \eqref{PiPi} of Lemma \ref{lem:gamma-gamma},
\[
   \left| \Pi_{s,x(s)}[\gamma] - \Pi_{s,y(s)}[\gamma] \right| \leq \frac{2\s}{\c-\s} |x(s)-y(s)|.
\]
Combining the above estimates, we arrive at
\[
   |v(t) - w(t)| &\leq& \int_0^t \frac{2\s}{\c-\s} \left( \c L_\psi + 1 \right) |x(s)-y(s)| + |v(s) - w(s)| \d s  \\
      &\leq& \frac{2\s T}{\c-\s} \left( \c L_\psi + 1 \right) \left( |x(0)-y(0)| + \int_0^t |v(s)-w(s)| \d s \right) + \int_0^t |v(s) - w(s)| \d s  \\
      &\leq& \frac{2\s T}{\c-\s}\left( \c L_\psi + 1 \right) |x(0)-y(0)| + \left( \frac{2\s T}{\c-\s} \left( \c L_\psi + 1 \right) + 1 \right) \int_0^t |v(s)-w(s)|,
\]
for all $t\in (0,T)$, where we used the estimate
\[
   \int_0^t |x(s)-y(s)| \d s &\leq& T |x(0)-y(0)| + \int_0^t \int_0^s |v(\sigma)-w(\sigma)| \d\sigma \d s \\
     &\leq& T |x(0)-y(0)| + T \int_0^t |v(s)-w(s)| \d s. 
\]
Denoting
\[
   K_T := \frac{2\s T}{\c-\s}\left( \c L_\psi + 1 \right) \exp\left( \frac{2\s T^2}{\c-\s} \left( \c L_\psi + 1 \right) + T \right),
\]
an application of the Gronwall lemma on $(0,T)$ yields
\[
   \Norm{v-w}_{L^\infty(0,T)} \leq K_T |x(0)-y(0)| \leq K_T \Norm{\eta_0-\xi_0}_{\Omega_\s^0}.
\]
Recalling \eqref{goingback}, we finally obtain
\[
   \Norm{(Z[\nu](\eta_0) - Z[\nu](\xi_0)}_{\Omega_\s^T} \leq \bigl[ 2 + (1+T) K_T \bigr]  \Norm{\eta_0-\xi_0}_{\Omega_\s^0},
\]
so that the claim \eqref{LipschZ} follows with the choice $L_Z^T := 2 + (1+T) K_T$.
Obviously, the constant $L_Z^T$ does not depend on the choice of $\nu\in\POsT$.
\end{proof}

\begin{lemma}\label{lem:LipschZZ}
Fix $T>0$. Then for any $\nu\in\POsT$ we have
\(  \label{LipschZZ}
   \mathcal{W}_T(Z[\nu]\#\rho^0, Z(\nu)\#\mu^0) \leq L_Z^T \mathcal{W}_0(\rho^0, \mu^0)
   \qquad\mbox{for all }\rho^0, \mu^0 \in \POsZ,
\)
where $L_Z^T$ is the constant provided by Lemma \ref{lem:LipschZ}.
\end{lemma}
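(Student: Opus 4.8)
The plan is to derive \eqref{LipschZZ} as a direct consequence of the global Lipschitz property of the solution map established in Lemma \ref{lem:LipschZ}, using the elementary principle that push-forward by a Lipschitz map contracts the Monge--Kantorovich--Rubinstein distance by at most the Lipschitz constant. Throughout I would fix $\nu\in\POsT$ and abbreviate $\Xi:=Z[\nu]$, which by Lemma \ref{lem:Z} maps $\Omega_\s^0$ into $\Omega_\s^T$ and by Lemma \ref{lem:LipschZ} satisfies $\Norm{\Xi(\eta_0)-\Xi(\xi_0)}_{\Omega_\s^T}\leq L_Z^T\Norm{\eta_0-\xi_0}_{\Omega_\s^0}$ for all $\eta_0,\xi_0\in\Omega_\s^0$.

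First I would lift a transference plan from the initial data to the solutions. Let $\pi^0\in\Lambda(\rho^0,\mu^0)$ be an arbitrary transference plan on $\Omega_\s^0\times\Omega_\s^0$ with marginals $\rho^0$ and $\mu^0$, and consider the product map $\Xi\times\Xi:\Omega_\s^0\times\Omega_\s^0\to\Omega_\s^T\times\Omega_\s^T$ defined by $(\eta_0,\xi_0)\mapsto(\Xi(\eta_0),\Xi(\xi_0))$, which is measurable by continuity of $\Xi$. Setting $\pi:=(\Xi\times\Xi)\#\pi^0$, a direct computation of marginals shows $\pi\in\Lambda(\Xi\#\rho^0,\Xi\#\mu^0)$: the first marginal of $\pi$ is the push-forward under $\Xi$ of the first marginal of $\pi^0$, hence $\Xi\#\rho^0$, and analogously the second marginal is $\Xi\#\mu^0$. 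I would also note in passing that $\Xi\#\rho^0,\Xi\#\mu^0\in\POsT$ (so that $\mathcal{W}_T$ is meaningful), since $\Norm{\Xi(\gamma_0)}_{\Omega_\s^T}\leq L_Z^T\Norm{\gamma_0}_{\Omega_\s^0}+\Norm{\Xi(0)}_{\Omega_\s^T}$ is $\rho^0$-integrable by the finite first moment of $\rho^0\in\POsZ$.

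Using $\pi$ as a competitor in the definition \eqref{def:MKR} of $\mathcal{W}_T$, applying the change-of-variables formula for push-forwards, and then invoking the Lipschitz bound \eqref{LipschZ}, I obtain
\[
   \mathcal{W}_T(\Xi\#\rho^0,\Xi\#\mu^0)
      &\leq& \iint_{\Omega_\s^0\times\Omega_\s^0} \Norm{\Xi(\eta_0)-\Xi(\xi_0)}_{\Omega_\s^T}\,\d\pi^0(\eta_0,\xi_0) \\
      &\leq& L_Z^T \iint_{\Omega_\s^0\times\Omega_\s^0} \Norm{\eta_0-\xi_0}_{\Omega_\s^0}\,\d\pi^0(\eta_0,\xi_0).
\]
Taking the infimum over all $\pi^0\in\Lambda(\rho^0,\mu^0)$ on the right-hand side then yields $\mathcal{W}_T(\Xi\#\rho^0,\Xi\#\mu^0)\leq L_Z^T\,\mathcal{W}_0(\rho^0,\mu^0)$, which is precisely \eqref{LipschZZ} (with $L_Z^T$ independent of $\nu$, as already recorded in Lemma \ref{lem:LipschZ}). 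Since the argument is nothing more than the contraction of the Wasserstein-type distance under a Lipschitz map, there is no genuine obstacle here; the only steps deserving a line of care are the verification of the marginals of $\pi$ and the well-definedness of the push-forward measures in $\POsT$, both of which are routine given Lemmas \ref{lem:Z} and \ref{lem:LipschZ}.
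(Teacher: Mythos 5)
Your proposal is correct and takes essentially the same route as the paper: the paper's proof simply defers to \cite[Lemma 3.13]{CCR}, whose argument is precisely this standard one — lift a transference plan $\pi^0\in\Lambda(\rho^0,\mu^0)$ through the map $Z[\nu]\times Z[\nu]$, use it as a competitor in \eqref{def:MKR}, apply the Lipschitz bound \eqref{LipschZ} of Lemma \ref{lem:LipschZ}, and take the infimum. Your additional checks (measurability of $Z[\nu]$ via its Lipschitz continuity, the marginals of the pushed-forward plan, and the finite first moment of $Z[\nu]\#\rho^0$) are exactly the routine verifications the cited adaptation requires, so there is no gap.
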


\begin{proof}
A straightforward adaptation of the proof of \cite[Lemma 3.13]{CCR},
using the Lipschitz continuity result \eqref{LipschZ} of Lemma \ref{lem:LipschZ}.
\end{proof}

\begin{lemma} \label{lem:Wdist}
For any pair of Borel measurable mappings $Y_1, Y_2: \Omega_\s^0 \to \Omega_\s^T$ and $\mu^0\in \POsZ$, we have
\(
   \mathcal{W}_T(Y_1\#\mu^0, Y_2\#\mu^0) \leq
        \sup_{\gamma^0\in \Omega_\s^0} \Norm{Y_1(\gamma^0)-Y_2(\gamma^0)}_{\Omega_\s^T}.
\)
\end{lemma}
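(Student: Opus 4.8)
The plan is to exploit that $\mathcal{W}_T$ is defined as an \emph{infimum} over transference plans in \eqref{def:MKR}: to bound it from above it suffices to exhibit a single admissible coupling of $Y_1\#\mu^0$ and $Y_2\#\mu^0$ whose transport cost does not exceed the right-hand side. The natural candidate is the coupling induced by the common source variable $\gamma^0$, i.e.\ the one carried by the diagonal image of $\mu^0$ under the pair $(Y_1,Y_2)$.

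Concretely, I would introduce the joint map
\[
   \Theta := (Y_1, Y_2) : \Omega_\s^0 \to \Omega_\s^T \times \Omega_\s^T, \qquad
   \gamma^0 \mapsto \bigl( Y_1(\gamma^0), Y_2(\gamma^0) \bigr),
\]
which is Borel measurable because both of its components are, and set $\pi := \Theta\#\mu^0 \in \mathcal{P}(\Omega_\s^T \times \Omega_\s^T)$. The key verification is that $\pi$ is admissible, i.e.\ $\pi\in\Lambda(Y_1\#\mu^0, Y_2\#\mu^0)$. Denoting by $\mathrm{pr}_1, \mathrm{pr}_2$ the two coordinate projections on $\Omega_\s^T \times \Omega_\s^T$, one has $\mathrm{pr}_k \circ \Theta = Y_k$, whence by the composition rule for push-forwards $\mathrm{pr}_k\#\pi = (\mathrm{pr}_k\circ\Theta)\#\mu^0 = Y_k\#\mu^0$ for $k=1,2$. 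Thus $\pi$ has exactly the prescribed marginals and is a legitimate transference plan.

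With $\pi$ in hand, I would evaluate its cost by the change-of-variables formula for push-forward measures:
\begin{align*}
   \iint_{\Omega_\s^T \times \Omega_\s^T} \Norm{\gamma - \xi}_{\Omega_\s^T} \, \d\pi(\gamma,\xi)
      &= \int_{\Omega_\s^0} \Norm{Y_1(\gamma^0) - Y_2(\gamma^0)}_{\Omega_\s^T} \, \d\mu^0(\gamma^0) \\
      &\leq \sup_{\gamma^0 \in \Omega_\s^0} \Norm{Y_1(\gamma^0) - Y_2(\gamma^0)}_{\Omega_\s^T},
\end{align*}
where the last inequality uses that $\mu^0$ is a probability measure. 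Since $\mathcal{W}_T(Y_1\#\mu^0, Y_2\#\mu^0)$ is by definition the infimum of the left-hand integral over all plans in $\Lambda(Y_1\#\mu^0, Y_2\#\mu^0)$, and $\pi$ is one such plan, the claimed bound follows at once.

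There is essentially no analytic difficulty here; the statement is the standard fact that push-forward by a pair of maps produces an admissible coupling. The only points requiring a word of care are measure-theoretic bookkeeping: the joint Borel measurability of $\Theta$ (immediate from that of $Y_1,Y_2$), the marginal identity for $\pi$, and the change-of-variables step. If the supremum on the right-hand side is $+\infty$ the inequality is vacuous, so one may assume it finite; then the cost integral is finite, and together with $\mu^0\in\POsZ$ this is exactly what guarantees that $Y_1\#\mu^0, Y_2\#\mu^0$ have finite first moment, so that $\mathcal{W}_T$ is well defined between them. The main (very mild) obstacle is thus simply ensuring that $\pi$ genuinely has the correct marginals, which the projection identities above settle directly.
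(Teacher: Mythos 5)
Your proof is correct and is essentially the same argument the paper relies on: the paper disposes of this lemma simply by citing \cite[Lemma 3.11]{CCR}, and the proof of that cited lemma is precisely your construction, namely taking the coupling $\pi = (Y_1,Y_2)\#\mu^0$, checking its marginals via the projection identities, and applying the change-of-variables formula together with the fact that $\mu^0$ is a probability measure. The only adaptation needed in the present setting is replacing $\R^d$ and the Euclidean distance by $\Omega_\s^T$ and $\Norm{\cdot}_{\Omega_\s^T}$, which your write-up handles correctly.
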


\begin{proof}
See the proof of \cite[Lemma 3.11]{CCR}.
\end{proof}

\begin{lemma}\label{lem:Zgronwall}
There exists a continuous positive function $r:\R^+\to\R^+$ such that for any $\mu^0\in \POsZ$
and any $\rho$, $\nu\in\POsT$,
\(   \label{Zgronwall}
   \mathcal{W}_T(Z[\rho]\#\mu^0, Z(\nu)\#\mu^0) \leq r(T) \mathcal{W}_T(\rho, \nu).
\)
The function $r=r(T)$ is explicitly calculable and depends only on the values of $\s$, $\c$
and the Lipschitz constant $L_\psi$ of the influence function $\psi$.
Moreover, it has the property
\(  \label{lim-r}
   \lim_{T\to 0} r(T) = 0.
\)
\end{lemma}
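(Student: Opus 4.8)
The plan is to reduce the measure-stability of $Z$ to a pointwise (in the initial curve) trajectory estimate, and then to control the dependence of the force operator $F_t[\cdot]$ on its \emph{measure} argument through the Monge--Kantorovich structure of $\mathcal{W}_T$. First, by Lemma~\ref{lem:Wdist} applied to the mappings $Y_1 := Z[\rho]$ and $Y_2 := Z[\nu]$, it suffices to bound
\begin{equation*}
   \sup_{\gamma^0\in\Omega_\s^0} \Norm{Z[\rho](\gamma^0) - Z[\nu](\gamma^0)}_{\Omega_\s^T}
\end{equation*}
by $r(T)\,\mathcal{W}_T(\rho,\nu)$ with $r$ independent of $\mu^0$. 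Fixing $\gamma^0\in\Omega_\s^0$ and writing $x:=Z[\rho](\gamma^0)$, $v:=\dot x$, $y:=Z[\nu](\gamma^0)$, $w:=\dot y$, both trajectories share the same initial datum, so $x\equiv y$ on $(-\infty,0]$ and $x(0)=y(0)$, $v(0)=w(0)$. Consequently the $\Omega_\s^T$-norm of the difference collapses to $\Norm{x-y}_{L^\infty(0,T)} + \Norm{v-w}_{L^\infty(0,T)}$, and since $\Norm{x-y}_{L^\infty(0,T)} \leq T\Norm{v-w}_{L^\infty(0,T)}$, everything reduces to estimating $\Norm{v-w}_{L^\infty(0,T)}$.

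From \eqref{eq:MF2} and $v(0)=w(0)$ we have $v(t)-w(t) = \int_0^t \bigl(F_s[\rho](x(s),v(s)) - F_s[\nu](y(s),w(s))\bigr)\d s$. I would split the integrand as
\begin{equation*}
   \bigl(F_s[\rho](x,v)-F_s[\rho](y,w)\bigr) + \bigl(F_s[\rho](y,w)-F_s[\nu](y,w)\bigr).
\end{equation*}
The first bracket keeps the measure $\rho$ fixed and varies only the state; it is estimated exactly as in the proof of Lemma~\ref{lem:LipschZ}, yielding a bound of the form $\frac{2\s}{\c-\s}(\c L_\psi+1)\,|x(s)-y(s)| + |v(s)-w(s)|$ via Lemma~\ref{lem:gamma-gamma} and the velocity bounds $|v|,|w|\le\s$ from Lemma~\ref{lem:Z}. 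The second bracket, with the evaluation point $(y,w)$ held fixed, is the genuinely new term, measuring the sensitivity of $F$ to its measure argument.

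The crux is therefore to bound $\bigl|F_s[\rho](y,w)-F_s[\nu](y,w)\bigr|$ by $\mathcal{W}_T(\rho,\nu)$. Writing $G_{s,y,w}(\gamma) := \psi(|\Gamma_{s,y}[\gamma]-y|)\,(\Pi_{s,y}[\gamma]-w)$, this difference equals $\int G_{s,y,w}\,\d\rho - \int G_{s,y,w}\,\d\nu$. I would show that $\gamma\mapsto G_{s,y,w}(\gamma)$ is Lipschitz with respect to $\Norm{\cdot}_{\Omega_\s^T}$, uniformly in $s\in[0,T]$, $y\in\R^d$ and $|w|\le\s$: splitting $G_{s,y,w}(\gamma)-G_{s,y,w}(\xi)$ into a $\psi$-increment times $(\Pi_{s,y}[\gamma]-w)$ plus a $\psi$-factor times $(\Pi_{s,y}[\gamma]-\Pi_{s,y}[\xi])$, the Lipschitz continuity of $\psi$ together with the estimates \eqref{GammaGamma}--\eqref{PiPi} of Lemma~\ref{lem:gamma-gamma} (taken at coinciding base points, so the $|x-y|$ terms vanish) and the bounds $|\Pi_{s,y}[\gamma]|\le\s$, $|w|\le\s$ give a Lipschitz constant $L_G$ depending only on $\s$, $\c$ and $L_\psi$. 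Then, for any transference plan $\pi\in\Lambda(\rho,\nu)$,
\begin{equation*}
   \Bigl| \int G_{s,y,w}\,\d\rho - \int G_{s,y,w}\,\d\nu \Bigr| = \Bigl| \iint \bigl(G_{s,y,w}(\gamma)-G_{s,y,w}(\xi)\bigr)\,\d\pi \Bigr| \le L_G \iint \Norm{\gamma-\xi}_{\Omega_\s^T}\,\d\pi,
\end{equation*}
and minimizing over $\pi$ yields $\le L_G\,\mathcal{W}_T(\rho,\nu)$.

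Assembling the two contributions and using $x(0)=y(0)$ so that $|x(s)-y(s)|\le\int_0^s|v-w|$, I obtain
\begin{equation*}
   |v(t)-w(t)| \le T L_G\,\mathcal{W}_T(\rho,\nu) + \Bigl(\tfrac{2\s}{\c-\s}(\c L_\psi+1)T + 1\Bigr)\int_0^t |v(s)-w(s)|\,\d s,
\end{equation*}
and the Gronwall lemma on $(0,T)$ bounds $\Norm{v-w}_{L^\infty(0,T)}$ by $TL_G\,\mathcal{W}_T(\rho,\nu)$ times an explicit exponential in $T$. Tracing back through $\Norm{Z[\rho](\gamma^0)-Z[\nu](\gamma^0)}_{\Omega_\s^T}\le(1+T)\Norm{v-w}_{L^\infty(0,T)}$ and Lemma~\ref{lem:Wdist}, the claim follows with $r(T):=(1+T)TL_G\exp\bigl((\tfrac{2\s}{\c-\s}(\c L_\psi+1)T+1)T\bigr)$, which is continuous, positive, depends only on $\s$, $\c$, $L_\psi$, and satisfies $\lim_{T\to0}r(T)=0$ thanks to the explicit factor $T$ in the forcing term. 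I expect the main obstacle to be precisely this measure-sensitivity step: establishing the uniform $\Omega_\s^T$-Lipschitz continuity of the integrand $G_{s,y,w}$ and recognizing that the Monge--Kantorovich coupling converts it into the factor $\mathcal{W}_T(\rho,\nu)$; the decisive quantitative point is that, because the initial data coincide, the forcing carries a factor $T$, which is exactly what forces $r(T)\to0$.
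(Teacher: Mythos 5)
Your proof is correct, and it reaches the lemma by a genuinely different decomposition of the key term. The paper treats $F_s[\rho](x(s),v(s))-F_s[\nu](y(s),w(s))$ in one stroke: it fixes an \emph{optimal} transference plan $\pi$ between $\rho$ and $\nu$ (citing Villani for its existence), rewrites the difference of the two integrals as a single integral against $\pi$, and estimates the resulting integrand, in which the curves $(\gamma,\xi)$ \emph{and} the base states $(x,v)$, $(y,w)$ vary simultaneously, using Lemma \ref{lem:gamma-gamma} in its full mixed form; this yields the bound $(1+T)\bar r(T)\int_0^T\mathcal{W}_s(\rho,\nu)\,\d s$, coarsened at the end via monotonicity of $s\mapsto\mathcal{W}_s$. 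You instead insert the intermediate term $F_s[\rho](y(s),w(s))$: the state-variation bracket is exactly the computation already carried out in the proof of Lemma \ref{lem:LipschZ} (same measure, different states), while the measure-variation bracket is reduced to showing that $\gamma\mapsto\psi(|\Gamma_{s,y}[\gamma]-y|)(\Pi_{s,y}[\gamma]-w)$ is Lipschitz for $\Norm{\cdot}_{\Omega_\s^T}$, uniformly in $s$, $y$ and $|w|\le\s$, after which integration against an \emph{arbitrary} plan and infimization produce the factor $\mathcal{W}_T(\rho,\nu)$. The two routes rest on the same ingredients (Lemmas \ref{lem:Wdist}, \ref{lem:Z}, \ref{lem:gamma-gamma}, Gronwall) and end with the same constants: your $L_G=\max\bigl\{1,\tfrac{2\s}{\c-\s}(\c L_\psi+1)\bigr\}$ is precisely the prefactor in the paper's $\bar r$. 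What your splitting buys is modularity --- the sensitivity of $F$ in its measure argument is isolated as a clean Lipschitz-in-measure statement, Lemma \ref{lem:LipschZ} is reused rather than re-derived, and no existence of an optimal plan is needed, since the quantity being bounded is plan-independent and you minimize only at the end (this also sidesteps the paper's slightly loose identification of $\iint\Norm{\gamma-\xi}_{\Omega_\s^s}\d\pi$ with $\mathcal{W}_s(\rho,\nu)$ for the $T$-optimal plan). What the paper's joint coupling buys is a single self-contained integrand estimate and the marginally finer intermediate bound involving $\mathcal{W}_s$ under the time integral; since that refinement is discarded in the last step, the resulting functions $r(T)$ are equivalent, and both satisfy \eqref{lim-r} for the same reason: the forcing term carries an explicit factor $T$ because the two characteristics share the same initial curve.
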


\begin{proof}
Let us fix $\rho$, $\nu\in \POsT$
and apply Lemma \ref{lem:Wdist},
\[
   \mathcal{W}_T(Z[\rho]\#\mu^0, Z[\nu]\#\mu^0)
       \leq  \sup_{\gamma^0\in \Omega_\s^0} \Norm{(Z[\rho](\gamma^0) - Z[\nu](\gamma^0)}_{\Omega_\s^T}.
\]
Let us now fix $\gamma^0\in \Omega_\s^0$  
and denote $x:=Z[\rho](\gamma^0)|_{[0,T]}$, $v:=\dot x$ and $y:=Z[\nu](\gamma^0)|_{[0,T]}$, $w:=\dot y$.
Then, according to the definition \eqref{def:norm} of the norm $\Norm{\cdot}_{\Omega_\s^T}$, we have
\[
   \Norm{(Z[\rho](\gamma^0) - Z[\nu](\gamma^0)}_{\Omega_\s^T} =
       \Norm{x-y}_{L^\infty(0,T)} + \Norm{v-w}_{L^\infty(0,T)},
\]
where we used the fact that, by definition, $Z[\rho](\gamma^0)$ and $Z[\nu](\gamma^0)$
coincide on $(-\infty,0]$.
Moreover, with $|x(t)-y(t)| \leq \int_0^t |\dot x(s) - \dot y(s)| \d s = \int_0^t |v(s) - w(s)| \d s$, we have
\[
   \Norm{(Z[\rho](\gamma^0) - Z[\nu](\gamma^0)}_{\Omega_\s^T} \leq (1+T) \Norm{v-w}_{L^\infty(0,T)}.
\]
Let us fix $t\in (0,T)$ and calculate, using \eqref{eq:MF2},
\[
   |v(t) - w(t)| &\leq& \int_0^t |F_s[\rho](x(s),v(s)) - F_s[\nu](y(s),w(s))| \, \d s \\
     &=&
       \int_0^t
       \left|
         \int_{\Omega^s_\s} \psi\left( \left|\Gamma_{s,x(s)}[\gamma] - x(s) \right|\right) \left(\Pi_{s,x(s)}[\gamma] - v(s) \right)  \d\rho(\gamma)
         \right. \\
      && \qquad\qquad\qquad \left.
       -  \int_{\Omega^s_\s} \psi\left( \left|\Gamma_{s,y(s)}[\xi] - y(s) \right|\right) \left(\Pi_{s,y(s)}[\xi] - w(s) \right)  \d\nu(\xi)
       \right| \d s.
\]
Let $\pi$ be an optimal transference plan between the measures $\rho$ and $\nu$
(for the proof of existence of the optimal transference plan, see, e.g., \cite[Theorem 4.1]{old-new}).
Then, using the fact that $\pi$ has marginals $\rho$ and $\nu$, we have
\[
      && \int_{\Omega^s_\s} \psi\left( \left|\Gamma_{s,x(s)}[\gamma] - x(s) \right|\right) \left(\Pi_{s,x(s)}[\gamma] - v(s) \right)  \d\rho(\gamma)  \\
      && \qquad
       -  \int_{\Omega^s_\s} \psi\left( \left|\Gamma_{s,y(s)}[\xi] - y(s) \right|\right) \left(\Pi_{s,y(s)}[\xi] - w(s) \right)  \d\nu(\xi) \\
       &=& \iint_{\Omega^s_\s \times \Omega^s_\s}
          \Bigl[ \psi\left( \left|\Gamma_{s,x(s)}[\gamma] - x(s) \right|\right) \left(\Pi_{s,x(s)}[\gamma] - v(s) \right)
          \Bigr. \\
        && \qquad\qquad\qquad \Bigl.
          - \psi\left( \left|\Gamma_{s,y(s)}[\xi] - y(s) \right|\right) \left(\Pi_{s,y(s)}[\xi] - w(s) \right)
            \Bigr] \d\pi(\gamma,\xi).
\]
We write the integrand as
\(
   \label{integrand:MF}
   \begin{aligned}
   & \Bigl[ \psi\left( \left|\Gamma_{s,x(s)}[\gamma] - x(s) \right|\right) - \psi\left( \left|\Gamma_{s,y(s)}[\xi] - y(s) \right|\right) \Bigr]
      \left(\Pi_{s,x(s)}[\gamma] - v(s) \right) \\
      &+   \psi\left( \left|\Gamma_{s,y(s)}[\xi] - y(s) \right|\right)
      \Bigl[  \left(\Pi_{s,x(s)}[\gamma] - v(s) \right) - \left(\Pi_{s,y(s)}[\xi] - w(s) \right) \Bigr].
      \end{aligned}
\)
The first line in \eqref{integrand:MF} is estimated from above by
\[
   && \Bigl| \psi\left( \left|\Gamma_{s,x(s)}[\gamma] - x(s) \right|\right) - \psi\left( \left|\Gamma_{s,y(s)}[\xi] - y(s) \right|\right) \Bigr|
      \left|\Pi_{s,x(s)}[\gamma] - v(s) \right| \\
      &\leq&
      2\s L_\psi \Bigl| \left|\Gamma_{s,x(s)}[\gamma] - x(s) \right| - \left|\Gamma_{s,y(s)}[\xi] - y(s) \right| \Bigr| \\
      &\leq&
      2\s L_\psi \Bigl( \left|\Gamma_{s,x(s)}[\gamma] - \Gamma_{s,y(s)}[\xi] \right| + \left|x(s) - y(s) \right|  \Bigr) \\
      &\leq&
      2\s L_\psi \left[ \frac{\c}{\c-\s} \Bigl( \Norm{ \gamma - \xi }_{L^\infty(0,s)} + \s\c^{-1} |x(s)-y(s)|  \Bigr) + \left|x(s) - y(s) \right| \right]  \\
      &=&
      \frac{2\s\c L_\psi}{\c-\s} \left[ \Norm{ \gamma - \xi }_{L^\infty(0,s)} + \left|x(s) - y(s) \right| \right],
\]
where we used the Lipschitz continuity of the influence function $\psi$
with constant $L_\psi$ and the estimate $\left|\Pi_{s,x(s)}[\gamma] - v(s) \right| \leq 2\s$ for the first inequality,
and estimate \eqref{GammaGamma} of Lemma \ref{lem:gamma-gamma} for the last inequality.

The second line in \eqref{integrand:MF} is estimated by
\[
   &&  \psi\left( \left|\Gamma_{s,y(s)}[\xi] - y(s) \right|\right)
      \Bigl|  \left(\Pi_{s,x(s)}[\gamma] - v(s) \right) - \left(\Pi_{s,y(s)}[\xi] - w(s) \right) \Bigr|  \\
    &\leq&
        |v(s)-w(s)|  +  \Norm{ \dot\gamma - \dot\xi }_{L^\infty(0,s)}  +
          \frac{2\s}{\c-\s} \Bigl( \Norm{ \gamma - \xi }_{L^\infty(0,s)} + |x(s)-y(s)|  \Bigr),
\]
where we used the universal bound $\psi\leq 1$ and the estimate \eqref{PiPi} of Lemma \ref{lem:gamma-gamma}.

Combining the above estimates, we arrive at
\[
   |v(t)-w(t)| &\leq& \int_0^t \iint_{\Omega^s_\s \times \Omega^s_\s}
            \left[ \frac{2\s}{\c-\s}\left( \c L_\psi + 1\right) \Norm{ \gamma - \xi }_{L^\infty(0,s)} + \Norm{\dot\gamma - \dot\xi }_{L^\infty(0,s)} \right]
              \d\pi(\gamma,\xi) \d s + \\
      && \qquad\qquad\qquad +\, \int_0^t |v(s)-w(s)| + \frac{2\s}{\c-\s}\left( \c L_\psi + 1 \right) |x(s)-y(s)| \d s.
\]
Recalling that $x(0)=y(0)$, we have
\[
   \int_0^t |x(s)-y(s)| \d s \leq \int_0^t \int_0^s |v(\sigma)-w(\sigma)| \d\sigma \d s \leq  t \int_0^t |v(s)-w(s)| \d s,
\]
which gives
\[
   |v(t)-w(t)| &\leq&  \max\left\{ 1, \frac{2\s}{\c-\s}\left( \c L_\psi + 1\right) \right\} \int_0^t 
        \iint_{\Omega^s_\s \times \Omega^s_\s} \Norm{\gamma-\xi}_{\Omega_\s^s} \d\pi(\gamma,\xi) \d s \\
        &&\qquad\qquad\qquad +
          \left( \frac{2\s t}{\c-\s}\left( \c L_\psi + 1 \right) + 1 \right) \int_0^t |v(s)-w(s)| \d s.
\]
Since $\pi$ is an optimal transference plan, we have $\iint_{\Omega^s_\s \times \Omega^s_\s} \Norm{\gamma-\xi}_{\Omega_\s^s} \d\pi(\gamma,\xi) = \mathcal{W}_s(\rho,\nu)$,
so that
\[
   |v(t)-w(t)| \leq  \max\left\{ 1, \frac{2\s}{\c-\s}\left( \c L_\psi + 1\right) \right\} \int_0^t  \mathcal{W}_s(\rho,\nu) \d s
      + \left( \frac{2\s t}{\c-\s}\left( \c L_\psi + 1 \right) + 1 \right) \int_0^t |v(s)-w(s)| \d s.
\]
An application of the Gronwall lemma gives then
\[
   |v(t)-w(t)| \leq \bar r(t) \int_0^t \mathcal{W}_s(\rho,\nu) \d s
\]
with
\(   \label{barr}
   \bar r(t) := \max\left\{ 1, \frac{2\s}{\c-\s}\left( \c L_\psi + 1\right) \right\}  \exp\left(\frac{2\s t}{\c-\s}\left( \c L_\psi + 1 \right) + 1 \right).
\)
We finally conclude that
\( 
   \Norm{(Z[\rho](\gamma^0) - Z[\nu](\gamma^0)}_{\Omega_\s^T} &\leq& (1+T) \Norm{v-w}_{L^\infty(0,T)}  \nonumber \\
       &\leq& (1+T) \bar r(T) \int_0^T \mathcal{W}_s(\rho,\nu) \d s \label{ZZZZ} \\
       &\leq& (1+T) T \bar r(T) \mathcal{W}_T(\rho,\nu),  \nonumber
\)
where for the last inequality we used the fact that, by definition, $\mathcal{W}_s(\rho,\nu)$
is a nondecreasing function of $s\in[0,T]$.
Consequently, \eqref{Zgronwall} is verified with $r(T):= (1+T) T \bar r(T)$,
and \eqref{lim-r} holds since $\bar r(T)$ is bounded as $T\to 0$.
\end{proof}

\subsection{Existence and uniqueness of solutions}

For a fixed initial datum $\rho^0\in \POsZ$ we define the set $\POsTI \subset \POsT$,
\(  \label{def:POsTI}
   \POsTI := \left\{ \rho\in \POsT;\; \mathbb{I}\#\rho = \rho^0 \right\}.
\)
The solution $\rho=\mbox{law}(x)$ of the mean-field problem \eqref{eq:MF1}--\eqref{eq:MF2}
shall be constructed as the unique fixed point of the mapping
$\ZZ[\rho^0]: \POsTI \to \POsTI$,
\(  \label{def:ZZ}
    \ZZ[\rho^0](\rho) := Z[\rho]\#\rho^0,
\)
with the mapping $Z: \Omega^0_\s \to \Omega^T_\s$ defined in \eqref{def:Z}.

\begin{lemma}\label{lem:ZZ}
Fix $\rho^0\in \POsZ$.
For small enough $T>0$, the mapping $\ZZ[\rho^0]$ given by \eqref{def:ZZ} is a contraction on $\POsTI$ in the topology
induced by the Monge-Kantorowich-Rubinstein distance $\mathcal{W}_T$ defined in \eqref{def:MKR}.
\end{lemma}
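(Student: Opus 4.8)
The plan is to reduce the statement to the Gronwall-type estimate of Lemma~\ref{lem:Zgronwall}, which already carries out all the analytical work. First I would check that $\ZZ[\rho^0]$ is a well-defined self-map of $\POsTI$. By Lemma~\ref{lem:Z} the mapping $Z[\rho]$ sends $\Omega_\s^0$ into $\Omega_\s^T$, so $Z[\rho]\#\rho^0$ is a probability measure on $\Omega_\s^T$; its measurability follows from the Lipschitz continuity established in Lemma~\ref{lem:LipschZ}. Its finite first moment is a consequence of the bound
$$
   \Norm{Z[\rho](\gamma^0)}_{\Omega_\s^T} \leq \Norm{\gamma^0}_{\Omega_\s^0} + \s(1+T),
$$
which follows from the velocity bound $|\dot x|\leq\s$ proved in Lemma~\ref{lem:Z} together with the fact that $Z[\rho](\gamma^0)$ coincides with $\gamma^0$ on $(-\infty,0]$. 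The essential point is that the constraint defining $\POsTI$ is preserved: since $Z[\rho](\gamma^0)|_{(-\infty,0]}=\gamma^0$, we have $\mathbb{I}\circ Z[\rho]=\mathrm{id}$ on $\Omega_\s^0$, whence $\mathbb{I}\#(Z[\rho]\#\rho^0)=\rho^0$ and therefore $\ZZ[\rho^0](\rho)\in\POsTI$.

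Next I would establish contractivity. For $\rho,\nu\in\POsTI$ one has, by definition~\eqref{def:ZZ}, $\ZZ[\rho^0](\rho)=Z[\rho]\#\rho^0$ and $\ZZ[\rho^0](\nu)=Z[\nu]\#\rho^0$, so applying Lemma~\ref{lem:Zgronwall} with $\mu^0:=\rho^0$ yields immediately
$$
   \mathcal{W}_T\bigl(\ZZ[\rho^0](\rho),\ZZ[\rho^0](\nu)\bigr)
      = \mathcal{W}_T\bigl(Z[\rho]\#\rho^0, Z[\nu]\#\rho^0\bigr)
      \leq r(T)\,\mathcal{W}_T(\rho,\nu).
$$
Since $r$ is continuous with $\lim_{T\to 0}r(T)=0$, there exists $T^\ast>0$ such that $r(T)<1$ for every $0<T\leq T^\ast$; for such $T$ the contraction factor is strictly less than one and the claim follows.

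There is no genuine analytical obstacle at this stage, as the difficulty has been entirely front-loaded into Lemma~\ref{lem:Zgronwall}, which supplies both the Lipschitz dependence of $Z[\cdot]$ on the underlying law and, crucially, the vanishing of the modulus $r(T)$ as $T\to 0$. The only matters requiring care are of a bookkeeping nature: verifying that the push-forward construction respects the initial-data constraint $\mathbb{I}\#\rho=\rho^0$ (handled above via $\mathbb{I}\circ Z[\rho]=\mathrm{id}$), and noting, for the subsequent fixed-point argument, that $\POsTI$ is a closed subset of the complete metric space $(\POsT,\mathcal{W}_T)$ and hence itself complete. Both reduce to the single observation that $Z[\rho]$ never modifies a trajectory on $(-\infty,0]$.
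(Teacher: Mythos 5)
Your proposal is correct and follows essentially the same route as the paper's own proof: the self-map property via Lemma \ref{lem:Z} and contractivity via Lemma \ref{lem:Zgronwall}, followed by choosing $T$ small enough that $r(T)<1$. The only difference is that you spell out the bookkeeping (preservation of the constraint $\mathbb{I}\#\rho=\rho^0$, finite first moment, completeness of $\POsTI$) that the paper compresses into the word ``clearly,'' which is a welcome addition but not a different argument.
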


\begin{proof}
We apply the Banach contraction theorem on the complete metric space $(\POsTI, \mathcal{W}_T)$.
Clearly, by Lemma \ref{lem:Z}, $\ZZ[\rho^0]$ maps $\POsTI$ into itself.
From Lemma \ref{lem:Zgronwall} we have
\[
   \mathcal{W}_T(\ZZ[\rho^0](\rho), \ZZ[\rho^0](\nu)) = \mathcal{W}_T(Z[\rho]\#\rho^0, Z[\nu]\#\rho^0)
       \leq r(T) \mathcal{W}_T(\rho, \nu),
\]
where $r=r(T)$ is a continuous function with $\lim_{T\to 0} r(T) = 0$.
Consequently, $\ZZ[\rho^0]$ is a contraction on $\POsTI$ for small enough $T>0$.
\end{proof}

The solution $\rho=\ZZ[\rho^0](\rho)$ constructed in Lemma \ref{lem:ZZ}
for short times $T>0$ can be extended
as long as the characteristics given by \eqref{eq:MF1}--\eqref{eq:MF2}
remain $\s$-Lipschitz continuous with $2\s$-Lipschitz continuous derivative.
This property is guaranteed by Lemma \ref{lem:Z} for any $T>0$.
Consequently, the solutions are global in time, which concludes the proof of Theorem \ref{thm:MF}.

\subsection{Stability}

\begin{theorem}\label{thm:stability}
Fix $T>0$. Then there exists a constant $M_T>0$ such that
for any $\rho^0$, $\nu^0 \in \POsZ$,
\(  \label{stability}
   \mathcal{W}_T(\rho,\nu) \leq M_T \mathcal{W}_0(\rho^0, \nu^0).
\)
where $\rho=Z[\rho]\#\rho^0$ and $\nu=Z[\nu]\#\nu^0$ are the unique fixed points
of the mapping $\ZZ$ provided by Lemma \ref{lem:ZZ}.
\end{theorem}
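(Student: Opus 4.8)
The plan is to convert the two comparison estimates for the flow map $Z$ into a single Gronwall inequality, using the fixed-point identities $\rho = Z[\rho]\#\rho^0$ and $\nu = Z[\nu]\#\nu^0$ supplied by Lemma \ref{lem:ZZ}. Fix $T>0$ and set $\phi(t):=\mathcal{W}_t(\rho,\nu)$ for $t\in[0,T]$, where $\mathcal{W}_t$ is built from the truncated norm $\Norm{\cdot}_{\Omega_\s^t}$; recall that $\phi$ is nondecreasing. Inserting the intermediate measure $Z[\nu]\#\rho^0$ and using the triangle inequality for the Monge--Kantorovich--Rubinstein distance at level $t$, I would write
\[
   \phi(t) &\leq& \mathcal{W}_t(Z[\rho]\#\rho^0, Z[\nu]\#\rho^0) \\
      && +\; \mathcal{W}_t(Z[\nu]\#\rho^0, Z[\nu]\#\nu^0).
\]
The first term isolates the dependence on the driving measure, the second the dependence on the initial datum.

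For the second term I would invoke Lemma \ref{lem:LipschZZ}, together with the monotonicity $\mathcal{W}_t\leq\mathcal{W}_T$ of the distance in the time horizon, to bound it by $L_Z^T\,\mathcal{W}_0(\rho^0,\nu^0)$. For the first term I would apply Lemma \ref{lem:Wdist} with $Y_1:=Z[\rho]$, $Y_2:=Z[\nu]$, $\mu^0:=\rho^0$, and then feed in the intermediate estimate \eqref{ZZZZ} from the proof of Lemma \ref{lem:Zgronwall}, read on the subinterval $[0,t]$ rather than $[0,T]$:
\[
   \mathcal{W}_t(Z[\rho]\#\rho^0, Z[\nu]\#\rho^0) &\leq& \sup_{\gamma^0\in\Omega_\s^0} \Norm{Z[\rho](\gamma^0)-Z[\nu](\gamma^0)}_{\Omega_\s^t} \\
      &\leq& (1+t)\,\bar r(t) \int_0^t \mathcal{W}_s(\rho,\nu)\,\d s.
\]
Since the prefactor $(1+t)\bar r(t)$ is nondecreasing in $t$, it may be frozen at $(1+T)\bar r(T)$ on $[0,T]$, and the two bounds combine into the closed integral inequality
\[
   \phi(t) &\leq& (1+T)\,\bar r(T) \int_0^t \phi(s)\,\d s + L_Z^T\, \mathcal{W}_0(\rho^0,\nu^0), \qquad t\in[0,T].
\]

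Because $\phi$ is nondecreasing, hence measurable and bounded on $[0,T]$, the integral form of the Gronwall lemma applies with the constant source term $L_Z^T\,\mathcal{W}_0(\rho^0,\nu^0)$ and yields
\[
   \mathcal{W}_T(\rho,\nu) = \phi(T) &\leq& L_Z^T\, e^{(1+T)\bar r(T)\,T}\, \mathcal{W}_0(\rho^0,\nu^0),
\]
which is exactly \eqref{stability} with the explicit constant $M_T:=L_Z^T\exp\!\bigl((1+T)\bar r(T)\,T\bigr)$.

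The step I expect to be the crux is precisely the choice between the crude and the refined estimate of the first term. If one used Lemma \ref{lem:Zgronwall} verbatim, this term would be bounded by $r(t)\,\mathcal{W}_t(\rho,\nu)=r(t)\phi(t)$, and absorbing it on the left would require $r(T)<1$; since $r(T)\to0$ only as $T\to0$ and grows (exponentially) with $T$, this closes the estimate for short times only and would force a concatenation of the bound over a partition of $[0,T]$, with the attendant bookkeeping of restriction and re-initialisation of the trajectory measures. Retaining instead the integral $\int_0^t\mathcal{W}_s(\rho,\nu)\,\d s$ and appealing to Gronwall sidesteps this entirely and produces a single stability constant valid for all $T>0$. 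The only remaining routine points are the monotonicities $\mathcal{W}_t\leq\mathcal{W}_T$ and $t\mapsto(1+t)\bar r(t)$ nondecreasing, both immediate from the definitions and the explicit form \eqref{barr} of $\bar r$.
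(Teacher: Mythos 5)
Your proposal is correct and follows essentially the same route as the paper's proof: the same triangle-inequality splitting via the intermediate measure $Z[\nu]\#\rho^0$, the same use of Lemma \ref{lem:Wdist} combined with the intermediate estimate \eqref{ZZZZ} (rather than the final statement of Lemma \ref{lem:Zgronwall}, whose absorption would indeed only work for small $T$), Lemma \ref{lem:LipschZZ} for the initial-datum term, and a concluding Gronwall argument. The only cosmetic difference is that you freeze the constants at their values at $T$, while the paper keeps $(1+t)\bar r(t)$ and $L_Z^t$ and merely notes their boundedness on compact intervals.
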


\begin{proof}
Using the triangle inequality, we have for any $t\in (0,T)$,
\[
   \mathcal{W}_t(\rho, \nu) &=& \mathcal{W}_t(Z[\rho]\#\rho^0, Z(\nu)\#\nu^0) \\
    &\leq&   
       \mathcal{W}_t(Z[\rho]\#\rho^0, Z(\nu)\#\rho^0) + \mathcal{W}_t(Z[\nu]\#\rho^0, Z(\nu)\#\nu^0).
\]
The first term of the right-hand side is estimated using Lemma \ref{lem:Wdist} and \eqref{ZZZZ},
\[
   \mathcal{W}_t(Z[\rho]\#\rho^0, Z[\nu]\#\rho^0)
       &\leq&  \sup_{\gamma^0\in \Omega_\s^0} \Norm{(Z[\rho](\gamma^0) - Z[\nu](\gamma^0)}_{\Omega_\s^t} \\
      &\leq&  (1+t) \bar r(t) \int_0^t \mathcal{W}_s(\rho,\nu) \d s
\]
with $\bar r(t)$ defined in \eqref{barr}.
For the second term we use the estimate \eqref{LipschZZ} of Lemma \ref{lem:LipschZZ}.
We thus arrive at
\[
   \mathcal{W}_t(\rho, \nu) \leq (1+t) \bar r(t) \int_0^t \mathcal{W}_s(\rho,\nu) \d s +  L_Z^t \mathcal{W}_0(\rho^0, \nu^0),
\]
where $L_Z^t$ is the constant provided by Lemma \ref{lem:LipschZ}.
We conclude by an application of the Gronwall lemma for $t\in [0,T]$, noting that both $(1+t)\bar r(t)$ and $L_Z^t$
are bounded on compact intervals.
\end{proof}

\subsection{Remark about Fokker-Planck description}\label{subsec:FP}

A natural question to ask is whether one can express the mean-field limit of the system
\eqref{eq:tau}--\eqref{eq:CS2} in terms of a Fokker-Planck equation
for some time dependent phase-space particle density $g_t \in \mathcal{P}(\R^d\times\R^d)$, $t\geq 0$.
An obvious candidate for such particle density is the push-forward measure $g_t := (X_t,V_t)\#\rho$,
where the maps $X(t)$ and $V(t)$ were defined in \eqref{XtVt},
and $\rho\in\POsT$ is a solution of the mean-field problem constructed in Theorem \ref{thm:MF}.
Then, taking any test function $\varphi=\varphi(x,v)$ and applying formally
the change-of-coordinates formula for the push-forward $g_t := (X_t,V_t)\#\rho$, we have
\[
   \tot{}{t} \iint_{\R^d\times\R^d} \varphi(x,v) \, \d g_t(x,v) &=& \tot{}{t} \int_{\Omega_\s^T} \varphi(X_t[\gamma], V_t[\gamma]) \, \d\rho(\gamma) \\
      &=&  \int_{\Omega_\s^T} \grad_x\varphi\cdot \dot X_t[\gamma] + \grad_v\varphi\cdot \dot V_t[\gamma] \,\d\rho(\gamma) \\
      &=&  \int_{\Omega_\s^T} \grad_x\varphi\cdot  V_t[\gamma] + \grad_v\varphi\cdot F_t[\rho](X_t[\gamma], V_t[\gamma]) \, \d\rho(\gamma),
\]
where we used the characteristic system \eqref{eq:MF1}--\eqref{eq:MF2} in the last equality.
Now, if we were able to express $F_t[\rho]$ in terms of $g$ as $G_t[g]$,
we could reverse the change-of-coordinates formula to obtain
\[
      \int_{\Omega_\s^T} \grad_x\varphi\cdot  V_t[\gamma]  + \grad_v\varphi\cdot F_t[\rho](X_t[\gamma], V_t[\gamma]) \, \d\rho(\gamma) 
      = \iint_{\R^d\times\R^d} \grad_x\varphi\cdot  v + \grad_v\varphi\cdot G_t[g](x,v) \, \d g_t(x,v).
\]
Then we could formally describe the mean-field limit by the Fokker-Planck equation
\(   \label{eq:FP}
   \partial_t g_t + v\cdot\grad_x g_t + \grad_v\cdot (G_t[g] g_t) = 0.
\)
Note that with \eqref{def:F} we have
\[
   F_t[\rho](x,v) &=&
      \int_{\Omega_\s^T} \psi\left( \left|\Gamma_{t,x}[\gamma] - x \right|\right) \left(\Pi_{t,x}[\gamma] - v \right)  \d\rho(\gamma) \\
      &=&
      \int_{\Omega_\s^T} \psi\left( \left|\gamma(t-\tau_{t,x}[\gamma])- x \right|\right) \left(\dot \gamma(t-\tau_{t,x}[\gamma]) - v \right)  \d\rho(\gamma) \\
      &=&
      \int_{\Omega_\s^T} \psi\left( \left|X_{t-\tau_{t,x}[\gamma]}[\gamma]- x \right|\right) \left(V_{t-\tau_{t,x}[\gamma]}[\gamma] - v \right)  \d\rho(\gamma),
\]
where $\tau_{t,x}[\gamma]:=\tau$ is the unique solution of \eqref{eq:tau_gamma}, i.e.,
\[
   \c\tau = | x - X_{t-\tau}[\gamma]|.
\]
However, the change-of-variables formula corresponding
to the push-forward $g_t = (X_t,V_t)\#\rho$ cannot be applied here since the evaluation times
in $X_{t-\tau_{t,x}[\gamma]}$ and $V_{t-\tau_{t,x}[\gamma]}$
depend on $\gamma$. Consequently, the operator $F_t[\rho]$ does not seem
to admit an equivalent form in terms of the push-forward measure $g_t = (X_t,V_t)\#\rho$.
In particular, one may be tempted to believe that the mean-field limit of \eqref{eq:tau}--\eqref{eq:CS2} 
should be given by \eqref{eq:FP} with the operator $G_t[f]$ taking the form
\[
   G_t[g](x,v) = \iint_{\R^d\times\R^d} \psi(|y-x|) (w-v) \, g(t-\c^{-1}|x-y|, y, w) \, \d y \,\d w.
\]
However, apart from the obvious difficulties with giving a meaning to the expression $g(t-\c^{-1}|x-y|, y, w) \, \d y \,\d w$
for measure-valued $g$, the above argument indicates that such an intuitive expectation is wrong.
We therefore conclude that the mean-field limit does not admit a description in terms of the (classical) Fokker-Planck equation \eqref{eq:FP},
and one has indeed to resort to the formulation with probability measures on the space of time-dependent trajectories,
as we did in this paper.

\section*{Acknowledgment}
The author acknowledges the support of the KAUST baseline funds.
He also acknowledges the fruitful discussions with Oliver Tse that have taken place during his visit of TU Eindhoven,
and with Jan Vyb\'\i ral during his visit of Czech Technical University in Prague,
which helped to initiate and develop some ideas presented in this paper.


\end{document}